\newtheorem{thm}{Theorem}[section]
\newtheorem{prob}{Problem}[section]
\newtheorem{claim}{Claim}
\newtheorem{lem}{Lemma}[section]
\newtheorem{cor}{Corollary}[section]
\newtheorem{fact}{Fact}[section]
\theoremstyle{definition}
\begin{document}
\title{Sufficient conditions for $k$-factors and spanning trees of graphs\footnote{Supported by National Natural Science Foundation of China
{(Nos. 11971445, 12371361 and 12171440)}.}}
\author{{\bf Guoyan Ao$^{a, b}$}, {\bf Ruifang Liu$^{a}$}\thanks{Corresponding author.
E-mail addresses: aoguoyan@163.com (G. Ao), rfliu@zzu.edu.cn (R. Liu), yuanjj@zzu.edu.cn (J. Yuan), daniel.ng@polyu.edu.hk (C.T. Ng), edwin.cheng@polyu.edu.hk (T.C.E. Cheng).},
{\bf Jinjiang Yuan$^{a}$}, {\bf C.T. Ng$^{c}$}, {\bf T.C.E. Cheng$^{c}$}\\
{\footnotesize $^a$ School of Mathematics and Statistics, Zhengzhou University, Zhengzhou, Henan 450001, China} \\
{\footnotesize $^b$ School of Mathematics and Physics, Hulunbuir University, Hailar, Inner Mongolia 021008, China}\\
{\footnotesize $^c$ Logistics Research Centre, Department of Logistics and Maritime Studies, }\\
{\footnotesize The Hong Kong Polytechnic University, Hong Kong SAR, People's Republic of China}}
\date{}

\date{}
\maketitle
{\flushleft\large\bf Abstract}
For any integer $k\geq1,$ a graph $G$ has a $k$-factor if it contains a $k$-regular spanning subgraph.
In this paper we prove a sufficient condition in terms of the number of $r$-cliques to guarantee the existence of a $k$-factor in a graph
with minimum degree at least $\delta$, which improves the sufficient condition of O \cite{O2021} based on the number of edges.
For any integer $k\geq2,$ a spanning $k$-tree of a connected graph $G$ is a spanning tree in which every vertex has degree at most $k$.
Motivated by the technique of Li and Ning \cite{Li2016}, we present a tight spectral condition for an $m$-connected graph to have a spanning $k$-tree,
which extends the result of Fan, Goryainov, Huang and Lin \cite{Fan2021} from $m=1$ to general $m$.
Let $T$ be a spanning tree of a connected graph. The leaf degree of $T$ is the maximum number of leaves adjacent to $v$ in $T$ for any $v\in V(T)$.
We provide a tight spectral condition for the existence of a spanning tree with leaf degree at most $k$
in a connected graph with minimum degree $\delta$, where $k\geq1$ is an integer.

\begin{flushleft}
\textbf{Keywords:} $k$-factor; Spanning tree; Leaf degree; Spectral radius; Minimum degree

\end{flushleft}
\textbf{AMS Classification:} 05C50; 05C35

\section{Introduction}

Let $G$ be a graph with vertex set $V(G)$ and edge set $E(G)$. The order and size of $G$ are denoted by $|V(G)|=n$ and $|E(G)|=e(G)$, respectively.
We denote by $d_{G}(v)$, $\delta(G)$, $\omega(G)$ and $i(G)$ the degree of a vertex $v$ in $G$, the minimum degree, the clique number and
the number of isolated vertices of $G,$ respectively. The number of cliques of size $r$ in $G$ is denoted by $N_{r}(G)$.
We use $K_{n}$, $I_{n}$, and $R(n,t)$ to denote the complete graph of order $n$, the complement of $K_{n}$, and an $t$-regular graph with $n$ vertices.
For a vertex subset $S$ of $G$, we denote by $G-S$ and $G[S]$ the subgraph of $G$ obtained from $G$ by deleting the vertices in $S$ together with their incident edges and the subgraph of $G$ induced by $S$, respectively.
For two disjoint vertex subsets $X,Y \subset V(G)$, we denote by $G[X,Y]$ the induced bipartite subgraph between $X$ and $Y$.
Let $G_{1}$ and $G_{2}$ be two vertex-disjoint graphs. We denote by $G_{1}+G_{2}$ the disjoint union of $G_{1}$ and $G_{2}.$
The join $G_{1}\vee G_{2}$ is the graph obtained from $G_{1}+G_{2}$ by adding all the possible edges between $V(G_1)$ and $V(G_2)$.
Let $A(G)$ be the adjacency matrix of $G$. The largest eigenvalue of $A(G)$, denoted by $\rho(G)$, is called the {\it spectral radius} of $G$.

Let $P$ be a property defined on all the graphs of order $n$, and let $l$ be a positive integer.
We say $P$ is {\it$l$-stable} if whenever $G+uv$ has the property $P$ with $d_{G}(u)+d_{G}(v)\geq l$, then $G$ itself has the property $P$.
The {\it $l$-closure} of a graph $G$ is the graph obtained from $G$ by successively joining pairs of nonadjacent vertices
whose degree sum is at least $l$ until no such pair exists. Denote by $C_{l}(G)$ the $l$-closure of $G.$

F\"{u}redi, Kostochka, and Luo\cite{Furedi2019} posed the following open problem.

\begin{prob}\label{prob1}
Prove a sufficient condition in terms of the number of $r$-cliques to guarantee a graph $G$ to have an $l$-stable property $P.$
\end{prob}

A {\it linear forest} is a forest whose connected components are all the paths and isolated vertices.
A graph $G$ is called {\it $k$-hamiltonian} if each linear forest $F$ with $k$ edges contained in $G$ can be extended to a Hamilton cycle of $G.$
In the same paper, F\"{u}redi, Kostochka and Luo answered Problem 1.1 to assure that $G$ is $k$-hamiltonian $(l=n+k).$
Moreover, they suggested some $l$-stable properties as candidates, e.g., $G$ contains a cycle $C_{k}~(l=2n-k)$,
$G$ contains a path $P_{k}~(l=n-1)$, $G$ contains a matching $kK_{2}~(l=2k-1)$, $G$ contains a $1$-factor $(l=n-1)$, $G$ contains a $k$-factor for $k\geq2~(l=n+2k-4),$
$G$ is $k$-connected $(l=n+k-2)$, and $G$ is $k$-wise hamiltonian, i.e., every $n-k$ vertices span a $C_{n-k}$~$(l=n+k-2)$.

The corresponding Problem \ref{prob1} for the property containing Hamilton cycles,
long cycles or long paths have been well studied in \cite{Furedi2018, Luo2018, Ning2020}.
Duan et al. \cite{Duan2020} answered Problem \ref{prob1} for graphs containing a matching $kK_{2}.$
Let $\mathcal{L}_{k}$ be the family of all the linear forests of
size $k$ without isolated vertices. Xue et al. \cite{Xue2022} answered Problem \ref{prob1} to assure that $G$ is $\mathcal{L}_{k}$-free $(l=k).$
In this paper we focus on Problem \ref{prob1} for graphs containing a $1$-factor and a $k$-factor for $k\geq2,$ respectively.

For any integer $k\geq 1,$ a {\it$k$-factor} of $G$ is a $k$-regular spanning subgraph.
It is well known that the complete graph $K_n$ has a $k$-factor when $nk$ is even.
Bondy and Chv\'{a}tal \cite{Bondy1976} presented a closure theorem to assure that a graph has a $k$-factor.

\begin{thm}[Bondy and Chv\'{a}tal \cite{Bondy1976}]\label{th1}
Let $G$ be a graph of order $n$ and let $1\leq k<n$ be an integer. Then\\
(i) $G$ has a $1$-factor if and only if $C_{n-1}(G)$ has a $1$-factor. \\
(ii) $G$ has a $k$-factor if and only if $C_{n+2k-4}(G)$ has a $k$-factor for $k\geq 2.$
\end{thm}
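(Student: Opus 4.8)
The plan is to prove both parts by the classical Bondy--Chv\'{a}tal closure technique. In each part the forward implication is trivial, since $G$ is a spanning subgraph of $C_{l}(G)$ and hence a $k$-factor of $G$ is already a $k$-factor of $C_{l}(G)$. For the converse it suffices to show that the property ``$G$ has a $k$-factor'' is $l$-stable, where $l=n-1$ when $k=1$ and $l=n+2k-4$ when $k\ge2$; the closure statement then follows by induction on the number of edges added in forming $C_{l}(G)$, each such edge joining a pair of nonadjacent vertices whose degree sum is at least $l$. So throughout I would fix two nonadjacent vertices $u,v$ of a graph $G$ with $d_{G}(u)+d_{G}(v)\ge l$ for which $G+uv$ has a $k$-factor, and try to produce a $k$-factor of $G$.

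For part (i) I would argue by contradiction using Tutte's $1$-factor theorem. If $G$ has no $1$-factor, there is a set $S\subseteq V(G)$ with $o(G-S)>|S|$, where $o(H)$ denotes the number of odd components of $H$; since $G+uv$ has a $1$-factor the order $n$ is even, and from $o(G-S)\equiv n-|S|\pmod{2}$ we get $o(G-S)\ge|S|+2$. On the other hand $o((G+uv)-S)\le|S|$, which forces $u,v\notin S$ (otherwise $(G+uv)-S=G-S$) and, because adding $uv$ to $G-S$ can decrease the odd-component count by $2$ only by merging two odd components, forces $u$ and $v$ to lie in distinct odd components $C_{u},C_{v}$ of $G-S$ with $o(G-S)=|S|+2$. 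Then $d_{G}(u)+d_{G}(v)\le 2|S|+(|C_{u}|-1)+(|C_{v}|-1)$, while summing the sizes of the $|S|+2$ odd components of $G-S$ gives $n\ge 2|S|+|C_{u}|+|C_{v}|$. Combining these, $d_{G}(u)+d_{G}(v)\le n-2<n-1$, a contradiction.

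For part (ii) the scheme is the same, but now one uses Tutte's $f$-factor theorem with $f\equiv k$: $G$ has a $k$-factor if and only if $kn$ is even and
\[
\eta_{G}(S,T):=k|S|-k|T|+\sum_{x\in T}d_{G-S}(x)-q_{G}(S,T)\ \ge\ 0
\]
for all disjoint $S,T\subseteq V(G)$, where $q_{G}(S,T)$ is the number of components $C$ of $G-S-T$ with $k|C|+e_{G}(C,T)$ odd ($e_{G}(C,T)$ being the number of edges between $C$ and $T$), and moreover $\eta_{G}(S,T)\equiv kn\pmod{2}$. If $G$ has no $k$-factor while $G+uv$ does, then $kn$ is even, so some disjoint pair has $\eta_{G}(S,T)\le-2$; I would fix such a pair $(S,T)$ with $|S|+|T|$ minimum. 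A short case analysis shows that inserting the edge $uv$ never decreases $\eta(S,T)$, and raises it by exactly $2$ only when $u,v\notin S$ together with one of the following: $u,v\in T$; $u,v$ lie in two distinct ``odd'' components of $G-S-T$; or one of $u,v$ lies in $T$ and the other in an ``odd'' component. Hence $\eta_{G}(S,T)=-2$ and one of these configurations occurs. In each case I would bound $d_{G}(u)+d_{G}(v)\le 2|S|+d_{G-S}(u)+d_{G-S}(v)$ and then use the identity $\eta_{G}(S,T)=-2$ together with the structural consequences of the minimality of $(S,T)$ --- in particular bounds on the $(G-S)$-degrees of the vertices of $T$ and on the orders of the components of $G-S-T$ meeting $u$ or $v$ --- to deduce $d_{G}(u)+d_{G}(v)\le n+2k-5$, contradicting $d_{G}(u)+d_{G}(v)\ge n+2k-4$.

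The main obstacle is exactly this final step for $k\ge2$: extracting from the minimal choice of $(S,T)$ enough structural information to make the degree count close at the bound $n+2k-4$, and, alongside it, keeping careful track of the parity quantity $q_{G}(S,T)$ under the insertion of a single edge, since a new edge can change which components of $G-S-T$ are counted. The $k=1$ case, where the extremal pair collapses to a single set $S$ and $q_{G}(S,\emptyset)$ is just $o(G-S)$, is the transparent prototype that motivates the general argument.
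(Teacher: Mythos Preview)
The paper does not prove this theorem at all: Theorem~\ref{th1} is quoted as a known result of Bondy and Chv\'{a}tal with a citation to \cite{Bondy1976}, and is used as a black box in the proof of Theorem~\ref{main1}. So there is no ``paper's own proof'' to compare your proposal against.

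That said, your approach is the right one and is essentially the original Bondy--Chv\'{a}tal argument. Part~(i) is complete and correct as written: the Tutte-set computation closes cleanly at $d_G(u)+d_G(v)\le n-2$. For part~(ii) you have the correct framework (Tutte's $k$-factor criterion and the parity of $\eta_G(S,T)$), and you have correctly isolated the three configurations in which inserting $uv$ can raise $\eta$ by~$2$. What remains --- and what you yourself flag as the obstacle --- is real work: one has to exploit the minimality of $(S,T)$ to show, for instance, that every vertex of $T$ has $d_{G-S}(\cdot)\le k-1$ (else move it to $S$ or delete it without increasing $\eta$), and then run a careful count in each of the three configurations to reach $d_G(u)+d_G(v)\le n+2k-5$. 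This case analysis is genuinely delicate and is precisely what Bondy and Chv\'{a}tal carry out in \cite{Bondy1976}; your outline does not yet supply it, so as a proof of~(ii) the proposal is incomplete, though the strategy is sound.
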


There are many sufficient conditions to assure that a graph contains a $k$-factor (see e.g.,
\cite{Fan2022, Kouider2004, Li1998, Li2003, O2022}). Inspired by the related work on Problem \ref{prob1},
we prove a sufficient condition in terms of the number of $r$-cliques to guarantee the existence of a $k$-factor
in a graph with minimum degree at least $\delta$.
Recall that $I_{n}=nK_{1}$.

Let $n, r, k$ and $q$ be integers. We define two functions as follows:
$$\varphi(n,r,q)={n-q-1\choose r}+(q+1){q\choose r-1},$$
$$\phi(n,r,k,q)={n-q+2k-4\choose r}+(q-2k+4){q\choose r-1}.$$

\begin{thm}\label{main1}
Let $r, k$ and $\delta$ be integers with $r\geq 2$ and $k\geq 1$. Suppose that $G$ is a graph of order $n\geq 2\delta+k+1$ with $nk$ even and minimum degree at least $\delta$. Then the following two statements hold.\\
(i) If $N_{r}(G)>{\rm max} \left\{\varphi(n,r,\delta+1), \varphi(n,r,\frac{n}{2}-1)\right\}$ with $1\leq \delta \leq \frac{n}{2}-1$,
then $G$ has a $1$-factor unless
$C_{n-1}(G)\cong K_{n-\delta-1}\cup K_{\delta+1}$ ($\delta$ is even) or $K_{\delta}\vee(K_{n-2\delta-1}+I_{\delta+1})$.\\
(ii) If $N_{r}(G)>{\rm max} \left\{\phi(n,r,k,\delta+1), \phi(n,r,k,\lfloor\frac{n+2k-5}{2}\rfloor)\right\}$
with $2k-2\leq \delta \leq \lfloor\frac{n+2k-5}{2}\rfloor$, then $G$ has a $k$-factor for $k\geq 2$.
\end{thm}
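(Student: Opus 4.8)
My plan is to combine the Bondy–Chvátal closure theorems (Theorem \ref{th1}) with an extremal-type counting argument on the number of $r$-cliques, following the general paradigm used for stability results on $l$-stable properties. I will treat parts (i) and (ii) in parallel, since having a $1$-factor is $(n-1)$-stable and having a $k$-factor for $k\ge 2$ is $(n+2k-4)$-stable. For part (i), set $l=n-1$; for part (ii), set $l=n+2k-4$. The first step is to pass to the $l$-closure $H:=C_l(G)$. By Theorem \ref{th1}, $G$ has the desired factor if and only if $H$ does, and crucially $N_r(H)\ge N_r(G)$ since the closure only adds edges; moreover $\delta(H)\ge\delta(G)\ge\delta$. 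So it suffices to show that an $l$-closed graph $H$ on $n$ vertices with $\delta(H)\ge\delta$ and $N_r(H)$ exceeding the stated bound must contain the factor (or be one of the listed exceptional closures).

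The second step is the structural heart of the argument. Assume for contradiction that $H$ is $l$-closed but has no $1$-factor (resp.\ $k$-factor). I will invoke the relevant deficiency version of Tutte's theorem—Tutte's $1$-factor theorem in case (i), and the Belck–Tutte $k$-factor theorem in case (ii)—to obtain a set $S\subseteq V(H)$ certifying the absence of the factor (in case (i), a set $S$ with more than $|S|$ odd components in $H-S$; in case (ii), the analogous $k$-factor barrier). Because $H$ is $l$-closed, any two nonadjacent vertices have degree sum at most $l-1$; applying this to vertices lying in different components of $H-S$, and to a vertex of small degree together with vertices outside its neighborhood, forces strong structure: essentially $H-S$ has at most two "large" pieces, the rest being forced small, and the degree condition pins down $|S|$ in terms of $\delta$ or $n/2$ (resp.\ $n/2+k-\frac52$). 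Writing $q:=|S|$, a short case analysis shows $H$ must be a subgraph of (indeed, by maximality of cliques under the count, essentially equal to) a graph of the form $K_q\vee(K_{n-q-\ast}+I_{\ast})$, whose number of $r$-cliques is exactly $\varphi(n,r,q)$ (resp.\ $\phi(n,r,k,q)$)—note the defining formulas $\binom{n-q-1}{r}+(q+1)\binom{q}{r-1}$ and $\binom{n-q+2k-4}{r}+(q-2k+4)\binom{q}{r-1}$ are precisely the clique counts of these extremal configurations.

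The third step is to show that, over the admissible range of $q$, the function $q\mapsto\varphi(n,r,q)$ (resp.\ $\phi(n,r,k,q)$) is maximized at one of the two endpoints of the interval, i.e.\ at $q=\delta+1$ or $q=\frac n2-1$ (resp.\ $q=\delta+1$ or $q=\lfloor\frac{n+2k-5}{2}\rfloor$). This is a convexity / discrete-second-difference computation: one checks that as a function of the integer $q$ the expression is convex (the binomial $\binom{n-q-1}{r}$ drops, the term $(q+1)\binom qr$ grows, and their sum has a single interior minimum), so its maximum on a bounded integer interval occurs at an endpoint. Combining with Step 2, if $N_r(G)$ strictly exceeds $\max\{\varphi(n,r,\delta+1),\varphi(n,r,\frac n2-1)\}$ then $H$ cannot be any of the barrier graphs, a contradiction—except in case (i) when equality forces $H$ to be exactly $K_{n-\delta-1}\cup K_{\delta+1}$ with $\delta$ even (an odd-component obstruction when $n-\delta-1$ and $\delta+1$ are both odd, which needs $n$ even and $\delta$ even) or $K_\delta\vee(K_{n-2\delta-1}+I_{\delta+1})$, the two advertised exceptions. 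The hypothesis $n\ge 2\delta+k+1$ is used to guarantee these configurations actually fit inside $n$ vertices and that the barrier set has the claimed size; the parity hypothesis $nk$ even is needed for $K_n$ itself to admit a $k$-factor and enters the Tutte-barrier parity bookkeeping.

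I expect Step 2—extracting the precise extremal structure from the combination of an $l$-closed graph and a Tutte barrier—to be the main obstacle. The closure condition and the Tutte condition each constrain the graph, but reconciling them to conclude that $H-S$ has essentially one big clique plus an independent set (rather than several medium components) requires careful use of the degree-sum bound across components, and the case $k\ge 2$ (part (ii)) is more delicate because the $k$-factor barrier involves a weighted count of components and one must control the contribution of components of intermediate size. The endpoint-maximization in Step 3 is routine but must be done carefully as a function of the integer parameter $q$, keeping track of the floor in $\lfloor\frac{n+2k-5}{2}\rfloor$.
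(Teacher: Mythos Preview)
Your overall architecture --- pass to the $l$-closure $H$, show $H$ is structurally constrained, bound $N_r$ by $\varphi$ or $\phi$, and conclude --- matches the paper.  But your proposed mechanism for Step~2 is quite different from what the paper actually does, and for part~(ii) it is likely unworkable as written.

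The paper never invokes Tutte's theorem or the Belck--Tutte $k$-factor barrier.  Instead it treats ``has a $k$-factor'' purely as an abstract $l$-stable property and applies the P\'osa-property machinery: Lemma~2.1 (Xue--Liu--Kang) says that if an $l$-closed graph lacks an $l$-stable property then it has $(n-l+q,q)$-P\'osa property for some $\delta\le q\le\lfloor (l-1)/2\rfloor$; Fact~2.1 (F\"uredi--Kostochka--Luo) converts this directly into $N_r(H)\le\varphi(n,r,q)$ (resp.\ $\phi(n,r,k,q)$), which together with your endpoint/convexity observation forces $q=\delta$; and Lemma~2.2 (Duan et al.) then says the high-degree vertices form a clique.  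From there the structure of $H$ is pinned down by analyzing the set $Y$ of vertices of degree at least $n-\delta-1$ (resp.\ $n-\delta+2k-4$).  Your route via a Tutte barrier set $S$ might be pushed through for part~(i), but for $k\ge2$ the Belck--Tutte condition involves two sets $D,S$ and weighted/parity component counts, and extracting the clean form $K_q\vee(K_{\ast}+I_{\ast})$ from that plus the closure condition is far from the ``short case analysis'' you describe; the P\'osa framework sidesteps this entirely.

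You are also missing the final step.  After determining $H\cong K_s\vee(K_{\ast}+R(\ast,\ast))$ with $q=\delta$, the paper must still show that all but the two listed exceptions actually \emph{have} the factor.  This is done constructively: for part~(i) one uses Dirac's theorem on $H[X\cup Y]$ to build a Hamilton path; for part~(ii) one carves out a $2\delta$-vertex subgraph meeting the hypothesis of Katerinis's minimum-degree $k$-factor theorem (Lemma~2.4), and combines it with a $k$-factor of the leftover complete graph (using $n\ge 2\delta+k+1$ and the parity $nk$ even here).  Your proposal does not account for this, and it is where most of the case analysis in the paper lives.
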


By the definition of $\varphi(n,r,q)$, we know that Theorem \ref{main1} (i) improves the following result for a large enough $n$.

\begin{thm}[Duan et al. \cite{Duan2020}]\label{th22}
Let $G$ be a graph of order $n$ with minimum degree $\delta$.
If $N_{r}(G)>{\rm max} \left\{\varphi(n,r,\delta), \varphi(n,r,\frac{n}{2}-1)\right\}$ for each $r\geq2$, then $G$ has a $1$-factor.
\end{thm}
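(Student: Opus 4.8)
The plan is to argue by contradiction, combining the Bondy--Chv\'atal closure of Theorem \ref{th1}(i) with Tutte's classical $1$-factor theorem and an extremal clique-counting estimate. Note first that a $1$-factor forces $n$ to be even, which is also what makes $\varphi(n,r,\tfrac n2-1)$ meaningful, so I take $n$ even as understood. Suppose $G$ has no $1$-factor and set $\bar G=C_{n-1}(G)$. By Theorem \ref{th1}(i), $\bar G$ has no $1$-factor either; since the closure is obtained from $G$ only by adding edges, we have $N_r(\bar G)\ge N_r(G)$ for every $r$ and $\delta(\bar G)\ge\delta$. It therefore suffices to bound $N_r(\bar G)$ from above by $\max\{\varphi(n,r,\delta),\varphi(n,r,\tfrac n2-1)\}$, which contradicts the hypothesis $N_r(G)>\max\{\varphi(n,r,\delta),\varphi(n,r,\tfrac n2-1)\}$.

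By Tutte's theorem there is a set $S\subseteq V(\bar G)$ with $o(\bar G-S)\ge|S|+2$, where $o$ counts odd components; put $s=|S|$ and let $D_1,\dots,D_p$ be all components of $\bar G-S$, so $p\ge o(\bar G-S)\ge s+2$. Two observations pin down the structure. First, distinct components are nonadjacent, so $\bar G$ is a subgraph of $H:=K_s\vee\big(K_{|D_1|}+\cdots+K_{|D_p|}\big)$, whence $N_r(\bar G)\le N_r(H)$. Second, the minimum-degree hypothesis forces $|D_i|\ge\max\{1,\delta-s+1\}$ for each $i$ (a vertex of $D_i$ has degree at most $|D_i|-1+s$ in $\bar G$), while applying the closure condition to two vertices lying in different components gives $2\delta\le n-2$, i.e.\ $\delta\le\tfrac n2-1$; moreover $n-s\ge p\ge s+2$ forces $s\le\tfrac n2-1$. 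It remains to maximise $N_r(H)$ over all admissible component profiles.

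I would perform this optimisation in two stages. Writing $N_r(H)=\binom{s}{r}+\sum_{j=0}^{\min(s,r-1)}\binom{s}{j}\sum_{i=1}^{p}\binom{|D_i|}{r-j}$, convexity of $x\mapsto\binom{x}{r-j}$ shows that, for fixed $s$ and fixed total size $n-s$, each inner sum, and hence $N_r(H)$, is largest when the component sizes are as unequal as the constraints permit; that is, when one component is as large as possible and the remaining $s+1$ components all have the minimum allowed size. When $s\ge\delta$ this minimum is $1$, the resulting graph is exactly $H_s:=K_s\vee(K_{n-2s-1}+I_{s+1})$, and a direct count yields $N_r(H_s)=\varphi(n,r,s)$. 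The second stage is to check that $\varphi(n,r,\cdot)$ is convex on $[\delta,\tfrac n2-1]$, so that it attains its maximum at one of the endpoints $s=\delta$ or $s=\tfrac n2-1$, producing precisely the bound $\max\{\varphi(n,r,\delta),\varphi(n,r,\tfrac n2-1)\}$.

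The main obstacle will be the regime $s<\delta$, where the forced component size $\delta-s+1$ exceeds $1$ (and must additionally be rounded up to preserve oddness), so the extremiser is $K_s\vee\big(K_{\mathrm{big}}+(s+1)K_{m}\big)$ with $m=\delta-s+1$ rather than the clean graph $H_s$. Here one must show that enlarging the small components away from singletons strictly decreases the clique count, so that this range is dominated by the value at $s=\delta$; an elementary but careful comparison of the two binomial expressions, using $\delta\le\tfrac n2-1$ together with the feasibility inequality $s+(s+2)(\delta-s+1)\le n$, should settle it. Verifying the convexity of $\varphi(n,r,\cdot)$ for all $r\ge2$ and reconciling the parity requirements on the odd components with the extremal profiles are the remaining technical points; the rest is bookkeeping.
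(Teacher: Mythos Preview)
This theorem is not proved in the paper; it is quoted from Duan et al.\ as a benchmark that Theorem~\ref{main1}(i) sharpens. The argument the paper gives for Theorem~\ref{main1}(i) is the nearest comparison, and it takes a different route from yours. Instead of Tutte's theorem, the paper passes to $H=C_{n-1}(G)$ and applies Lemma~\ref{le1}: since ``having a $1$-factor'' is $(n-1)$-stable and $H$ lacks one, there is an integer $q$ with $\delta\le q\le\tfrac n2-1$ for which $H$ has the $(q{+}1,q)$-P\'osa property. Fact~\ref{fac1} then yields $N_r(G)\le N_r(H)\le\varphi(n,r,q)$ in one stroke, and Theorem~\ref{th22} follows once one checks that $\varphi(n,r,\cdot)$ attains its maximum on $[\delta,\tfrac n2-1]$ at an endpoint---the same convexity step you also invoke. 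Duan et al.\ themselves work in this P\'osa framework; their structural Lemma~\ref{le2} is quoted here.

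Your Tutte-based approach is correct in outline but genuinely different: it trades the single appeal to the P\'osa/F\"uredi--Kostochka--Luo machinery for a hands-on optimisation over the component profile of $H-S$. The extra cost is precisely the obstacle you flag: when $s=|S|<\delta$ the minimum-degree constraint forces $|D_i|\ge\delta-s+1>1$, so the extremal configuration is no longer $H_s$ and a separate comparison with $\varphi(n,r,\delta)$ is required. The P\'osa route never encounters this case, since Lemma~\ref{le1} guarantees $q\ge\delta$ from the outset and Fact~\ref{fac1} delivers $\varphi(n,r,q)$ directly without any block-size optimisation. What your approach buys is independence from Lemma~\ref{le1} and Fact~\ref{fac1}; what it costs is the additional $s<\delta$ case analysis and the parity bookkeeping you mention.
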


By direct calculation, $\varphi(n,2,\delta+1)\geq\varphi(n,2,\frac{n}{2}-1)$ for $n\geq 6\delta+10,$ and
$\phi(n,2,k,\delta+1)\geq \phi(n,2,k,\lfloor\frac{n+2k-5}{2}\rfloor)$ for $n\geq 6\delta+4k-3$.
Combining Theorem \ref{main1}, we immediately deduce the following result.

\begin{cor}\label{cor1}
Let $G$ be a graph of order $n$ with $nk$ even and minimum degree at least $\delta$.\\
(i) If $e(G)>\varphi(n,2,\delta+1)$ with $n\geq 6\delta+10$ and $1\leq \delta \leq \frac{n}{2}-1$,
then $G$ has a $1$-factor unless $C_{n-1}(G)\cong K_{n-\delta-1}\cup K_{\delta+1}$ ($\delta$ is even) or $K_{\delta}\vee(K_{n-2\delta-1}+I_{\delta+1})$.\\
(ii) If $e(G)>\phi(n,2,k,\delta+1)$ with $n\geq 6\delta+4k-3$ and $2k-2\leq \delta \leq \lfloor\frac{n+2k-5}{2}\rfloor$,
then $G$ has a $k$-factor for $k\geq 2$.
\end{cor}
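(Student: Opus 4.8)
The plan is to obtain Corollary \ref{cor1} as an immediate specialization of Theorem \ref{main1} to the case $r=2$, using the elementary identity $N_2(G)=e(G)$ (a $2$-clique is just an edge), together with the two polynomial comparisons recorded just before the statement.

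For part (i) I would first check that the hypotheses of Theorem \ref{main1}(i) are in force with $k=1$: the requirement $n\geq 2\delta+k+1=2\delta+2$ follows from $n\geq 6\delta+10$, while $1\leq\delta\leq\frac{n}{2}-1$ and the parity condition ($n$ even) are assumed. It then remains only to show that $e(G)>\varphi(n,2,\delta+1)$ forces $N_2(G)>\max\{\varphi(n,2,\delta+1),\varphi(n,2,\frac{n}{2}-1)\}$, which reduces to the single claim $\varphi(n,2,\delta+1)\geq\varphi(n,2,\frac{n}{2}-1)$. Here $\varphi(n,2,q)=\binom{n-q-1}{2}+q(q+1)$, and substituting $q=\frac{n}{2}-1$ gives the closed form $\varphi(n,2,\frac{n}{2}-1)=\frac{3n(n-2)}{8}$, so the comparison is a quadratic inequality in $n$. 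Its discriminant works out to the perfect square $(4\delta+6)^2$, so the roots are exactly $n=2\delta+4$ and $n=6\delta+10$ and the inequality holds on the upper branch $n\geq 6\delta+10$, as needed. Theorem \ref{main1}(i) then gives that $G$ has a $1$-factor unless $C_{n-1}(G)$ is one of the two exceptional graphs listed there.

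For part (ii) the argument is structurally identical. With $k\geq2$ one checks that $n\geq 2\delta+k+1$ follows from $n\geq 6\delta+4k-3$ (indeed $6\delta+4k-3-(2\delta+k+1)=4\delta+3k-4\geq0$ since $\delta\geq 2k-2\geq2$), that $2k-2\leq\delta\leq\lfloor\frac{n+2k-5}{2}\rfloor$ is assumed, and that $nk$ is even. It then suffices to establish $\phi(n,2,k,\delta+1)\geq\phi(n,2,k,\lfloor\frac{n+2k-5}{2}\rfloor)$; writing $\phi(n,2,k,q)=\binom{n-q+2k-4}{2}+(q-2k+4)q$ and evaluating at $q=\delta+1$ and at $q=\lfloor\frac{n+2k-5}{2}\rfloor$, the comparison again becomes a quadratic inequality in $n$ whose relevant threshold is $n\geq 6\delta+4k-3$. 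Theorem \ref{main1}(ii) then delivers the claimed $k$-factor.

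So the only content beyond invoking Theorem \ref{main1} is the two polynomial comparisons. I expect the second to be a little more delicate because of the floor in $\lfloor\frac{n+2k-5}{2}\rfloor$, which forces a split according to the parity of $n$; one must check that the single threshold $n\geq 6\delta+4k-3$ works uniformly in both cases. Neither comparison uses any idea beyond manipulating quadratics, so apart from this bookkeeping the corollary is immediate.
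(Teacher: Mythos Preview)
Your proposal is correct and follows exactly the paper's approach: the paper states the two comparisons $\varphi(n,2,\delta+1)\geq\varphi(n,2,\tfrac{n}{2}-1)$ for $n\geq 6\delta+10$ and $\phi(n,2,k,\delta+1)\geq\phi(n,2,k,\lfloor\tfrac{n+2k-5}{2}\rfloor)$ for $n\geq 6\delta+4k-3$ as ``direct calculation'' and then says the corollary is immediate from Theorem~\ref{main1}. You supply more detail on the first comparison than the paper does (the discriminant and the two roots $2\delta+4$, $6\delta+10$), which is fine.
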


O \cite{O2021}, Wei and Zhang \cite{Wei2023} presented sufficient conditions in terms of $e(G)$ to guarantee
a graph to have a $1$-factor and a $k$-factor, respectively.

\begin{thm}[O \cite{O2021}]\label{th2}
Let $G$ be a connected graph of order $n$, where $n\geq 10$ is even. If $e(G)> {n-2\choose 2}+2$, then $G$ has a $1$-factor.
\end{thm}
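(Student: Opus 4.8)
The plan is to argue by contradiction through the Tutte--Berge criterion. Suppose $G$ is a connected graph of even order $n\geq 10$ with $e(G)>\binom{n-2}{2}+2$ but with no $1$-factor. By Tutte's theorem there is a set $S\subseteq V(G)$ with $o(G-S)>|S|$, where $o(\cdot)$ denotes the number of odd components. Writing $s=|S|$ and $q=o(G-S)$, the parity relation $q\equiv n-s\equiv s\pmod 2$ (here using that $n$ is even) strengthens this to $q\geq s+2$. First I would rule out $s=0$: since $G$ is connected and $n$ is even, $G$ consists of a single component of even order, so $o(G)=0$, contradicting $q\geq 2$. Hence $s\geq 1$; and since each of the $q$ odd components contains at least one vertex, $n-s\geq q\geq s+2$, so in fact $1\leq s\leq \frac{n}{2}-1$.

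The second step is to bound $e(G)$ from above in terms of $s$. There are at most $\binom{s}{2}$ edges inside $S$, at most $s(n-s)$ edges between $S$ and $V(G)\setminus S$, and the remaining edges all lie inside the components of $G-S$. A routine convexity / vertex-counting argument shows that, subject to $G-S$ having at least $s+2$ odd components, the sum $\sum_i\binom{|C_i|}{2}$ over the components is largest when $G-S$ has exactly $s+2$ components, of sizes $n-2s-1,1,1,\ldots,1$ (the value $n-2s-1$ is odd since $n$ is even, so this profile is admissible, and it is positive because $s\leq\frac{n}{2}-1$); introducing even components or extra odd components, or using non-complete components, can only decrease the count. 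Therefore
$$e(G)\leq g(s):=\binom{s}{2}+s(n-s)+\binom{n-2s-1}{2}.$$

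The last step is the (essentially only) computation: show $g(s)\leq\binom{n-2}{2}+2$ on $1\leq s\leq\frac{n}{2}-1$, which contradicts $e(G)>\binom{n-2}{2}+2$. Expanding both sides yields the clean identity
$$g(s)-\binom{n-2}{2}-2=\tfrac12\,(s-1)\,(3s-2n+8),$$
and on the range in question the factor $s-1$ is nonnegative while $3s-2n+8\leq 3(\tfrac n2-1)-2n+8=5-\tfrac n2\leq 0$ exactly because $n\geq 10$; hence the right-hand side is nonpositive. This argument also exhibits sharpness: equality forces $s=1$ (or, only when $n=10$, $s=4$), which traces back to the graphs $K_1\vee(K_{n-3}+I_2)$ and $K_4\vee I_6$, both of order $n$, both without a $1$-factor, and both with exactly $\binom{n-2}{2}+2$ edges. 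I expect the only real subtlety to be the extremal-structure bookkeeping in the middle step --- verifying that the ``one large clique plus isolated vertices'' profile is simultaneously admissible (correct parity, correct vertex count) and edge-maximal among all configurations compatible with a Tutte set of size $s$; once that is settled, the inequality $g(s)\leq\binom{n-2}{2}+2$ is just the one-line factorisation above. (One could alternatively pass to $C_{n-1}(G)$ via Theorem~\ref{th1}(i) first, but it does not shorten the argument, so I would stick with the direct Tutte--Berge route.)
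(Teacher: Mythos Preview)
The paper does not give its own proof of this statement: Theorem~\ref{th2} is quoted from O~\cite{O2021} purely as a benchmark against which Corollary~\ref{cor1}(i) is compared, so there is no in-paper argument to match your proposal against.

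That said, your Tutte--Berge argument is correct and is the natural route. The parity upgrade $q\geq s+2$, the exclusion of $s=0$ by connectivity, the convexity bound forcing the component profile $(n-2s-1,1,\ldots,1)$, and the factorisation
\[
g(s)-\binom{n-2}{2}-2=\frac{(s-1)(3s-2n+8)}{2}
\]
all check out; the condition $n\geq 10$ is used exactly where you say, to make $3s-2n+8\leq 0$ at $s=\frac{n}{2}-1$. Your extremal examples $K_1\vee(K_{n-3}+I_2)$ and (for $n=10$) $K_4\vee I_6$ are the right ones and confirm sharpness. If anything, the paper's own machinery for $1$-factors (closure plus P\'osa property, as in the proof of Theorem~\ref{main1}(i)) is heavier than what is needed here; your direct Tutte approach is the cleaner choice for this particular edge bound and is in the spirit of O's original argument.
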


The lower bound in Corollary \ref{cor1} (i) is ${n-3\choose 2}+6$. Hence our result improves Theorem \ref{th2} for $n\geq 16$.

\begin{thm}[Wei and Zhang \cite{Wei2023}]\label{th3}
Let $G$ be a graph of order $n\geq k+1$ with $nk$ even and minimum degree $\delta\geq k\geq 1$.
If $e(G)> {n-1\choose 2}+\frac{k+1}{2}$, then $G$ has a $k$-factor.
\end{thm}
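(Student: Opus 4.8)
The tool I would use is Tutte's $k$-factor theorem (the $f$-factor theorem specialized to $f\equiv k$): $G$ has a $k$-factor if and only if
$$\eta(S,T):=k|S|+\sum_{x\in T}d_{G-S}(x)-k|T|-q(S,T)\ge 0$$
for every pair of disjoint sets $S,T\subseteq V(G)$, where $q(S,T)$ is the number of components $C$ of $G-(S\cup T)$ with $k|V(C)|+e_G(T,C)$ odd. I would argue by contradiction: assume $G$ has no $k$-factor, so some disjoint pair $(S,T)$ has $\eta(S,T)<0$. Since $\eta(S,T)\equiv k|V(G)|=kn\equiv 0\pmod 2$ (this is where $nk$ even is used), the violation sharpens to $\eta(S,T)\le -2$. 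Writing $s=|S|$, $t=|T|$, $M=n-s-t$, letting $C_1,\dots,C_\omega$ be the components of $R:=G-(S\cup T)$ with orders $c_i=|V(C_i)|$, and setting $D:=\sum_{x\in T}d_{G-S}(x)$ and $q=q(S,T)$, the barrier reads $D\le q+kt-ks-2$ with $q\le\omega$.

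The plan is then to convert the barrier into an upper bound on $e(G)$ contradicting $e(G)>\binom{n-1}{2}+\frac{k+1}{2}$. Edges incident to $S$ cost nothing in the barrier (completing $S$ into a universal clique changes neither $D$, $q$, nor $\eta$), so they contribute at most $\binom{s}{2}+s(n-s)$; likewise each component may be taken complete, contributing $\sum_i\binom{c_i}{2}$ with $\sum_i c_i=M$. The remaining edges, those meeting $T$ inside $G-S$, number $e(G[T])+e_G(T,R)=D-e(G[T])\le D\le q+kt-ks-2$. Hence
$$e(G)\le\binom{s}{2}+s(n-s)+\bigl(q+kt-ks-2\bigr)+\sum_{i=1}^{\omega}\binom{c_i}{2}.$$
By convexity of $\binom{\cdot}{2}$ the last sum is largest when one component is as large as possible, so $\sum_i\binom{c_i}{2}\le\binom{M-\omega+1}{2}$; and since $q\le\omega$ while $\omega+\binom{M-\omega+1}{2}$ is decreasing in $\omega$ on the relevant range, the right-hand side is controlled by the smallest admissible number of components.

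The essential extra ingredient is the minimum-degree hypothesis $\delta(G)\ge k$, which forces that number of components to be large. Indeed $d_{G-S}(x)\ge d_G(x)-s\ge k-s$, so $D\ge t(k-s)$ whenever $s\le k$; feeding this into $D\le q+kt-ks-2$ yields $q\ge s(k-t)+2$, so $\omega\ge q$ cannot be too small. This is exactly what rules out the configurations that would otherwise break the bound: for instance $s=0$, $t=1$, $\omega=1$ (a single vertex $v$ with $G-v$ connected) would naively give $e(G)\le\binom{n-1}{2}+(k-1)$, exceeding the target for $k\ge4$, but minimum degree forces $d_G(v)\ge k$, hence $q\ge2$ and $\omega\ge2$, and the estimate drops back to $\binom{n-2}{2}+k$. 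After substituting the forced lower bound on $\omega$, I would optimize the resulting expression over $s,t\ge0$; convexity reduces this to the boundary values of $s$ and $t$, and each boundary case is a routine binomial comparison against $\binom{n-1}{2}+\frac{k+1}{2}$, with the constant $\frac{k+1}{2}$ absorbing the parity correction coming from $\eta(S,T)\le -2$.

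I expect the main obstacle to be the even-$k$ regime, where $T=\varnothing$ never yields a barrier (then $q(S,\varnothing)=0$) and so the optimization is genuinely two-dimensional in $(s,t)$: here the edges $e_G(T,R)$ both inflate $e(G)$ and, through the parity of $kc_i+e_G(T,C_i)$, determine which components are counted by $q$, so the interaction between the degree lower bound on $D$, the barrier upper bound on $D$, and the parity count must be handled simultaneously. Making the argument uniform for all $n\ge k+1$, rather than only for large $n$ where the quadratic term $\binom{M-\omega+1}{2}$ dominates, is the delicate point; once the reduction to boundary $(s,t)$ is in place, the remaining inequalities are elementary.
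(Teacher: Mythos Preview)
The paper does not prove this statement at all: Theorem~\ref{th3} is quoted verbatim from Wei and Zhang~\cite{Wei2023} and serves only as a benchmark, immediately followed by the remark that the paper's own Corollary~\ref{cor1}(ii) improves it for $n\ge 16k-15$. There is therefore no ``paper's own proof'' to compare your proposal against.

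As for the proposal itself, the strategy via the Tutte--Belck $k$-factor deficiency formula is the natural one and is indeed how Wei and Zhang proceed in~\cite{Wei2023}. Your outline is sound in spirit, but what you have written is a plan rather than a proof: the decisive step---optimizing the upper bound on $e(G)$ over all admissible $(s,t,\omega)$ simultaneously, uniformly down to $n=k+1$, while respecting both the degree constraint $D\ge t(k-s)^+$ and the parity interaction between $e_G(T,C_i)$ and $q$---is precisely the content of the theorem, and you have only named it, not carried it out. In particular, your claim that ``convexity reduces this to the boundary values of $s$ and $t$'' is where the real work lies: the objective is not jointly convex in $(s,t)$ once the constraint $\omega\ge q\ge s(k-t)+2$ is active, and the extremal configurations (which realize equality up to the $\tfrac{k+1}{2}$ slack) must be identified case by case. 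If you intend to reconstruct the Wei--Zhang argument you will need to execute that case analysis explicitly; the present sketch stops short of it.
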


The lower bound in Corollary \ref{cor1} (ii) is ${n-3\choose 2}+6k-3$. Hence our result improves Theorem \ref{th3} for $n\geq 16k-15$.

\begin{cor}\label{main4}
Let $k$ and $\delta$ be integers with $k\geq 2$.
Suppose that $G$ is a graph of order $n$ with $nk$ even and minimum degree at least $\delta$. \\
(i) If $\rho(G)>\frac{\delta-1}{2}+\sqrt{n^{2}-(3\delta+5)n+\frac{13\delta^{2}+23\delta+41}{4}},$
$1\leq \delta \leq \frac{n}{2}-1$ and $n\geq 6\delta+10$, then $G$ has a $1$-factor unless
$C_{n-1}(G)\cong K_{n-\delta-1}\cup K_{\delta+1}$ ($\delta$ is even) or $K_{\delta}\vee(K_{n-2\delta-1}+I_{\delta+1})$.\\
(ii) If $\rho(G)>\frac{\delta-1}{2}+\sqrt{n^{2}-(3\delta-4k+11)n+\frac{13\delta^{2}-(32k-94)\delta+16k^{2}-104k+161}{4}},$
 $2k-2\leq \delta \leq \lfloor\frac{n+2k-5}{2}\rfloor$ and $n\geq 6\delta+4k-3$, then $G$ has a $k$-factor for $k\geq 2$.
\end{cor}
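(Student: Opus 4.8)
The plan is to derive both parts from Corollary~\ref{cor1} by turning the eigenvalue hypothesis into the corresponding lower bound on $e(G)$, arguing by contradiction. Assume $G$ satisfies the spectral hypothesis of part (ii) (resp.\ part (i)) but has no $k$-factor (resp.\ no $1$-factor, with $C_{n-1}(G)$ isomorphic to neither exceptional graph). First I would show $G$ is connected: otherwise, since $\delta(G)\ge\delta$, every component has at least $\delta+1$ vertices, so the component realizing $\rho(G)$ has at most $n-\delta-1$ vertices, whence $\rho(G)\le n-\delta-2$; completing the square in the radicand and using $n\ge 6\delta+4k-3$, $2k-2\le\delta\le\lfloor\frac{n+2k-5}{2}\rfloor$ (resp.\ $n\ge 6\delta+10$, $1\le\delta\le\frac n2-1$) shows that $n-\delta-2$ lies strictly below the stated threshold, a contradiction---except, in part (i), on a narrow parameter window where one must instead argue that $\rho(G)>\text{threshold}$ forces the largest component to be near-complete on exactly $n-\delta-1$ vertices and the remainder to be a $K_{\delta+1}$, so that the edge count supplied by Corollary~\ref{cor1} pins $G$ down to the excluded exception $K_{n-\delta-1}\cup K_{\delta+1}$.

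With $G$ connected and $\delta(G)\ge\delta$, I would apply the classical Hong-type upper bound $\rho(G)\le\frac{\delta-1}{2}+\sqrt{2e(G)-\delta n+\frac{(\delta+1)^2}{4}}$. Substituting this into the spectral hypothesis and squaring gives $2e(G)-\delta n+\frac{(\delta+1)^2}{4}$ strictly above the threshold's radicand; in part (ii) the constants $3\delta-4k+11$ and $13\delta^2-(32k-94)\delta+16k^2-104k+161$ are tuned precisely so that this rearranges to $e(G)>\phi(n,2,k,\delta+1)$, and then Corollary~\ref{cor1}(ii)---whose remaining hypotheses hold---produces a $k$-factor, the required contradiction. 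In part (i) the same manipulation yields a lower bound on $e(G)$ that equals $\varphi(n,2,\delta+1)$ up to an additive discrepancy of order $\delta$; to reach the clean inequality $e(G)>\varphi(n,2,\delta+1)$ (and hence invoke Corollary~\ref{cor1}(i)) one needs either a sharpening of the Hong-type estimate valid for connected non-extremal graphs, or a stability step handling the few graphs whose edge count falls in that gap: such a $G$ must be a small perturbation of the extremal graph $K_{\delta+1}\vee(K_{n-2\delta-3}+I_{\delta+2})$, and one checks case by case that it either already has a $1$-factor or has closure $\cong K_\delta\vee(K_{n-2\delta-1}+I_{\delta+1})$.

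I expect part (i) to be the main obstacle, essentially because both exceptional graphs are edge-denser than $\varphi(n,2,\delta+1)$, so ``sufficiently many edges'' cannot by itself be the whole story: the argument must track $C_{n-1}(G)$---a \emph{closed} graph with no $1$-factor---and establish that, apart from the two listed graphs, every closed $1$-factor-free graph of order $n$ with minimum degree $\ge\delta$ has spectral radius below the threshold. Reconciling this with the order-$\delta$ slack above and with the borderline disconnected case in the first step is where I expect the structural and eigenvalue bookkeeping to concentrate. Verifying sharpness---that the two exceptions and the graph $K_{\delta+1}\vee(K_{n-2\delta-3}+I_{\delta+2})$ meet or exceed the stated radius---is, by contrast, a routine quotient-matrix/equitable-partition computation. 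Part (ii), having no exceptional cases, is the clean instance of the scheme above.
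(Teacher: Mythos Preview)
Your overall scheme---apply the Hong--Shu--Fang/Nikiforov bound to convert the spectral hypothesis into an edge-count inequality, then invoke Corollary~\ref{cor1}---is exactly the paper's proof. The paper does it as a direct argument, not by contradiction, but that is cosmetic.

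Two points where you diverge are worth flagging. First, the connectedness step is unnecessary: the version of the bound in Lemma~\ref{le41} (Nikiforov's extension of Hong--Shu--Fang) is stated for \emph{arbitrary} graphs with minimum degree~$\delta$, so no case analysis on components is needed. Second, the ``additive discrepancy of order~$\delta$'' you detected in part~(i), and the stability argument you sketch to cover it, are not present in the paper: the paper simply writes that the calculation yields $e(G)>\varphi(n,2,\delta+1)$ exactly, just as in part~(ii). In fact your arithmetic is right and the coefficient $23\delta$ in the displayed radicand of~(i) appears to be a typo for $46\delta$ (note $46=2\cdot 23$); with $46\delta$ one checks that
\[
2\varphi(n,2,\delta+1)-\delta n+\tfrac{(\delta+1)^2}{4}
= n^{2}-(3\delta+5)n+\tfrac{13\delta^{2}+46\delta+41}{4},
\]
so the inequality becomes $e(G)>\varphi(n,2,\delta+1)$ on the nose and Corollary~\ref{cor1}(i) applies directly. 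Thus the intended proof of~(i) is as short as that of~(ii), and your proposed stability/closure analysis is not needed.
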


For any integer $k\geq2,$ a {\it spanning $k$-tree} of a connected graph $G$ is a spanning tree in which every vertex has degree at most $k$.
Note that a spanning $2$-tree is a Hamilton path. Hence a spanning $k$-tree of a connected graph is a natural generalization of a Hamilton path.
Ozeki and Yamashita \cite{Ozeki2011} proved that it is an $\mathcal{NP}$-complete problem to decide
whether a given connected graph admits a spanning $k$-tree.
Win \cite{Win1989} proved a toughness-type condition for the existence of a spanning $k$-tree in a connected graph.
Ellingham and Zha \cite{Ellingham2000} presented a short proof to Win's theorem.
Using the toughness-type condition, Fan et al. \cite{Fan2021} posed a spectral condition for
the existence of a spanning $k$-tree in a connected graph.

\begin{thm}[Fan et al. \cite{Fan2021}]\label{th4}
Let $G$ be a connected graph of order $n\geq 2k+16$, where $k\geq3$ is an integer. If
$\rho(G)\geq \rho(K_{1}\vee (K_{n-k-1}+I_{k})),$
then $G$ has a spanning $k$-tree unless $G\cong K_{1}\vee(K_{n-k-1}+I_{k})$.
\end{thm}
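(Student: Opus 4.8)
The plan is to combine Win's toughness-type sufficient condition with a spectral-extremal analysis. Recall Win's theorem \cite{Win1989}: for an integer $k\geq 2$, a connected graph $G$ has a spanning $k$-tree whenever $c(G-S)\leq (k-1)|S|+1$ for every $S\subseteq V(G)$, where $c(H)$ denotes the number of connected components of $H$. So I would argue by contradiction: if $G$ has no spanning $k$-tree, then there is a nonempty $S\subseteq V(G)$ with $s:=|S|\geq 1$ and $c:=c(G-S)\geq (k-1)s+2$.

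Since deleting edges does not increase $\rho$, and since, among all graphs on $n-s$ vertices with $c$ components, the union $K_{n-s-c+1}+I_{c-1}$ maximizes the spectral radius of the join with $K_s$, the graph $G$ is a spanning subgraph of $G_{s,c}:=K_s\vee(K_{n-s-c+1}+I_{c-1})$ (here $c\leq n-s$). Hence $\rho(G)\leq \rho(G_{s,c})$, with equality only if $G\cong G_{s,c}$. The crux is then to prove that $\rho(G_{s,c})\leq \rho\bigl(K_1\vee(K_{n-k-1}+I_k)\bigr)$ for all admissible $(s,c)$ with $s\geq 1$ and $(k-1)s+2\leq c\leq n-s$, with equality if and only if $s=1$ and $c=k+1$; combined with the hypothesis $\rho(G)\geq \rho(K_1\vee(K_{n-k-1}+I_k))$, this forces $G\cong K_1\vee(K_{n-k-1}+I_k)$.

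I would carry out the comparison in two steps using the equitable partition $\{S,\ V(K_{n-s-c+1}),\ V(I_{c-1})\}$ of $G_{s,c}$, whose $3\times 3$ quotient matrix has a characteristic polynomial that is an explicit cubic in $\rho$ with coefficients polynomial in $n,k,s,c$. First, fixing $s$ and treating $c$ as a variable, show that $\rho(G_{s,c})$ is strictly decreasing in $c$: intuitively, raising $c$ by one moves a vertex out of the big clique into the independent set attached to $S$, which can only lower $\rho$, and this can be made rigorous either by comparing the cubics or by an interlacing/edge-shift argument. This reduces the problem to $c=(k-1)s+2$, so set $f(s):=\rho\bigl(G_{s,(k-1)s+2}\bigr)=\rho\bigl(K_s\vee(K_{n-ks-1}+I_{(k-1)s+1})\bigr)$. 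Second, show $f(1)>f(s)$ for every $s$ with $2\leq s\leq \lfloor (n-2)/k\rfloor$: for large $n$, $f(1)$ is essentially $n-k-1$ while $f(s)$ for $s\geq 2$ is essentially at most $n-2k$, so the two separate already for $k\geq 2$, and the hypotheses $n\geq 2k+16$ and $k\geq 3$ are exactly what make the separation hold for every admissible $s$; in the degenerate subranges $n-ks-1\in\{0,1\}$, where $G_{s,c}$ has no genuine clique part and reduces to a complete split graph $K_s\vee I_{n-s}$, the value of $f(s)$ lies far below the target and is handled separately.

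Finally, if $\rho(G)=\rho(K_1\vee(K_{n-k-1}+I_k))$ then all the inequalities above are equalities, which forces $(s,c)=(1,k+1)$ and $G\cong K_1\vee(K_{n-k-1}+I_k)$; conversely this graph genuinely has no spanning $k$-tree, since taking $S$ to be its unique dominating vertex gives $c(G-S)=k+1$, whereas deleting a vertex of degree at most $k$ from any spanning tree leaves at most $k$ components. The step I expect to be the main obstacle is the second half of the third paragraph --- showing $f$ is maximized at $s=1$ uniformly over all admissible $s$, i.e., keeping simultaneous control of a one-parameter family of cubic characteristic polynomials together with the several boundary and degenerate configurations, while invoking only the clean hypothesis $n\geq 2k+16$. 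The monotonicity of $\rho(G_{s,c})$ in $c$ and the initial reduction via Win's theorem are by comparison routine.
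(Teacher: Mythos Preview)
The paper does not actually prove this theorem: it is quoted from Fan et al.\ \cite{Fan2021} as motivation, and the paper's own contribution is the more general Theorem~\ref{main2} (the $m$-connected case), whose specialization $m=1$ recovers the same conclusion with a different lower bound on $n$. So there is no proof in the paper to compare your proposal against directly, but it is meaningful to compare it with the paper's proof of Theorem~\ref{main2}.

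Your outline via Win's theorem is exactly the route the paper attributes to Fan et al.\ (``Using the toughness-type condition, Fan et al.\ posed a spectral condition\ldots''), and your reductions---first passing to $K_{s}\vee(K_{n-s-c+1}+I_{c-1})$, then minimizing $c$ to $c=(k-1)s+2$, then showing the maximum over $s$ occurs at $s=1$ via the equitable $3\times 3$ quotient---are the standard steps for results of this type and are sound in outline. By contrast, the paper's proof of Theorem~\ref{main2} takes a genuinely different route: it never invokes Win's theorem. Instead it uses the Kano--Kishimoto closure (Theorem~\ref{th5}) together with Hong's inequality $\rho(G)\le\sqrt{2e(G)-n+1}$ to force a large edge count, then a P\'osa/clique-counting lemma (Lemma~\ref{le5}) to show the closure contains a clique of order at least $n-(k-1)m-1$, and finally a short structural analysis pinning the closure down to $K_{m}\vee(K_{n-km-1}+I_{km-m+1})$.

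What each approach buys: your Win-based argument is more direct and yields the tighter hypothesis $n\ge 2k+16$ in the connected case, but the toughness criterion does not obviously refine to exploit $m$-connectivity, which is why the paper switches to the closure machinery to reach general $m$; the trade-off is the weaker bound $n\ge 7k+2$ when $m=1$. Your anticipated difficulty---uniform control of the cubic family to show $f(1)>f(s)$ for all admissible $s$---is indeed the technical core of the Fan et al.\ argument, whereas the closure approach replaces that one-parameter spectral comparison by an edge-count/structural argument.
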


Kano and Kishimoto \cite{Kano2011} presented a closure theorem to assure that an $m$-connected graph has a spanning $k$-tree.

\begin{thm}[Kano and Kishimoto \cite{Kano2011}]\label{th5}
Let $G$ be an $m$-connected graph of order $n\geq k+1$, where $m\geq1$ and $k\geq 2$ are integers.
Then $G$ has a spanning $k$-tree if and only if the $(n-km+2m-1)$-closure $C_{n-(k-2)m-1}(G)$ of $G$ has a spanning $k$-tree.
\end{thm}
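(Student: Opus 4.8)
The plan is to prove the nontrivial (``if'') direction by showing that the property of having a spanning $k$-tree is $l$-stable on the class of $m$-connected graphs of order $n$, with $l=n-(k-2)m-1$; the ``only if'' direction is immediate, since a spanning $k$-tree of $G$ is also one of the supergraph $C_{n-(k-2)m-1}(G)$ on the same vertex set. Concretely, I would write the closure as a chain $G=H_{0}\subset H_{1}\subset\cdots\subset H_{t}=C_{l}(G)$ in which $H_{i+1}=H_{i}+u_{i}v_{i}$ with $d_{H_{i}}(u_{i})+d_{H_{i}}(v_{i})\geq l$; each $H_{i}$ is still $m$-connected, so it suffices to prove the following stability lemma and then apply it $t$ times, descending from a spanning $k$-tree of $H_{t}$ to one of $H_{0}=G$: if $H$ is an $m$-connected graph of order $n\geq k+1$, $uv\notin E(H)$, $d_{H}(u)+d_{H}(v)\geq n-(k-2)m-1$, and $H+uv$ has a spanning $k$-tree, then $H$ has a spanning $k$-tree.

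For the lemma, let $T$ be a spanning $k$-tree of $H+uv$. If $uv\notin E(T)$ we are done, so assume $uv\in E(T)$, and let $T_{u}\ni u$ and $T_{v}\ni v$ be the two subtrees of $T-uv$, of orders $n_{1}$ and $n_{2}$ with $n_{1}+n_{2}=n$. Call a vertex \emph{free} if its degree in its own subtree is at most $k-1$, so that it can absorb one new incident edge without violating the degree bound; note that $u$ is free in $T_{u}$ and $v$ is free in $T_{v}$, since each lost the edge $uv$. If some edge $xy\in E(H)$ joins a free vertex of $T_{u}$ to a free vertex of $T_{v}$, then $(T-uv)+xy$ is a spanning $k$-tree of $H$ and we are finished; otherwise I would argue for a contradiction with the degree-sum hypothesis. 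A degree count in a tree (the degrees sum to $2(n_{i}-1)$, while each non-free vertex contributes $k$) shows that $T_{u}$ has at least $\frac{(k-2)n_{1}+2}{k-1}$ free vertices and $T_{v}$ at least $\frac{(k-2)n_{2}+2}{k-1}$ free vertices; but the bound this alone yields on $d_{H}(u)+d_{H}(v)$ is not sharp enough, so one also has to reroute. If $u$ has a neighbour $x$ in $T_{v}$ that is not free, delete from $T_{v}$ the edge joining $x$ to one of the subtrees hanging off it: then $x$ becomes free, a subtree $F$ is cut loose, the edge $ux$ may be added, and it remains to reattach $F$ to the rest via an edge of $H$, which exists because $H$ is $m$-edge-connected. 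Performing this cut-and-reattach step for all such blocked neighbours, while tracking which vertices stay free after earlier reattachments, should bound by about $(k-2)m$ the number of vertices of $T_{v}$ that can obstruct a reconnection to $u$ (and symmetrically for $v$), and hence force $d_{H}(u)+d_{H}(v)<n-(k-2)m-1$, contradicting the hypothesis.

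The main obstacle is exactly this last step: making the cut-and-reattach procedure terminate and pinning the number of obstructing vertices to the precise value $(k-2)m$ so that it matches the threshold $l=n-(k-2)m-1$ (a coarser estimate would yield only a weaker closure). This is precisely where $m$-connectivity, rather than mere connectivity, is needed --- via a Menger-type / $(m-1)$-cut argument one controls how many floating subtrees can fail to be reattached at a free vertex --- and where one must ensure that the reattaching edges always land on vertices still free after the previous reroutings; the tree-structure facts (every cut-off subtree has many leaves, hence many free vertices) are what keep this in check. The degenerate cases $n_{1}=1$ or $n_{2}=1$, and the case $k=2$ (where ``free'' simply means ``not of degree $k$'', and the argument specializes to the classical Hamilton-path closure), should be dealt with separately and are routine. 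A possible alternative would be to deduce the lemma from Win's toughness-type sufficient condition for a spanning $k$-tree, but the gap between its necessary and sufficient forms makes the direct rerouting argument cleaner.
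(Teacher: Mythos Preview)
The paper does not prove this theorem; it is quoted as a result of Kano and Kishimoto and invoked as a black box in the proof of Theorem~\ref{main2}. There is therefore no proof in the present paper against which to compare your proposal.

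On the merits of the proposal itself: your overall framework --- reduce the ``if'' direction to a one-edge stability lemma and descend along the closure chain $G=H_{0}\subset\cdots\subset H_{t}=C_{l}(G)$ --- is the standard way closure theorems are established, and your identification of the crux (extracting the exact slack $(k-2)m$ from $m$-connectivity) is accurate. The genuine gap lies in the rerouting paragraph. As written, cutting off a subtree $F$ and reattaching it ``via an edge of $H$, which exists because $H$ is $m$-edge-connected'' is not enough: connectivity guarantees such an edge exists, but not that its other endpoint is still free, so the reattachment may itself produce a vertex of degree $k{+}1$ and you have only relocated the violation; iterating without a termination invariant can loop or degrade the constant far below what is needed. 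What is missing is an extremal choice of $T$ among all spanning $k$-trees of $H+uv$ containing $uv$ (so that any single successful swap immediately contradicts extremality, rather than requiring an uncontrolled sequence of swaps), combined with a Menger/fan argument that turns $m$-connectivity into $m$ internally disjoint structures along which obstructions can be counted separately and then summed to give exactly $(k-2)m$. Your free-vertex count $\frac{(k-2)n_{i}+2}{k-1}$ is correct but, as you yourself observe, does not on its own couple to $m$. So the plan is the right one, but the rerouting step as sketched is not yet an argument; you would need to consult the original Kano--Kishimoto paper for the precise mechanism.
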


For more results on spanning $k$-tree, the reader can refer to \cite{Nakamoto2009, Thomassen1994, Kawarabayashi2003, Neumann1991}.
Motivated by the ideas on Hamilton path from Li and Ning \cite{Li2016} and using typical spectral techniques,
we prove a tight spectral condition to guarantee the existence of a spanning $k$-tree in an $m$-connected graph.

\begin{thm}\label{main2}
Let $G$ be an $m$-connected graph of order $n\geq {\rm max}\{(7k-2)m+4, (k-1)m^{2}+\frac{1}{2}(3k+1)m+\frac{9}{2}\}$,
where $m\geq 1$ and $k\geq2$ are integers. If $\rho(G)\geq \rho(K_{m}\vee (K_{n-km-1}+I_{km-m+1})),$
then $G$ has a spanning $k$-tree unless $G\cong K_{m}\vee (K_{n-km-1}+I_{km-m+1})$.
\end{thm}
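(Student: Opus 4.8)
The plan is to argue by contradiction, extending the scheme used by Fan et al.\ \cite{Fan2021} for $m=1$ and carrying the parameter $m$ through the spectral estimates. Write $H:=K_{m}\vee(K_{n-km-1}+I_{km-m+1})$, and suppose $G$ is $m$-connected with $\rho(G)\geq\rho(H)$, has no spanning $k$-tree, and $G\not\cong H$. First note that $H$ genuinely has no spanning $k$-tree: any spanning tree $T$ of a graph obeys $c(G-S)\leq c(T-S)\leq1+\sum_{v\in S}(d_{T}(v)-1)$ for every vertex set $S$, so a spanning $k$-tree would force $c(G-S)\leq(k-1)|S|+1$, whereas $c(H-V(K_{m}))=(k-1)m+2$ (here $c(\cdot)$ denotes the number of components). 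By Theorem \ref{th5}, the closure $\widehat{G}:=C_{n-(k-2)m-1}(G)$ also has no spanning $k$-tree; since $G\subseteq\widehat{G}$ it follows that $\widehat{G}$ is $m$-connected, $\rho(\widehat{G})\geq\rho(G)\geq\rho(H)$, and $\widehat{G}$ equals its own $(n-(k-2)m-1)$-closure. Moreover $\widehat{G}\neq K_{n}$, since $K_{n}$ with $n\geq3$ contains a Hamilton path and hence a spanning $k$-tree.

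Next I would extract structure from $\widehat{G}$. By Win's toughness-type criterion \cite{Win1989}, the connected graph $\widehat{G}$, which has no spanning $k$-tree, admits a nonempty $S\subseteq V(\widehat{G})$ with $c(\widehat{G}-S)\geq(k-1)|S|+2$. Put $s:=|S|$; then $s\geq m$ since $\widehat{G}$ is $m$-connected, and $c:=c(\widehat{G}-S)\geq(k-1)s+2$. As $c\leq n-s$ we get $ks\leq n-2$, so $m\leq s\leq(n-2)/k$. Letting the component orders of $\widehat{G}-S$ be $a_{1}\geq\cdots\geq a_{c}\geq1$ with $\sum_{i}a_{i}=n-s$, the graph $\widehat{G}$ is a spanning subgraph of $G_{s,c}:=K_{s}\vee(K_{a_{1}}+\cdots+K_{a_{c}})$, and since $\widehat{G}$ is connected, $\rho(\widehat{G})\leq\rho(G_{s,c})$ with equality only if $\widehat{G}=G_{s,c}$.

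The heart of the proof is to show $\rho(G_{s,c})\leq\rho(H)$ with equality only when $(s,c)=(m,(k-1)m+2)$ and the component partition is the most unbalanced one, in which case $G_{s,c}=H$. I would do this in three monotonicity steps. (i) For fixed $s,c$: a vertex-shifting argument (moving a vertex from a smaller clique to a larger one, leaving the join with $K_{s}$ intact, strictly increases $\rho$) shows $\rho(K_{s}\vee(K_{a_{1}}+\cdots+K_{a_{c}}))$ is maximized by the partition $(n-s-c+1,1,\ldots,1)$, i.e.\ by $K_{s}\vee(K_{n-s-c+1}+I_{c-1})$, strictly unless the partition already has this shape. (ii) For fixed $s$: deleting one vertex of the dominant clique and instead joining it only to $S$ strictly decreases $\rho$, so $\rho(K_{s}\vee(K_{n-s-c+1}+I_{c-1}))$ is strictly decreasing in $c$ and thus maximized at $c=(k-1)s+2$, giving $F_{s}:=K_{s}\vee(K_{n-ks-1}+I_{ks-s+1})$ with $F_{m}=H$. (iii) $\rho(F_{s})<\rho(F_{m})=\rho(H)$ for all $s$ with $m<s\leq(n-2)/k$.

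For step (iii) I would use the equitable partition $\bigl(V(K_{s}),V(K_{n-ks-1}),V(I_{ks-s+1})\bigr)$ of $F_{s}$, with quotient matrix $B_{s}=\bigl(\begin{smallmatrix}s-1 & n-ks-1 & ks-s+1\\ s & n-ks-2 & 0\\ s & 0 & 0\end{smallmatrix}\bigr)$, so that $\rho(F_{s})$ is the largest root of $g_{s}(x):=\det(xI-B_{s})$; it then suffices to show that this largest root strictly decreases as $s$ increases from $m$, e.g.\ by checking $g_{m}\bigl(\rho(F_{s})\bigr)<0$. This is the step that consumes the hypothesis $n\geq\max\{(7k-2)m+4,\ (k-1)m^{2}+\tfrac12(3k+1)m+\tfrac92\}$, and I expect it to be the main obstacle: one must pin down $\rho(F_{s})$ from the cubic $g_{s}$ accurately enough and verify the comparison uniformly over all admissible $s$, the delicate cases being $s=m+1$ and $s$ close to $(n-2)/k$, where the dominant clique $K_{n-ks-1}$ becomes small and the equitable $3$-partition degenerates. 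Granting (i)--(iii), the chain $\rho(H)\leq\rho(\widehat{G})\leq\rho(G_{s,c})\leq\rho(F_{s})\leq\rho(H)$ is forced to be a chain of equalities; tracing the equality cases gives $s=m$, $c=(k-1)m+2$, the unbalanced partition, and hence $\widehat{G}=H$; finally $G\subseteq\widehat{G}=H$ together with $\rho(G)=\rho(H)$ and the connectedness of $G$ yields $G=H$, contradicting $G\not\cong H$. A secondary point is to keep the monotonicity in steps (i) and (ii) strict so that the equality analysis isolates $H$ uniquely.
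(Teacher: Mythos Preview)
Your approach differs from the paper's, which does not use Win's criterion or spectral monotonicity over a family $F_{s}$. Instead the paper converts the spectral hypothesis into an edge bound via $\rho(G)\geq\rho(H)>\rho(K_{n-(k-1)m-1})=n-(k-1)m-2$ together with Hong's inequality $\rho(G)\leq\sqrt{2e(G)-n+1}$, obtaining $e(G)\geq\binom{n-(k-1)m-2}{2}+(k-1)m^{2}+(k+1)m+3$; this is where the bound $n\geq(k-1)m^{2}+\tfrac{1}{2}(3k+1)m+\tfrac{9}{2}$ enters. A separate edge-condition lemma (Lemma~\ref{le6}, resting on the clique-extraction Lemma~\ref{le5}) then shows, by analysing the degree structure of the closure directly, that under this edge bound the $(n-(k-2)m-1)$-closure of $G$ must be isomorphic to $H$; that structural lemma is what consumes $n\geq(7k-2)m+4$. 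No quotient-matrix comparison is needed at all.

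More importantly, your route has a genuine gap at the invocation of Win's theorem. You correctly derive the \emph{necessary} condition $c(G-S)\leq(k-1)|S|+1$ for a spanning $k$-tree and use it to show $H$ has none, but Win's \emph{sufficient} condition reads $c(G-S)\leq(k-2)|S|+2$, so its contrapositive only produces a set $S$ with $c(\widehat{G}-S)\geq(k-2)s+3$, not $(k-1)s+2$. These coincide only when $s=1$, which is why the scheme goes through for $m=1$. For $m\geq2$ you are forced to $s\geq m\geq2$, and with the correct Win bound step~(ii) now yields $F_{s}=K_{s}\vee(K_{n-(k-1)s-2}+I_{(k-2)s+2})$; in particular $F_{m}$ has a dominant clique larger than that of $H$ by $m-1$ vertices, so $\rho(F_{m})>\rho(H)$ and the sandwich $\rho(H)\leq\rho(\widehat{G})\leq\rho(F_{s})\leq\rho(F_{m})$ no longer closes. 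Nor can this be repaired by an equality analysis: for $m\geq2$ the graph $F_{m}$ actually \emph{has} a spanning $k$-tree (place the $m$ hub vertices at the two ends and along a Hamilton path of $K_{m}\vee K_{n-(k-1)m-2}$ and attach $k-1$ pendants to each endpoint hub and $k-2$ to each interior hub, exactly accounting for the $(k-2)m+2$ isolated vertices), so $F_{m}$ cannot equal $\widehat{G}$. The slack between the necessary condition and Win's sufficient condition is precisely the obstruction here, and it is why the paper routes the argument through edge counts and closure structure rather than through toughness.
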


Kaneko \cite{Kaneko2001} introduced the concept of leaf degree of a spanning tree.
Let $T$ be a spanning tree of a connected graph $G$.
The {\it leaf degree of a vertex $v\in V(T)$} is defined as the number of leaves adjacent to vertex $v$ in $T$.
If $v$ is a leaf of $T$ with at least three vertices, then the leaf degree of $v$ equals zero.
Furthermore, the {\it leaf degree of $T$} is the maximum leaf degree among all the vertices of $T$.
They gave a necessary and sufficient condition for a connected graph to contain a spanning tree with leaf degree at most $k$.

\begin{thm}[Kaneko \cite{Kaneko2001}]\label{th6}
Let $G$ be a connected graph and $k\geq 1$ be an integer. Then $G$ has a spanning tree with leaf degree at most $k$ if and only if
$i(G-S)<(k+1)|S|$ for every nonempty subset $S\subseteq V(G)$.
\end{thm}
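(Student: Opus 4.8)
\emph{Plan.} I would prove the two implications separately; necessity is the routine half, sufficiency the substantial one, handled by an extremal spanning tree together with a leaf-relocation surgery.

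\emph{Necessity.} Suppose $T$ is a spanning tree of $G$ with leaf degree at most $k$ and let $S\subseteq V(G)$ be nonempty. Let $I$ be the set of isolated vertices of $G-S$, so $|I|=i(G-S)$; every vertex of $I$ has all its $G$-neighbours (hence all its $T$-neighbours) in $S$, and $I$ is independent in $G$. Split $I=I_{1}\cup I_{\geq 2}$ according to whether the vertex is a leaf of $T$ or not. Each $v\in I_{1}$ is a leaf whose unique $T$-neighbour lies in $S$, and since every vertex of $S$ is adjacent to at most $k$ leaves of $T$ we get $|I_{1}|\leq k|S|$. For $I_{\geq 2}$, delete the leaves $I_{1}$ from $T$ (removing pendant vertices leaves a tree) and consider the forest induced on $I_{\geq 2}\cup S$: it has $|I_{\geq 2}|+|S|$ vertices; since $I_{\geq 2}$ is independent and each of its vertices still has degree at least $2$ with all edges going to $S$, this forest has at least $2|I_{\geq 2}|$ edges, while a forest on that many vertices has at most $|I_{\geq 2}|+|S|-1$ edges, so $|I_{\geq 2}|\leq |S|-1$. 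Hence $i(G-S)=|I_{1}|+|I_{\geq 2}|\leq k|S|+|S|-1<(k+1)|S|$.

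\emph{Sufficiency.} Assume the Hall-type condition holds and argue by contradiction with an extremal spanning tree. Among all spanning trees of $G$, pick $T$ minimising $f(T)=\sum_{v}(\ell_{T}(v)-k)^{+}$, where $\ell_{T}(v)$ is the leaf degree of $v$, and subject to that minimising $\sum_{v}\binom{\ell_{T}(v)}{2}$; suppose $f(T)>0$. The basic move is leaf-relocation: if $x$ is a leaf of $T$ with $T$-neighbour $p$ and $xy\in E(G)$ with $y\neq p$, then $T-px+xy$ is again a spanning tree, and one tracks the resulting change of the two potentials (decrease of $1$ in $f$ at $p$ when $\ell_{T}(p)>k$; increase of at most $1$ at $y$, which is $0$ unless $\ell_{T}(y)\geq k$; and when $f$ stays put, a change $\ell_{T}(y)-\ell_{T}(p)+1$ in $\sum\binom{\ell_{T}}{2}$, hence a strict drop unless $\ell_{T}(y)=\ell_{T}(p)-1$; a relocation onto another leaf always strictly drops $f$). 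Minimality then forces: for every overloaded vertex $u$ (one with $\ell_{T}(u)>k$) and every leaf $x$ of $u$, every $G$-neighbour of $x$ other than $u$ is an internal vertex with $\ell_{T}$ equal to $\ell_{T}(u)-1$; in particular it is overloaded when $\ell_{T}(u)\geq k+2$, and only when $\ell_{T}(u)=k+1$ can it have leaf degree exactly $k$.

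The goal is now to exhibit a nonempty $S$ with $i(G-S)\geq (k+1)|S|$, contradicting the hypothesis. In the favourable case in which every $G$-neighbour of every leaf of every overloaded vertex is again overloaded, take $S=\{v:\ell_{T}(v)\geq k+1\}$: the leaf-children of the vertices of $S$ are pairwise disjoint sets of total size $\sum_{v\in S}\ell_{T}(v)\geq (k+1)|S|$, and each such leaf is isolated in $G-S$, so we are done. The real difficulty is the presence of leaf-degree-exactly-$k$ vertices adjacent to leaves of overloaded $(k+1)$-vertices. To handle them I would iterate: push a leaf off an overloaded vertex onto such a $w$ (a relocation neutral for both potentials), which makes $w$ overloaded and exposes its own $k$ leaves to the same analysis, and continue, letting $S$ be the set of internal vertices swept out by all such alternating chains; minimality should show that no chain can reach an underloaded internal vertex or a leaf, so every leaf hanging off $S$ remains isolated in $G-S$, after which a count over $S$ must yield $i(G-S)\geq(k+1)|S|$. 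The main obstacle is making this count close — squeezing the full "$+1$ per vertex of $S$", not merely "$+k$ per vertex", out of the leaf-degree-$k$ vertices — which demands choosing the extremal tree (or the chains) delicately enough that the resulting configuration around $S$ cannot be internally rebalanced; an alternative route is to reduce first to $2$-connected $G$ via a block decomposition together with a strengthened inductive statement that controls the leaf degree at a prescribed cut vertex, and then to run the extremal argument blockwise.
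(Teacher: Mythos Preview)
The paper does not prove this statement at all: it is quoted verbatim from Kaneko's 2001 paper and used as a black box in the proof of Theorem~\ref{main3}. There is therefore no ``paper's own proof'' to compare against; everything in your proposal goes beyond what the present paper does.

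As for the content of your attempt: your necessity argument is clean and correct. The split $I=I_1\cup I_{\geq 2}$, the bound $|I_1|\le k|S|$ from the leaf-degree hypothesis, and the forest edge-count giving $|I_{\geq 2}|\le |S|-1$ combine exactly to yield $i(G-S)\le (k+1)|S|-1$.

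Your sufficiency argument, however, is not a proof but a plan with an explicitly acknowledged gap. The extremal choice of $T$ and the leaf-relocation move are the right ingredients, and you have correctly isolated the genuine difficulty: when $\ell_T(u)=k+1$ and a leaf of $u$ has a $G$-neighbour $w$ with $\ell_T(w)=k$, relocating onto $w$ is neutral for both of your potentials, and the naive set $S=\{v:\ell_T(v)\ge k+1\}$ need not satisfy $i(G-S)\ge(k+1)|S|$ because leaves of overloaded vertices can escape into $w$. Your proposed fix --- iterating relocations to sweep out a larger $S$ and then counting --- is roughly the shape of Kaneko's original argument, but you have not carried it out, and the phrase ``a count over $S$ must yield $i(G-S)\ge(k+1)|S|$'' is precisely the step that needs work: one has to show that the chain process terminates without ever reaching a vertex of leaf degree $<k$ or a leaf, and then argue that the $k$ original leaves at each absorbed $w$ together with the relocated leaf give the required $k+1$ isolated vertices per vertex of $S$, with no double-counting across the chains. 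Until that bookkeeping is written down, the sufficiency direction remains open in your write-up.
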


Using Kaneko's theorem, Ao, Liu and Yuan \cite{Ao2023} presented a tight spectral condition
for the existence of a spanning tree with leaf degree at most $k$ in a connected graph.

\begin{thm}[Ao, Liu and Yuan \cite{Ao2023}]\label{th7}
Let $G$ be a connected graph of order $n\geq 2k+12$, where $k\geq1$ is an integer.
If $\rho(G)\geq \rho(K_{1}\vee (K_{n-k-2}+I_{k+1})),$
then $G$ has a spanning tree with leaf degree at most $k$ unless $G\cong K_{1}\vee (K_{n-k-2}+I_{k+1})$.
\end{thm}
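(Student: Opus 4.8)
The plan is to argue by contradiction, with Kaneko's characterization (Theorem~\ref{th6}) as the combinatorial input. Assume $G$ has no spanning tree with leaf degree at most $k$; the goal is to deduce $\rho(G)\le\rho(K_{1}\vee(K_{n-k-2}+I_{k+1}))$ with equality only when $G\cong K_{1}\vee(K_{n-k-2}+I_{k+1})$, which together with the hypothesis $\rho(G)\ge\rho(K_{1}\vee(K_{n-k-2}+I_{k+1}))$ gives the theorem. By Theorem~\ref{th6} there is a nonempty $S\subseteq V(G)$ with $i(G-S)\ge(k+1)|S|$; write $s=|S|$ and $i=i(G-S)$. The $i$ isolated vertices of $G-S$ send all their edges into $S$, the other $n-s-i$ vertices of $G-S$ induce an arbitrary subgraph, and $S$ may be adjacent to everything, so $G$ is a spanning subgraph of $F_{s,i}:=K_{s}\vee(K_{n-s-i}+I_{i})$. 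Since $G$ is connected, $s\ge1$, hence $i\ge k+1$ and $(k+2)s\le s+i\le n$; as $F_{s,i}$ is connected, the Perron--Frobenius theorem yields $\rho(G)\le\rho(F_{s,i})$, with equality iff $G=F_{s,i}$.

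Next I would reduce to the extremal shape. Moving one vertex from the $K_{n-s-i}$-part of $F_{s,i}$ into the $I_{i}$-part deletes exactly $n-s-i-1$ edges, so for fixed $s$ the value $\rho(F_{s,i})$ is nonincreasing in $i$ and strictly decreasing while $n-s-i\ge2$. Hence $\rho(F_{s,i})\le\rho(F_{s})$, where
$$F_{s}:=F_{s,(k+1)s}=K_{s}\vee\bigl(K_{n-(k+2)s}+I_{(k+1)s}\bigr),$$
which is well defined since $(k+2)s\le n$; and for $s=1$ strict monotonicity forces $\rho(F_{1,i})=\rho(F_{1})$ only when $i=k+1$. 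Note that $F_{1}=K_{1}\vee(K_{n-k-2}+I_{k+1})$ is exactly the exceptional graph in the statement.

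The heart of the argument is to show $\rho(F_{s})<\rho(F_{1})$ for all $s$ with $2\le s\le\lfloor n/(k+2)\rfloor$. Put $t=n-(k+2)s$. The partition $\bigl(V(K_{s}),V(K_{t}),V(I_{(k+1)s})\bigr)$ of $F_{s}$ is equitable, so $\rho(F_{s})$ is the largest eigenvalue of the $3\times3$ quotient matrix, with characteristic polynomial $h_{s}(x)=x(x-s+1)(x-t+1)-s(n-s)x+(k+1)s^{2}(t-1)$. For $s=1$ this is $h_{1}(x)=x^{3}-(n-k-3)x^{2}-(n-1)x+(k+1)(n-k-3)$, and substituting $x=n-k-2$ gives $h_{1}(n-k-2)=-(k+1)<0$, so $\rho(F_{1})>n-k-2$. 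It therefore suffices to prove $\rho(F_{s})<n-k-2$ for $s\ge2$. For this I would use the classical bound $\rho(H)\le\sqrt{2e(H)-|V(H)|+1}$ for connected $H$, together with the edge count
$$2e(F_{s})=s(s-1)+t(t-1)+2s(n-s)=n^{2}-n-2(k+1)sn+(k+1)(k+3)s^{2}+(k+1)s;$$
the desired inequality $2e(F_{s})\le(n-k-2)^{2}+n-1$ simplifies to $(k+1)s^{2}-(2k+5)s+(k+3)\ge0$ under the (automatically available) constraint $n\ge(k+2)s$, which holds for every $s\ge3$, while $s=2$ is settled directly from $n\ge2k+12$. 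This also pinpoints $s=2$, $k\in\{1,2\}$ as the tight instance that dictates the size of the order bound.

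Finally I would stitch the estimates together: $\rho(G)\le\rho(F_{s,i})\le\rho(F_{s})\le\rho(F_{1})=\rho(K_{1}\vee(K_{n-k-2}+I_{k+1}))$. If $\rho(G)$ attains this value then all inequalities are tight, forcing $s=1$, $i=k+1$, and $G=F_{1,k+1}=K_{1}\vee(K_{n-k-2}+I_{k+1})$; otherwise $\rho(G)<\rho(K_{1}\vee(K_{n-k-2}+I_{k+1}))$. Combined with the hypothesis, this proves the theorem. I expect the genuinely delicate part to be the third step: controlling the $s$-dependence of the cubic $h_{s}$ (equivalently, verifying the quadratic inequality uniformly in $s$) and thereby pinning down how large $n$ must be relative to $k$; the subgraph comparison, the edge-shifting monotonicity, and the Perron--Frobenius equality analysis are all routine.
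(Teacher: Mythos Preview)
The paper does not prove Theorem~\ref{th7} itself; it is quoted from \cite{Ao2023} as motivation, and the paper instead proves the generalization Theorem~\ref{main3} (Section~4), whose $\delta=1$ case is essentially Theorem~\ref{th7} with the slightly different order bound $n\ge 3k+8$. So the natural comparison is between your argument and the paper's proof of Theorem~\ref{main3} specialized to $\delta=1$.

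Your proof is correct, and the overall architecture agrees with the paper's: apply Kaneko's Theorem~\ref{th6} to produce $S$ with $|S|=s$ and $i(G-S)\ge(k+1)s$, embed $G$ in $G'=K_{s}\vee(K_{n-(k+2)s}+I_{(k+1)s})$, and then compare $\rho(G')$ with the extremal $\rho(K_{1}\vee(K_{n-k-2}+I_{k+1}))$. The genuine difference is in this last step. The paper handles $s\ge\delta+1$ (here $s\ge 2$) by a direct Perron-vector calculation: writing $G^{*}$ for the extremal graph and taking Perron vectors $\mathbf{x}$ of $G'$ and $\mathbf{y}$ of $G^{*}$, it expands $\mathbf{y}^{T}(A(G^{*})-A(G'))\mathbf{x}$ edge by edge and uses the eigen-equations \eqref{eq1}--\eqref{eq2} to force $\rho(G^{*})>\rho(G')$. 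You instead sandwich the two spectral radii numerically: bound $\rho(F_{1})>n-k-2$ via the quotient-matrix polynomial $h_{1}$, bound $\rho(F_{s})\le\sqrt{2e(F_{s})-n+1}$ by Lemma~\ref{le8}, and reduce the comparison to the elementary inequality $(k+1)s^{2}-(2k+5)s+(k+3)\ge 0$ (plus a direct check at $s=2$ using $n\ge 2k+12$). Your route is more elementary and makes the role of the order hypothesis completely explicit; the paper's Perron-vector argument is heavier computationally but scales cleanly to general minimum degree~$\delta$, where the crude edge bound of Lemma~\ref{le8} would lose too much.
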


Motivated by Theorem \ref{th7}, we prove a tight spectral condition
for the existence of a spanning tree with leaf degree at most $k$ in a connected graph with minimum degree $\delta$.

\begin{thm}\label{main3}
Let $G$ be a connected graph of order $n\geq 3(k+2)\delta+2$ with minimum degree $\delta$,
where $k\geq1$ is an integer. If
$\rho(G)\geq \rho(K_{\delta}\vee(K_{n-k\delta-2\delta}+ I_{k\delta+\delta})),$
then $G$ has a spanning tree with leaf degree at most $k$ unless $G\cong K_{\delta}\vee(K_{n-k\delta-2\delta}+ I_{k\delta+\delta})$.
\end{thm}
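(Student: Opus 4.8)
The plan is to combine Kaneko's necessary-and-sufficient condition (Theorem~\ref{th6}) with a standard spectral-extremal argument in the spirit of Li and Ning, arguing by contradiction. Suppose $G$ has no spanning tree with leaf degree at most $k$. By Theorem~\ref{th6} there is a nonempty $S\subseteq V(G)$ with $i(G-S)\geq(k+1)|S|$. Write $|S|=s$ and let $i=i(G-S)$, so $i\geq(k+1)s$. Since $G$ is connected, every isolated vertex of $G-S$ has all its neighbors in $S$, and because $\delta(G)\geq\delta$ we get $s\geq\delta$. The extremal graph $K_{\delta}\vee(K_{n-k\delta-2\delta}+I_{k\delta+\delta})$ corresponds to the boundary case $s=\delta$, $i=k\delta+\delta=(k+1)\delta$, with the remaining $n-(k+1)\delta-\delta$ vertices forming a clique that is also joined to $S$; intuitively, to maximize $\rho$ one packs as many edges as possible, so one expects the extremal configuration to have $G$ a spanning subgraph of $K_{s}\vee(K_{n-s-i}+I_{i})$ for the relevant $s,i$.

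First I would bound $\rho(G)$ from above in terms of $n,s,i$. Since adding edges only increases the spectral radius and since every isolated vertex of $G-S$ sees only $S$, $G$ is a subgraph of $H_{s,i}:=K_{s}\vee(K_{n-s-i}+I_{i})$; hence $\rho(G)\leq\rho(H_{s,i})$. Now I would analyze $\rho(H_{s,i})$ as a function of the parameters, using the quotient matrix of the natural equitable partition $\{V(K_s),V(K_{n-s-i}),V(I_i)\}$, whose characteristic polynomial is an explicit cubic in $\rho$. The key monotonicity claims to establish are: (a) for fixed $s$, $\rho(H_{s,i})$ is decreasing in $i$ on the admissible range, so the worst (largest) case is $i=(k+1)s$; and (b) with $i=(k+1)s$, the resulting value $\rho(H_{s,(k+1)s})$ is, as a function of $s\geq\delta$, maximized at $s=\delta$ — equivalently strictly decreasing in $s$ on $[\delta,\ldots]$ — given the order hypothesis $n\geq 3(k+2)\delta+2$. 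Both (a) and (b) are the routine but delicate computations; they are handled by comparing the relevant cubic polynomials evaluated at the candidate spectral radius (showing the polynomial is positive there, which forces the root to have moved the right way), exactly as in the Li--Ning / Ao--Liu--Yuan arguments. This chain gives $\rho(G)\leq\rho(H_{\delta,(k+1)\delta})=\rho(K_{\delta}\vee(K_{n-k\delta-2\delta}+I_{k\delta+\delta}))$, contradicting the hypothesis unless equality holds throughout.

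Then I would handle the equality case: $\rho(G)=\rho(H_{\delta,(k+1)\delta})$ forces $G=H_{\delta,(k+1)\delta}$. Equality in $\rho(G)\leq\rho(H_{s,i})$ combined with strict monotonicity in (a) and (b) pins down $s=\delta$ and $i=(k+1)\delta$, and then $G\subseteq H_{\delta,(k+1)\delta}$ with equal spectral radius forces $G=H_{\delta,(k+1)\delta}$ by the Perron--Frobenius strictness (a proper spanning subgraph of a connected graph has strictly smaller spectral radius). Finally one checks directly that $H_{\delta,(k+1)\delta}$ indeed has no spanning tree with leaf degree at most $k$: taking $S=V(K_\delta)$ gives $i(G-S)=(k+1)\delta=(k+1)|S|$, which violates the strict inequality in Theorem~\ref{th6}. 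This establishes tightness and completes the proof.

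The main obstacle I anticipate is step (b): showing that among all boundary configurations $H_{s,(k+1)s}$ the spectral radius is largest precisely when $s=\delta$. Because increasing $s$ enlarges the dominating clique $K_s$ (which pushes $\rho$ up) but simultaneously converts $(k+1)$ new vertices per unit of $s$ into an independent set while shrinking the big clique $K_{n-s-(k+1)s}=K_{n-(k+2)s}$ (which pushes $\rho$ down), the comparison is genuinely nonmonotone in general and only goes the right way under the stated lower bound on $n$; extracting the sharp threshold $n\geq 3(k+2)\delta+2$ is precisely where the careful polynomial estimate lives. I would isolate this as a separate lemma comparing $\rho(H_{s,(k+1)s})$ and $\rho(H_{s+1,(k+1)(s+1)})$ via their quotient-matrix characteristic polynomials evaluated at the smaller candidate value, and show the sign is as needed for all $s\geq\delta$ in the given range of $n$.
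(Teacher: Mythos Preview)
Your proposal is correct and follows the same overall strategy as the paper: invoke Kaneko's criterion to produce a cut $S$ with $|S|=s\geq\delta$ and $i(G-S)\geq(k+1)s$, embed $G$ in $H_s:=K_s\vee(K_{n-(k+2)s}+I_{(k+1)s})$, and then show that $\rho(H_s)$ is maximized at $s=\delta$ under the hypothesis $n\geq 3(k+2)\delta+2$. Two remarks on where the paper differs. First, your step~(a) is immediate from subgraph monotonicity (Lemma~\ref{le7}): $H_{s,i+1}\subsetneq H_{s,i}$, so no polynomial work is needed, and the paper simply writes $G\subseteq H_s$ directly. Second, for the crucial comparison~(b) the paper does \emph{not} use characteristic polynomials; instead it fixes $G^*=H_\delta$ and $G'=H_s$ with $s\geq\delta+1$, realizes $G^*$ from $G'$ by an explicit edge-switch $G^*=G'+E_1-E_2$, and computes the bilinear form $\mathbf{y}^{T}(A(G^*)-A(G'))\mathbf{x}$ with the Perron vectors $\mathbf{x},\mathbf{y}$ of $G',G^*$ to obtain a contradiction from the assumption $\rho'\geq\rho^*$. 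This eigenvector argument handles all $s\geq\delta+1$ in a single computation, whereas your proposed step-by-step polynomial comparison between $H_s$ and $H_{s+1}$ would work but requires an inductive chain. Both techniques are standard and either yields the result; the paper's route is somewhat more direct here and makes the role of the bound $n\geq 3(k+2)\delta+2$ transparent in the final sign estimate $2n-2(k+2)\delta-s-2-\rho'>0$.
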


\section{Proof of Theorem \ref{main1}}

In this section we prove Theorem \ref{main1} by using closure theory. Let $G$ be a graph on $n$ vertices.
If there are at least $s$ vertices in $V(G)$ with degree at most $q$, then we say $G$ has {\it$(s,q)$-P\'{o}sa property}.
F\"{u}redi, Kostochka, and Luo\cite{Furedi2019} established the following fact.

\begin{fact}[F\"{u}redi, Kostochka, and Luo\cite{Furedi2019}]\label{fac1}
{\rm If $G$ has $(s,q)$-P\'{o}sa property and $n\geq s+q$, then $$N_{r}(G)\leq {n-s\choose r}+s{q\choose r-1}.$$ }
\end{fact}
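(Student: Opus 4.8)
The plan is to prove the bound by a direct double-counting argument that separates the $r$-cliques of $G$ according to how they meet the set of low-degree vertices. Since $G$ has the $(s,q)$-P\'{o}sa property, I would first fix an ordered set $S=\{v_1,\dots,v_s\}\subseteq V(G)$ of $s$ vertices, each of degree at most $q$ in $G$; such a set exists by definition. The idea is to delete these vertices one at a time and track how many $r$-cliques disappear at each step, so that cliques meeting $S$ in more than one vertex are never counted twice.

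Concretely, I would set $G_0=G$ and $G_i=G_{i-1}-v_i$ for $1\le i\le s$, so that $G_s=G-S$. The first key step is the splitting identity $N_r(G_{i-1})=N_r(G_i)+c_i$, where $c_i$ is the number of $r$-cliques of $G_{i-1}$ containing $v_i$: every $r$-clique of $G_{i-1}$ either avoids $v_i$ (and is then exactly an $r$-clique of $G_i$) or contains $v_i$, in which case it consists of $v_i$ together with an $(r-1)$-clique inside its neighbourhood, so $c_i\le \binom{d_{G_{i-1}}(v_i)}{r-1}$. Since deleting vertices only decreases degrees, $d_{G_{i-1}}(v_i)\le d_G(v_i)\le q$, whence $c_i\le \binom{q}{r-1}$. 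This inequality is the heart of the argument and the only place where the degree hypothesis enters.

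Telescoping the identity over $i=1,\dots,s$ then gives $N_r(G)=N_r(G_s)+\sum_{i=1}^{s}c_i\le N_r(G-S)+s\binom{q}{r-1}$. The final step is to bound $N_r(G-S)$: the graph $G-S$ has exactly $n-s$ vertices (nonnegative since $n\ge s+q\ge s$), and any graph on $n-s$ vertices has at most $\binom{n-s}{r}$ $r$-cliques because each clique is an $r$-subset of the vertex set. Combining the two estimates yields $N_r(G)\le \binom{n-s}{r}+s\binom{q}{r-1}$, as claimed.

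I would expect no serious obstacle here, as the argument is elementary counting; the one point requiring care is avoiding overcounting of cliques that contain several vertices of $S$, which the sequential deletion handles automatically by charging each such clique to the first of its $S$-vertices to be removed. An equivalent formulation roots every $r$-clique meeting $S$ at its minimum-index vertex $v_i\in S$ and bounds the cliques rooted at $v_i$ by the number of $(r-1)$-subsets of $N_G(v_i)$, again at most $\binom{q}{r-1}$. Finally, I would note that the bound is best possible: it is attained by $K_{q}\vee(I_{s}+K_{n-s-q})$ (which is well defined precisely because $n\ge s+q$), where the $s$ vertices of $I_{s}$ have degree exactly $q$ and each contributes exactly $\binom{q}{r-1}$ cliques, while $G-S\cong K_{n-s}$ contributes $\binom{n-s}{r}$.
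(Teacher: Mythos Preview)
Your argument is correct: the sequential-deletion (equivalently, minimum-index rooting) count cleanly avoids overcounting, and the sharpness example is valid. There is nothing to compare against, however, since the paper does not prove Fact~\ref{fac1} but merely quotes it from F\"{u}redi, Kostochka, and Luo~\cite{Furedi2019}; your write-up supplies exactly the short proof one would expect for this cited fact.
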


\begin{lem}[Xue, Liu and Kang \cite{Xue2022}]\label{le1}
Let property $P$ be $l$-stable and the complete graph $K_n$ has the property $P$.
Suppose that $G$ is a graph of order $n$ with minimum degree at least $\delta$.
If $G$ does not have the property $P$, then there exists an integer $q$ with $\delta\leq q\leq \lfloor\frac{l-1}{2}\rfloor$
such that $G$ has $(n-l+q,q)$-P\'{o}sa property.
\end{lem}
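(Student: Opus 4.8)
The plan is to pass to the $l$-closure $H = C_l(G)$ and extract the Pósa structure from a carefully chosen non-adjacent pair. First I would record the closure reduction. Since $P$ is $l$-stable, the contrapositive of its defining implication says that adding to a graph without $P$ any edge whose endpoints have degree sum at least $l$ yields a graph still without $P$; iterating this over the edges successively added in forming the closure, starting from $G$ (which lacks $P$), gives that $H = C_l(G)$ also lacks $P$. Because $K_n$ has $P$, we conclude $H \neq K_n$, so $H$ is not complete. Moreover the closure only adds edges, whence $\delta(H) \ge \delta(G) \ge \delta$ and $d_G(w) \le d_H(w)$ for every vertex $w$.

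The heart of the argument is the choice of $q$. Among all non-adjacent pairs of $H$, I would select one, say $\{u, v\}$ with $d_H(u) \le d_H(v)$, for which $d_H(u) + d_H(v)$ is maximum, and set $q := d_H(u)$. Since $H$ is closed, no non-adjacent pair has degree sum at least $l$, so $d_H(u) + d_H(v) \le l - 1$. This single inequality yields both range bounds at once: $q = d_H(u) \ge \delta(H) \ge \delta$, while $2q \le d_H(u) + d_H(v) \le l - 1$ gives $q \le \lfloor (l-1)/2 \rfloor$. For the count, observe that every vertex $w \neq v$ not adjacent to $v$ forms a non-adjacent pair $\{w, v\}$, so maximality forces $d_H(w) + d_H(v) \le d_H(u) + d_H(v)$, i.e. $d_H(w) \le q$. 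There are exactly $n - 1 - d_H(v)$ such vertices, and since $d_H(v) \le (l-1) - d_H(u) = l - 1 - q$, their number is at least $n - 1 - (l - 1 - q) = n - l + q$. Thus $H$ has at least $n - l + q$ vertices of degree at most $q$, and as $d_G(w) \le d_H(w)$ these same vertices have $G$-degree at most $q$, so $G$ has the $(n - l + q, q)$-Pósa property.

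I expect the genuine obstacle to be precisely this choice of $q$, because the target $(n - l + q, q)$ builds in a tension: the degree threshold $q$ must lie in $[\delta, \lfloor (l-1)/2 \rfloor]$, yet the guaranteed number $n - l + q$ of low-degree vertices increases with $q$. The naive attempts both fail. Taking $q = \delta(H)$ (a global minimum-degree vertex) keeps $q$ in range but only guarantees one vertex below the threshold. Taking $q = l - 1 - \delta(H)$ captures the entire set of non-neighbors of a minimum-degree vertex but drives $q$ above $(l-1)/2$, out of range. Selecting the non-adjacent pair of \emph{maximum} degree sum is exactly what reconciles the two demands: closedness makes its smaller degree automatically at most $\lfloor (l-1)/2 \rfloor$, while maximality pushes every non-neighbor of the larger-degree endpoint down to degree at most $q$, simultaneously producing the count $n - l + q$. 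Once this pair is in hand, the remaining steps are routine bookkeeping.
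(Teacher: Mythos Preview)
Your argument is correct. The paper does not supply its own proof of this lemma; it is quoted with attribution to Xue, Liu and Kang \cite{Xue2022} and used as a black box. Your route---passing to the $l$-closure $H$, choosing a non-adjacent pair $\{u,v\}$ of maximum degree sum, and setting $q=d_H(u)$---is the standard Bondy--Chv\'atal style extraction, and every step checks out: closedness gives $d_H(u)+d_H(v)\le l-1$, so $q\le\lfloor(l-1)/2\rfloor$; maximality forces all non-neighbours of $v$ (in $H$) to have degree at most $q$; and the count $n-1-d_H(v)\ge n-l+q$ follows immediately. The transfer back to $G$ via $d_G\le d_H$ is clean. There is nothing to compare against in the present paper, but what you have written is a complete and correct proof.
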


The following lemma gives a structural characterization of graphs with P\'{o}sa property.

\begin{lem}[Duan et al. \cite{Duan2020}]\label{le2}
Suppose that $G$ has $n$ vertices and is stable under taking $l$-closure.
Let $q$ be the maximum integer such that $G$ has $(n-l+q,q)$-P\'{o}sa property and $q\leq \lfloor\frac{l-1}{2}\rfloor$.
If $U$ is the set of vertices in $V(G)$ with degree greater than $q$, then $G[U]$ is a complete graph.
\end{lem}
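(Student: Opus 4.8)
The plan is to argue by contraposition. Rewriting the extremal graph as $H^{*}=K_{\delta}\vee(K_{n-(k+2)\delta}+I_{(k+1)\delta})$ (using $n-k\delta-2\delta=n-(k+2)\delta$ and $k\delta+\delta=(k+1)\delta$), I assume $G$ has no spanning tree with leaf degree at most $k$ and aim to conclude that $\rho(G)<\rho(H^{*})$ unless $G\cong H^{*}$. By Kaneko's criterion (Theorem \ref{th6}), the failure of the property yields a nonempty set $S\subseteq V(G)$ with $i(G-S)\geq (k+1)|S|$. Put $s=|S|$ and $i=i(G-S)$. The first observation is that $s\geq\delta$: since $i\geq (k+1)s\geq 2$, the graph $G-S$ has an isolated vertex $v$, and because $d_{G}(v)\geq\delta$ while every neighbour of $v$ lies in $S$, we get $|S|\geq\delta$. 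The same vertex count gives $(k+2)s\leq n$.

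Next I pass to an edge-maximal configuration. Adding all edges inside $S$, all edges between $S$ and $V(G)\setminus S$, and all edges among the non-isolated vertices of $G-S$ leaves $i(G-S)$ unchanged, so $G$ is a spanning subgraph of $G_{s,i}:=K_{s}\vee(K_{n-s-i}+I_{i})$. Moving vertices out of the independent part into the clique only adds edges, so $G_{s,i}$ is a spanning subgraph of $G_{s}:=K_{s}\vee(K_{n-(k+2)s}+I_{(k+1)s})$, obtained by taking $i=(k+1)s$; note $G_{\delta}=H^{*}$. Since adjoining an edge to a connected graph strictly increases the spectral radius (Perron--Frobenius), we get the chain $\rho(G)\leq\rho(G_{s,i})\leq\rho(G_{s})$. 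Thus the theorem reduces to the single inequality $\rho(G_{s})<\rho(G_{\delta})$ for every $s$ with $\delta<s\leq\lfloor n/(k+2)\rfloor$: if this holds then $\rho(G)\geq\rho(H^{*})$ forces equality throughout the chain, whence $s=\delta$, $i=(k+1)\delta$, and finally $G=H^{*}$.

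To compare the values $\rho(G_{s})$ I would use the equitable partition of $G_{s}$ into $V(K_{s})$, the clique $V(K_{n-(k+2)s})$, and the independent set $V(I_{(k+1)s})$, with quotient matrix
\[
B_{s}=\begin{pmatrix} s-1 & n-(k+2)s & (k+1)s \\ s & n-(k+2)s-1 & 0 \\ s & 0 & 0\end{pmatrix},
\]
so $\rho(G_{s})$ is the largest root of $p_{s}(x)=\det(xI-B_{s})$. Setting $\beta_{0}=n-(k+1)\delta-1$, which is a strict lower bound for $\rho(G_{\delta})$ because $G_{\delta}$ contains the clique $K_{n-(k+1)\delta}$, the target becomes showing $p_{s}(\beta_{0})>0$ for all $\delta<s\leq\lfloor n/(k+2)\rfloor$. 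Since $\beta_{0}$ lies to the right of the two small eigenvalues of $B_{s}$, positivity of $p_{s}(\beta_{0})$ places $\beta_{0}$ beyond the Perron root, giving $\rho(G_{s})<\beta_{0}\leq\rho(G_{\delta})$ as required. A bonus of comparing against the fixed threshold $\beta_{0}$, rather than against consecutive members $G_{s+1}$, is that it avoids the weak $s$-dependent clique bound that breaks down once $G_{s}$ degenerates towards a complete split graph.

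The main obstacle is precisely this positivity. Expanding, $p_{s}(\beta_{0})$ is a cubic in $s$ with negative leading coefficient $-(k+1)(k+2)$, so one cannot simply invoke monotonicity in $s$; one must instead control its sign on the entire interval $\delta+1\leq s\leq\lfloor n/(k+2)\rfloor$, with the delicate endpoint being $s=\lfloor n/(k+2)\rfloor$, where the clique part vanishes and $G_{s}$ becomes $K_{s}\vee I_{(k+1)s}$. I expect the hypothesis $n\geq 3(k+2)\delta+2$ to enter exactly here: it forces the clique $K_{n-(k+2)\delta}$ to have order at least $2(k+2)\delta+2$, which is what makes $\beta_{0}$ large enough to dominate $\rho(G_{s})$ uniformly across the range. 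Concretely I would bound $p_{s}(\beta_{0})$ from below by a manageable polynomial in $n$ and $s$, reduce the claim to an explicit inequality, and then verify it on the range using $\delta\leq s\leq n/(k+2)$ and the stated lower bound on $n$; the careful bookkeeping at the two endpoints is where the constant $3(k+2)\delta+2$ should be pinned down.
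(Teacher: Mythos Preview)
Your proposal does not address the stated lemma at all. Lemma~\ref{le2} is a structural fact about $l$-closed graphs: if $q$ is the maximal integer with $q\le\lfloor (l-1)/2\rfloor$ for which $G$ has the $(n-l+q,q)$-P\'osa property, then the vertices of degree greater than $q$ induce a clique. The paper does not even prove this lemma; it is quoted from Duan et al.~\cite{Duan2020}, and its (short) proof rests on the observation that if $u,v\in U$ were nonadjacent then $d_G(u)+d_G(v)\le l-1$, so one of them has degree at most $q$ after all, contradicting $u,v\in U$ together with the maximality of $q$.

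What you have actually written is a proof plan for Theorem~\ref{main3} (the spectral condition for a spanning tree with leaf degree at most $k$). Everything in your proposal---Kaneko's criterion, the set $S$ with $i(G-S)\ge(k+1)|S|$, the embedding $G\subseteq K_{s}\vee(K_{n-(k+2)s}+I_{(k+1)s})$, the quotient matrix $B_{s}$, the comparison of $\rho(G_{s})$ against $\rho(G_{\delta})$---belongs to that theorem and has nothing to do with the $l$-closure/P\'osa statement you were asked to prove. For the record, your approach to Theorem~\ref{main3} differs from the paper's: the paper compares $\rho(G')$ and $\rho(G^{*})$ directly via the bilinear form $\mathbf{y}^{T}(A(G^{*})-A(G'))\mathbf{x}$ using the two Perron vectors, whereas you propose evaluating the characteristic polynomial $p_{s}$ at a fixed threshold $\beta_{0}=n-(k+1)\delta-1$. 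That alternative is a legitimate strategy, but it is still a proof of the wrong statement, and you have not carried out the decisive positivity computation for $p_{s}(\beta_{0})$ on the full range of $s$, so even as a proof of Theorem~\ref{main3} it remains a sketch.
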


\begin{lem}[Dirac \cite{Dirac1952}]\label{le3}
Let $G$ be a graph of order $n$ with minimum degree $\delta$, where $\delta\geq\frac{n}{2}$ and $n\geq 3$. Then $G$ is hamiltonian.
\end{lem}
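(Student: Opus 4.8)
The plan is to prove this classical result by the longest-path (rotation) argument, which is the cleanest route given only a minimum-degree hypothesis. First I would record the easy observation that $\delta \geq \frac{n}{2}$ forces $G$ to be connected: were $G$ disconnected, its smallest component would have at most $\frac{n}{2}$ vertices, so any vertex inside it would have degree at most $\frac{n}{2}-1 < \delta$, a contradiction. Connectivity is needed only at the very end, to rule out vertices lying outside the cycle I construct.

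Next I would take a longest path $P = v_0 v_1 \cdots v_k$ in $G$. Maximality of $P$ immediately yields the crucial structural fact that every neighbor of the endpoint $v_0$, and every neighbor of the endpoint $v_k$, must lie on $P$; otherwise $P$ could be prolonged at one of its ends, contradicting its length. In particular all the neighbors counted by $d(v_0)$ and by $d(v_k)$ sit among the vertices of $P$.

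The heart of the argument, and the one place where the degree bound is indispensable, is to produce an index $i$ with $v_0 v_{i+1} \in E(G)$ and $v_i v_k \in E(G)$ simultaneously. I would set $A = \{\, i : v_0 v_{i+1} \in E(G) \,\}$ and $B = \{\, i : v_i v_k \in E(G) \,\}$, both subsets of $\{0, 1, \ldots, k-1\}$. Since all neighbors of $v_0$ (respectively $v_k$) lie on $P$, we have $|A| = d(v_0) \geq \frac{n}{2}$ and $|B| = d(v_k) \geq \frac{n}{2}$, while the ground set has size $k \leq n-1$. Hence $|A \cap B| \geq |A| + |B| - k \geq n - (n-1) = 1$, so $A \cap B \neq \emptyset$; this inclusion–exclusion count is exactly where $\delta \geq \frac{n}{2}$ is used. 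For such an $i$, reversing the segment $v_{i+1} \cdots v_k$ and inserting the two edges $v_0 v_{i+1}$ and $v_i v_k$ gives a cycle $C = v_0 v_1 \cdots v_i v_k v_{k-1} \cdots v_{i+1} v_0$ on the same $k+1$ vertices as $P$.

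Finally I would argue that $C$ is spanning. If some vertex $w \notin V(C)$ existed, then by connectivity there is an edge joining $V(C)$ to the rest of $G$, and in particular a vertex off $C$ adjacent to a vertex of $C$; attaching it to $C$ and cutting the cycle open produces a path on at least $k+2$ vertices, strictly longer than $P$, contradicting maximality of $P$. Thus $V(C) = V(G)$ and $C$ is a Hamilton cycle, so $G$ is hamiltonian. The only genuine subtlety is the pigeonhole step, with $\delta \geq \frac{n}{2}$ doing all the real work; the assumption $n \geq 3$ is needed merely so that a cycle, rather than a single edge, makes sense.
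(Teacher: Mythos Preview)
Your proof is correct and follows the classical longest-path/rotation argument due to Dirac. Note, however, that the paper does not supply its own proof of this lemma: it is simply quoted as a known result from \cite{Dirac1952} and used as a black box later in the proof of Theorem~\ref{main1}. There is therefore nothing in the paper to compare against; your write-up is exactly the standard textbook proof and would serve perfectly well if one wished to make the paper self-contained at this point.
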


\begin{lem}[Katerinis \cite{Katerinis1985}]\label{le4}
Let $G$ be a graph of order $n$ with $n\geq 4k-5$, and let $k$ be a positive integer and $nk$ be even.
If $\delta(G)\geq\frac{n}{2}$, then $G$ has a $k$-factor.
\end{lem}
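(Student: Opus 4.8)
The plan is to derive this from Tutte's $k$-factor theorem, which states that $G$ has a $k$-factor if and only if for every pair of disjoint vertex sets $S,T\subseteq V(G)$,
$$\theta(S,T):=k|S|+\sum_{v\in T}\bigl(d_{G-S}(v)-k\bigr)-h(S,T)\geq 0,$$
where $h(S,T)$ is the number of components $C$ of $G-S-T$ for which $k|V(C)|+e_G(V(C),T)$ is odd. I would argue by contradiction: if $G$ has no $k$-factor, Tutte's criterion supplies disjoint $S,T$ with $\theta(S,T)<0$. Since $\theta(S,T)\equiv kn\pmod 2$ and $nk$ is even, in fact $\theta(S,T)\leq -2$. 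Among all such violating pairs I would fix one with $|T|$ minimum, and then aim to contradict the lower bounds forced by $\delta(G)\geq \frac{n}{2}$. We may assume $G$ is connected, as $\delta\geq\frac{n}{2}$ guarantees this.

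Two structural reductions drive the argument. First, minimality of $|T|$ forces $d_{G-S}(v)\leq k-1$ for every $v\in T$: if some $v\in T$ had $d_{G-S}(v)\geq k$, a standard exchange argument (its nonnegative contribution $d_{G-S}(v)-k$ is removed while $h$ changes by a controllable amount) yields a violating pair with smaller $|T|$. Combined with $d_{G-S}(v)\geq \delta-|S|\geq \frac{n}{2}-|S|$, this gives $|S|\geq \frac{n}{2}-k+1$ whenever $T\neq\emptyset$, so $k|S|$ is already of order $\frac{kn}{2}$. Second, the minimum-degree hypothesis controls the components of $G-S-T$: every vertex $v$ of a component $C$ satisfies $N_G(v)\subseteq V(C)\cup S\cup T$, whence $|V(C)|\geq \delta-|S|-|T|+1$. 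This caps the number of components, and hence $h(S,T)$, by
$$h(S,T)\leq c(S,T)\leq \frac{n-|S|-|T|}{\delta-|S|-|T|+1}$$
whenever $|S|+|T|\leq\delta$, while for $|S|+|T|>\delta$ one falls back on $h(S,T)\leq c(S,T)\leq n-|S|-|T|$.

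With these tools I would split into cases. If $T=\emptyset$, then $\theta(S,\emptyset)=k|S|-h(S,\emptyset)$; connectivity gives $h=0$ when $S=\emptyset$, and for $S\neq\emptyset$ the component bound (using integrality of $c$) makes $k|S|\geq h(S,\emptyset)$, contradicting $\theta<0$. If $T\neq\emptyset$, I would combine $|S|\geq \frac{n}{2}-k+1$, the estimate $\sum_{v\in T}(d_{G-S}(v)-k)\geq -k|T|$, and the component bound on $h(S,T)$ to force $\theta(S,T)\geq 0$; the order hypothesis $n\geq 4k-5$ is precisely what makes the constants balance in the tightest subcases.

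I expect the main obstacle to be the subcase where $|S|+|T|$ is close to $\delta\approx\frac{n}{2}$, since there the components of $G-S-T$ can shrink to singletons and $h(S,T)$ may be as large as $\Theta(n)$, so the crude bounds do not close. Controlling this regime requires playing the large lower bound $k|S|\gtrsim \frac{kn}{2}$ against the exact count of small odd components and exploiting the threshold $n\geq 4k-5$; making these inequalities close in the boundary regime, rather than the choice of tools, is the delicate part of the proof.
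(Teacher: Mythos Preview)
The paper does not supply its own proof of this lemma: it is quoted verbatim as a result of Katerinis with a citation, and is used as a black box in the proof of Theorem~\ref{main1}(ii). So there is no in-paper argument to compare against.

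Your outline is the standard route to Katerinis' theorem and is sound in structure. The use of Tutte's $k$-factor criterion, the parity step to strengthen $\theta(S,T)<0$ to $\theta(S,T)\leq -2$, the minimality-of-$|T|$ reduction yielding $d_{G-S}(v)\leq k-1$ for $v\in T$, and the consequent lower bound $|S|\geq \tfrac{n}{2}-k+1$ when $T\neq\emptyset$ are all correct and are precisely the ingredients in Katerinis' original argument. Your component-size bound $|V(C)|\geq \delta-|S|-|T|+1$ is also the right tool for controlling $h(S,T)$. You have correctly identified the delicate regime: when $|S|+|T|$ is close to $\delta$, the crude bound $h\leq n-|S|-|T|$ is too weak on its own, and one must exploit the interplay between $k|S|\geq k(\tfrac{n}{2}-k+1)$, the bound $\sum_{v\in T}(d_{G-S}(v)-k)\geq -(k-1)|T|$ (not just $-k|T|$, since $d_{G-S}(v)\geq 0$), and the constraint $n\geq 4k-5$. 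Closing the inequalities there is arithmetic rather than conceptual, but it does require care; your sketch stops short of that computation, which is where the threshold $4k-5$ genuinely enters.
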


Now we are in a position to present the proof of Theorem \ref{main1}. Recall that $R(n,t)$ is a $t$-regular graph on $n$ vertices.

\medskip
\noindent  \textbf{Proof of Theorem \ref{main1}.}
(i) Assume to the contrary that $G$ has no $1$-factor. Let $H=C_{n-1}(G)$. It suffices to prove that $H\cong K_{n-\delta-1}\cup K_{\delta+1}$ ($\delta$ is even) or $K_{\delta}\vee(K_{n-2\delta-1}+I_{\delta+1})$.

By Theorem \ref{th1} (i), $H$ has no $1$-factor.
By Lemma \ref{le1}, there exists an integer $q$ with $\delta\leq q\leq \frac{n}{2}-1$ such that $H$ has
$(q+1,q)$-P\'{o}sa property. Let $q$ be the maximum integer with the above P\'{o}sa property. First, we prove the following claim.

\begin{claim}\label{cla11}
{\rm $q=\delta$.}
\end{claim}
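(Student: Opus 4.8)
The plan is to play the number of $r$-cliques of the closure $H$ against the extremal quantity $\varphi(n,r,\cdot)$ evaluated at the P\'osa parameter $q$, and then exploit convexity of $\varphi(n,r,\cdot)$ in $q$ to push $q$ down to $\delta$. Since $H=C_{n-1}(G)$ arises from $G$ only by adding edges, every $r$-clique of $G$ is also an $r$-clique of $H$, and hence
$$N_{r}(H)\ \geq\ N_{r}(G)\ >\ \max\left\{\varphi(n,r,\delta+1),\ \varphi(n,r,\tfrac{n}{2}-1)\right\}.$$
On the other hand, $H$ has the $(q+1,q)$-P\'osa property and $n\geq 2q+1$ (because $q\leq\frac{n}{2}-1$), so Fact \ref{fac1} applied with $s=q+1$ gives $N_{r}(H)\leq\binom{n-q-1}{r}+(q+1)\binom{q}{r-1}=\varphi(n,r,q)$. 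Combining these two bounds yields $\varphi(n,r,q)>\max\{\varphi(n,r,\delta+1),\varphi(n,r,\frac{n}{2}-1)\}$. Since $q\geq\delta$ by Lemma \ref{le1}, it now suffices to prove that $\varphi(n,r,q)\leq\max\{\varphi(n,r,\delta+1),\varphi(n,r,\frac{n}{2}-1)\}$ holds for every integer $q$ with $\delta+1\leq q\leq\frac{n}{2}-1$: this contradicts the previous inequality and leaves only $q=\delta$. (If $\delta+1>\frac{n}{2}-1$ there is no such $q$, and then $q\in[\delta,\frac{n}{2}-1]=\{\delta\}$ already.)

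To get this last inequality I would show that $\varphi(n,r,\cdot)$ is discretely convex on $[\delta,\frac{n}{2}-1]$, since a convex function on an integer interval attains its maximum at one of the two endpoints. A short computation using Pascal's identity gives the forward difference
$$\varphi(n,r,q+1)-\varphi(n,r,q)\ =\ -\binom{n-q-2}{r-1}+\binom{q}{r-1}+(q+2)\binom{q}{r-2}.$$
Each of the three terms on the right is non-decreasing in $q$ on the relevant range: as $q$ increases, $\binom{n-q-2}{r-1}$ does not increase, so $-\binom{n-q-2}{r-1}$ does not decrease, while $\binom{q}{r-1}$ and $(q+2)\binom{q}{r-2}$ obviously do not decrease. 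The only property of binomial coefficients used here is the monotonicity $\binom{m+1}{j}\geq\binom{m}{j}$, valid for all non-negative integers $m,j$. Therefore $\varphi(n,r,q+1)-\varphi(n,r,q)$ is non-decreasing in $q$, i.e.\ $\varphi(n,r,\cdot)$ is convex, and the desired endpoint bound follows.

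The points requiring a little care are purely bookkeeping. First, one should confirm that all upper binomial indices occurring above stay non-negative throughout $\delta\leq q\leq\frac{n}{2}-1$ --- indeed $n-q-2\geq\frac{n}{2}-1\geq0$ there --- so that no case split on vanishing binomials is needed; second, one should make precise the (elementary) assertion that a discretely convex function on an integer interval is maximized at an endpoint. I expect these, together with the Pascal computation, to be the only substance of the proof; the rest is a direct application of Fact \ref{fac1} and Lemma \ref{le1}. It is worth noting that the improvement over Duan et al.\ \cite{Duan2020} is encapsulated here: the weaker hypothesis $N_r(G)>\varphi(n,r,\delta+1)$ is used, and $\varphi(n,r,\delta+1)<\varphi(n,r,\delta)$ for large $n$ by the forward-difference formula above, yet it still pins down $q=\delta$.
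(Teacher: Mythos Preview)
Your proof is correct and follows the same route as the paper's: bound $N_r(H)$ from above by $\varphi(n,r,q)$ via Fact~\ref{fac1}, bound $N_r(G)$ from below by the hypothesis, and derive a contradiction for $q\in[\delta+1,\frac{n}{2}-1]$. The paper simply asserts the step $\varphi(n,r,q)\leq\max\{\varphi(n,r,\delta+1),\varphi(n,r,\frac{n}{2}-1)\}$ without justification, whereas you supply the discrete convexity argument (correct Pascal computation and termwise monotonicity of the forward difference) that actually proves it; so your version is more complete but not different in spirit.
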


\begin{proof}
Suppose that $\delta+1\leq q\leq \frac{n}{2}-1$. By Fact \ref{fac1}, we have
$$N_{r}(H)\leq {n-q-1\choose r}+(q+1){q\choose r-1}=\varphi(n,r,q).$$
Since $G\subseteq H$, we have $N_{r}(G)\leq\varphi(n,r,q)$. So
$N_{r}(G)\leq{\rm max} \left\{\varphi(n,r,\delta+1), \varphi(n,r,\frac{n}{2}-1)\right\},$
which contradicts the assumption.
\end{proof}

Note that the minimum degree of $G$ is at least $\delta$.
Combining Claim \ref{cla11}, P\'{o}sa property of $H$, and the maximality of $q$,
we know that there are exactly $\delta+1$ vertices of degree $\delta$ in $H$.
Let $X$ be the set of vertices with degree $\delta$ in $H$ and $C=V(H)\setminus X$.
Then $|X|=\delta+1$ and $|C|=n-\delta-1$. Note that the degrees of the vertices in $C$ are greater than $\delta$. By Lemma \ref{le2}, $C$ forms a clique in $H$.

Let $Y=\{v: d_{H}(v)\geq n-\delta-1\}$. Note that $\delta \leq \frac{n}{2}-1$. Then $Y\subseteq C.$
For $u\in X$ and $v\in Y$, we have $d_{H}(u)+d_{H}(v)\geq \delta+(n-\delta-1)=n-1$.
Note that $H$ is an $(n-1)$-closed graph.
Then every vertex of $Y$ is adjacent to all the vertices of $X$, so $H[X,Y]$ forms a complete bipartite graph.
By the definition of $Y,$ we have $d_{H}(v)=n-\delta-2$ for each $v\in C\backslash Y.$
Note that $|C|=n-\delta-1$. Hence $N_{H}(u)\subseteq X\cup Y$ for each $u\in X$.

\begin{claim}\label{cla112}
{\rm $0\leq|Y|\leq \delta$.}
\end{claim}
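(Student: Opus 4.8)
The plan is short: the lower bound $|Y|\ge 0$ holds trivially since $Y$ is a set of vertices, so the content is the inequality $|Y|\le\delta$, which I would obtain directly from the two structural facts already in hand, namely that $H[X,Y]$ is a complete bipartite graph and that every vertex of $X$ has degree exactly $\delta$ in $H$.

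First I would record that $X$ and $Y$ are disjoint: since $\delta\le\frac{n}{2}-1$ we have $\delta<n-\delta-1$, so no vertex of $X$ (which has degree $\delta$) can belong to $Y=\{v:d_{H}(v)\ge n-\delta-1\}$; equivalently, $Y\subseteq C=V(H)\setminus X$, as already observed. Then I fix an arbitrary vertex $u\in X$. Because $H[X,Y]$ is complete bipartite, $u$ is adjacent in $H$ to every vertex of $Y$, and since $Y\cap X=\emptyset$ these are $|Y|$ vertices of $N_{H}(u)$ distinct from $u$. Hence $|Y|\le |N_{H}(u)|=d_{H}(u)=\delta$, the last equality because $X$ is precisely the set of degree-$\delta$ vertices of $H$. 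Combining the two bounds gives $0\le|Y|\le\delta$.

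I do not expect any real obstacle here: all the work has already been done in building, via the $(n-1)$-closure property of $H$, the complete bipartite structure between $X$ and $Y$. The one point worth stating carefully is the disjointness of $X$ and $Y$, which is exactly where the hypothesis $\delta\le\frac{n}{2}-1$ is used; without it a vertex could be counted in both $X$ and $Y$ and the degree estimate would collapse. The purpose of the claim is to set up the subsequent case analysis on the value of $|Y|\in\{0,1,\dots,\delta\}$, which should ultimately pin $H$ down to $K_{n-\delta-1}\cup K_{\delta+1}$ (when $|Y|=0$) or $K_{\delta}\vee(K_{n-2\delta-1}+I_{\delta+1})$ (when $|Y|=\delta$).
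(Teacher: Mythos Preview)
Your proof is correct and follows the same idea as the paper's: both use the complete bipartite structure of $H[X,Y]$ together with $d_H(u)=\delta$ for $u\in X$ to bound $|Y|$ by $\delta$. The paper simply phrases it as a one-line contrapositive (``if $|Y|\ge\delta+1$ then $d_H(u)\ge\delta+1$''), while you spell out the disjointness $Y\subseteq C$ and the neighbor count explicitly; these details were already recorded in the paragraph preceding the claim, so no substantive difference.
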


\begin{proof}
If $|Y|\geq\delta+1$, then $d_{H}(u)\geq \delta+1$ for any $u\in X$, a contradiction.
\end{proof}

Based on Claim \ref{cla112}, we distinguish the following three cases.

\vspace{1.5mm}
\noindent\textbf{Case 1.} $|Y|=0$.
\vspace{1.5mm}

Recall that $|X|=\delta+1$ and $d_{H}(u)=\delta$ for each $u\in X$.
It follows that $H[X]$ forms a complete graph. Hence $H\cong K_{n-\delta-1}\cup K_{\delta+1}$. Note that $n$ is even.
If $\delta$ is odd, then $K_{n-\delta-1}\cup K_{\delta+1}$ has a $1$-factor, a contradiction.
If $\delta$ is even, then $K_{n-\delta-1}\cup K_{\delta+1}$ has no $1$-factor.
Hence $H\cong K_{n-\delta-1}\cup K_{\delta+1}$.

\vspace{1.5mm}
\noindent\textbf{Case 2.} $|Y|=\delta$.
\vspace{1.5mm}

Note that $H[X,Y]$ forms a complete bipartite graph and $d_{H}(u)=\delta$ for $u\in X$. Then $X$ is an independent set.
This means that $H\cong K_{\delta}\vee(K_{n-2\delta-1}+I_{\delta+1})$.
Note that the vertices of $I_{\delta+1}$ are only adjacent to all the vertices of $K_{\delta}$.
It is easy to see that $K_{\delta}\vee(K_{n-2\delta-1}+I_{\delta+1})$ has no $1$-factor.
Hence $H\cong K_{\delta}\vee(K_{n-2\delta-1}+I_{\delta+1})$.

\vspace{1.5mm}
\noindent\textbf{Case 3.} $1\leq|Y|\leq \delta-1$.
\vspace{1.5mm}

Then $\delta\geq 2$. Let $|Y|=s$.
Recall that $H[X,Y]$ forms a complete bipartite graph. Hence $H\cong K_{s}\vee(K_{n-s-\delta-1}+R(\delta+1,\delta-s))$.
Let $F=H[X\cup Y]$. Then $\delta+2 \leq|V(F)|\leq 2\delta$, and hence $\delta\geq \frac{|V(F)|}{2}$.
By Lemma \ref{le3}, $F$ has a Hamilton cycle.
It follows that $H$ has a Hamilton path. Note that $n$ is even. Then $H$ has a $1$-factor, a contradiction.

\vspace{1.5mm}

(ii) Suppose that $G$ has no $k$-factor for $k\geq2$. Let $H=C_{n+2k-4}(G)$. By Theorem \ref{th1} (ii), $H$ has no $k$-factor.
According to Lemma \ref{le1}, there exists an integer $q$ with $\delta\leq q\leq \lfloor\frac{n+2k-5}{2}\rfloor$
such that $H$ has $(q-2k+4,q)$-P\'{o}sa property.
Let $q$ be the maximum one with the above P\'{o}sa property. We prove the following claim.

\begin{claim}\label{cla12}
{\rm $q=\delta$.}
\end{claim}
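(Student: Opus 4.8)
The plan is to run the same argument as for Claim~\ref{cla11} in part~(i), now with the function $\phi$ in place of $\varphi$. Lemma~\ref{le1} already guarantees $q\geq\delta$, so it suffices to exclude $q\geq\delta+1$. I would therefore assume for contradiction that $\delta+1\leq q\leq\lfloor\tfrac{n+2k-5}{2}\rfloor$ and aim to contradict the hypothesis $N_{r}(G)>\max\{\phi(n,r,k,\delta+1),\phi(n,r,k,\lfloor\tfrac{n+2k-5}{2}\rfloor)\}$ of Theorem~\ref{main1}~(ii).

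First I would record the arithmetic needed to invoke Fact~\ref{fac1} on $H$. Write $s:=q-2k+4$ for the size parameter of the P\'{o}sa property of $H$ supplied by Lemma~\ref{le1}. Then $s\geq 2$ since $q\geq\delta\geq 2k-2$, and $n\geq s+q$ since $q\leq\tfrac{n+2k-5}{2}$ forces $2q-2k+4\leq n-1$. Hence Fact~\ref{fac1} applies and yields
$$N_{r}(H)\leq\binom{n-s}{r}+s\binom{q}{r-1}=\binom{n-q+2k-4}{r}+(q-2k+4)\binom{q}{r-1}=\phi(n,r,k,q).$$
Because $H=C_{n+2k-4}(G)$ is obtained from $G$ by adding edges, $G$ is a spanning subgraph of $H$, so $N_{r}(G)\leq N_{r}(H)\leq\phi(n,r,k,q)$.

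To finish, I then need $\phi(n,r,k,q)\leq\max\{\phi(n,r,k,\delta+1),\phi(n,r,k,\lfloor\tfrac{n+2k-5}{2}\rfloor)\}$ for every integer $q$ with $\delta+1\leq q\leq\lfloor\tfrac{n+2k-5}{2}\rfloor$; combined with the previous display this contradicts the hypothesis and forces $q=\delta$. The natural route is to show that, for fixed $n,r,k$, the map $q\mapsto\phi(n,r,k,q)$ is convex in $q$ on this range (equivalently, that its second difference is nonnegative), since a convex function on a closed interval attains its maximum at an endpoint. Here $\binom{n-q+2k-4}{r}$ is convex in $q$ because $\binom{\cdot}{r}$ is convex in its upper argument and that argument depends affinely on $q$, while $(q-2k+4)\binom{q}{r-1}$, after expanding the binomial coefficient, is a polynomial in $q$ whose second difference is readily checked to be nonnegative in the relevant range; summing, $\phi$ is convex.

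The one genuinely computational point, and the main obstacle, is precisely this convexity (second-difference) verification for $\phi$; everything else is a direct transcription of the proof of Claim~\ref{cla11}, using closure theory (Theorem~\ref{th1}~(ii)), Lemma~\ref{le1}, and Fact~\ref{fac1}. The same estimate is also what makes ``the $\max$ of the two endpoint values'' the right form of the hypothesis, and it is the ingredient that will let one later pick out which of $q=\delta+1$ and $q=\lfloor\tfrac{n+2k-5}{2}\rfloor$ is the governing case.
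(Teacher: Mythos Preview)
Your proposal is correct and follows exactly the paper's approach: assume $\delta+1\le q\le\lfloor\tfrac{n+2k-5}{2}\rfloor$, apply Fact~\ref{fac1} to $H$ to obtain $N_r(G)\le N_r(H)\le\phi(n,r,k,q)$, and then bound this by the maximum of the two endpoint values to contradict the hypothesis. The only difference is that you make explicit (and sketch a justification for) the convexity of $q\mapsto\phi(n,r,k,q)$, as well as the verification $n\ge s+q$ needed for Fact~\ref{fac1}; the paper simply asserts the endpoint bound without comment, so your write-up is in fact more careful on this point.
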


\begin{proof}
Assume that $\delta+1\leq q\leq \lfloor\frac{n+2k-5}{2}\rfloor$. By Fact \ref{fac1}, we have
$$N_{r}(H)\leq {n-q+2k-4\choose r}+(q-2k+4){q\choose r-1}=\phi(n,r,k,q).$$
Note that $G\subseteq H$. It follows that $N_{r}(G)\leq{\rm max} \left\{\phi(n,r,k,\delta+1), \phi(n,r,k,\lfloor\frac{n+2k-5}{2}\rfloor)\right\},$
which contradicts the assumption.
\end{proof}

By Claim \ref{cla12}, P\'{o}sa property of $H$, and the maximality of $q$, there are exactly $\delta-2k+4$ vertices of degree $\delta$ in $H$.
Let $X$ be the set of vertices with degree $\delta$ in $H$ and $C=V(H)\setminus X$.
Then $|X|=\delta-2k+4$ and $|C|=n-\delta+2k-4$. By Lemma \ref{le2}, $C$ forms a clique in $H$.

Let $Y=\{v: d_{H}(v)\geq n-\delta+2k-4\}$. Note that $\delta \leq \lfloor\frac{n+2k-5}{2}\rfloor$.
Then we have $n-\delta+2k-4\geq \delta+1$, so $Y\subseteq C.$
For $u\in X$ and $v\in Y$, we have $d_{H}(u)+d_{H}(v)\geq \delta+(n-\delta+2k-4)=n+2k-4$.
Note that $H$ is an $(n+2k-4)$-closed graph.
Then every vertex of $Y$ is adjacent to all the vertices of $X$, so $H[X,Y]$ forms a complete bipartite graph.
Note that $|C|=n-\delta+2k-4$ and $C$ is a clique. Combining the definition of $Y,$ we have $d_{H}(v)=n-\delta+2k-5$ for each $v\in C\backslash Y.$
Hence $N_{H}(u)\subseteq X\cup Y$ for each $u\in X$.

\begin{claim}\label{cla13}
{\rm $2k-3\leq|Y|\leq \delta$.}
\end{claim}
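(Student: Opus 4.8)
The plan is to read off both inequalities from the regularity of $H[X]$. Recall from the paragraph preceding the claim that $N_{H}(u)\subseteq X\cup Y$ for every $u\in X$, that $H[X,Y]$ is a complete bipartite graph, and that $d_{H}(u)=\delta$ for each $u\in X$. Consequently every $u\in X$ is adjacent to all $|Y|$ vertices of $Y$, to no vertex outside $X\cup Y$, and hence to exactly $\delta-|Y|$ vertices of $X$. Thus $H[X]$ is a $(\delta-|Y|)$-regular graph on $|X|=\delta-2k+4$ vertices. Note that $X\neq\varnothing$: the hypothesis $\delta\geq 2k-2$ forces $|X|=\delta-2k+4\geq 2$, so speaking of $H[X]$ as a regular graph is legitimate.

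Now the claim is immediate from the elementary fact that the common degree $d$ of a $d$-regular graph on $m$ vertices satisfies $0\leq d\leq m-1$. Taking $d=\delta-|Y|$ and $m=\delta-2k+4$ gives $0\leq\delta-|Y|\leq\delta-2k+3$, that is, $2k-3\leq|Y|\leq\delta$. Equivalently, one can dispatch the two bounds separately: if $|Y|\geq\delta+1$ then $d_{H}(u)\geq|Y|\geq\delta+1$ for any $u\in X$, contradicting $d_{H}(u)=\delta$; and if $|Y|\leq 2k-4$ then $H[X]$ would be $(\delta-|Y|)$-regular with $\delta-|Y|\geq\delta-2k+4=|X|>|X|-1$, which is impossible for a simple graph on $|X|$ vertices.

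I do not expect a real obstacle in this step; it is short bookkeeping. The only points requiring care are (a) confirming $X\neq\varnothing$, which uses $\delta\geq 2k-2$, and (b) the counting that splits $d_{H}(u)$ into its $X$- and $Y$-parts, both of which rest on facts already proved above the claim. The genuinely substantive work — showing that for each admissible value of $|Y|$ in the interval $[2k-3,\delta]$ the graph $H$ nonetheless admits a $k$-factor, contradicting the choice of $G$ — is deferred to the case analysis following the claim, exactly as in part (i) (there using Dirac's theorem and Lemma \ref{le4} for the dense pieces).
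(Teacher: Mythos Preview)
Your proof is correct and essentially identical to the paper's: the upper bound via $d_H(u)\geq|Y|$ for $u\in X$ and the lower bound via $N_H(u)\cup\{u\}\subseteq X\cup Y$ (which you repackage as $(\delta-|Y|)$-regularity of $H[X]$) are exactly the two steps the paper carries out. Your explicit check that $X\neq\varnothing$ using $\delta\geq 2k-2$ is a small clarification the paper leaves implicit.
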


\begin{proof}
In fact, if $|Y|\geq\delta+1$, then $d_{H}(u)\geq \delta+1$ for each $u\in X$, a contradiction.
Note that $d_{H}(u)=\delta$ and $N_{H}(u)\subseteq X\cup Y$ for each $u\in X$.
Then $|X\cup Y|\geq \delta+1$, so $|Y|\geq\delta+1-|X|=\delta+1-(\delta-2k+4)=2k-3.$
Therefore, $2k-3\leq|Y|\leq \delta$.
\end{proof}

Let $|Y|=s$. By Claim \ref{cla13}, we have $2k-3\leq s\leq \delta$. Note that $k\geq 2$.

\vspace{1.5mm}
\noindent\textbf{Case 1.} $k=2$.
\vspace{1.5mm}

Then $1\leq s\leq \delta$ and $|X|=\delta-2k+4=\delta\geq 2k-2=2$.
Obviously, $H\cong K_{s}\vee (K_{n-s-\delta}+R(\delta,\delta-s))$.
Let $F=H[X\cup Y]$. Then $F\cong K_{s}\vee R(\delta,\delta-s)$ and $|V(F)|=\delta+s\geq 3$.
Clearly, $F\cup K_{n-s-\delta}$ is a spanning subgraph of $H$.
Note that
$$\delta(F)=\min\{\delta, \delta+s-1\}=\delta=\frac{2\delta}{2}\geq \frac{\delta+s}{2}=\frac{|V(F)|}{2}.$$
By Lemma \ref{le3}, $F$ is hamiltonian, and hence $F$ has a $2$-factor.
Note that $n\geq 2\delta+k+1= 2\delta+3$. Then $n-s-\delta\geq (2\delta+3)-\delta-\delta=3,$ so $K_{n-\delta-s}$ has a $2$-factor.
Therefore, $F\cup K_{n-s-\delta}$ has a $2$-factor. It follows that $H$ has a $2$-factor, a contradiction.

\vspace{1.5mm}
\noindent\textbf{Case 2.} $k\geq 3$.
\vspace{1.5mm}

Note that $2k-3\leq s\leq \delta$. We divide the proof into two cases.

\vspace{1.5mm}
\noindent\textbf{Case 2.1.} $s=2k-3$.
\vspace{1.5mm}

Then $H\cong K_{2k-3}\vee (K_{n-\delta-1}+K_{\delta-2k+4})$.
Note that $K_{\delta+1}\cup K_{n-\delta-1}$ is a spanning subgraph of $H$.
If $\delta$ is odd, then $(\delta+1)k$ is even, and hence $K_{\delta+1}$ has a $k$-factor.
Since $n\geq 2\delta+k+1$, $n-\delta-1\geq (2\delta+k+1)-\delta-1=\delta+k> k+1$.
Note that $nk$ is even. Then $(n-\delta-1)k$ is even, and hence $K_{n-\delta-1}$ has a $k$-factor.
It follows that $K_{\delta+1}\cup K_{n-\delta-1}$ has a $k$-factor, so $H$ has a $k$-factor, a contradiction.

\begin{figure}
\centering
\includegraphics[width=0.95\textwidth]{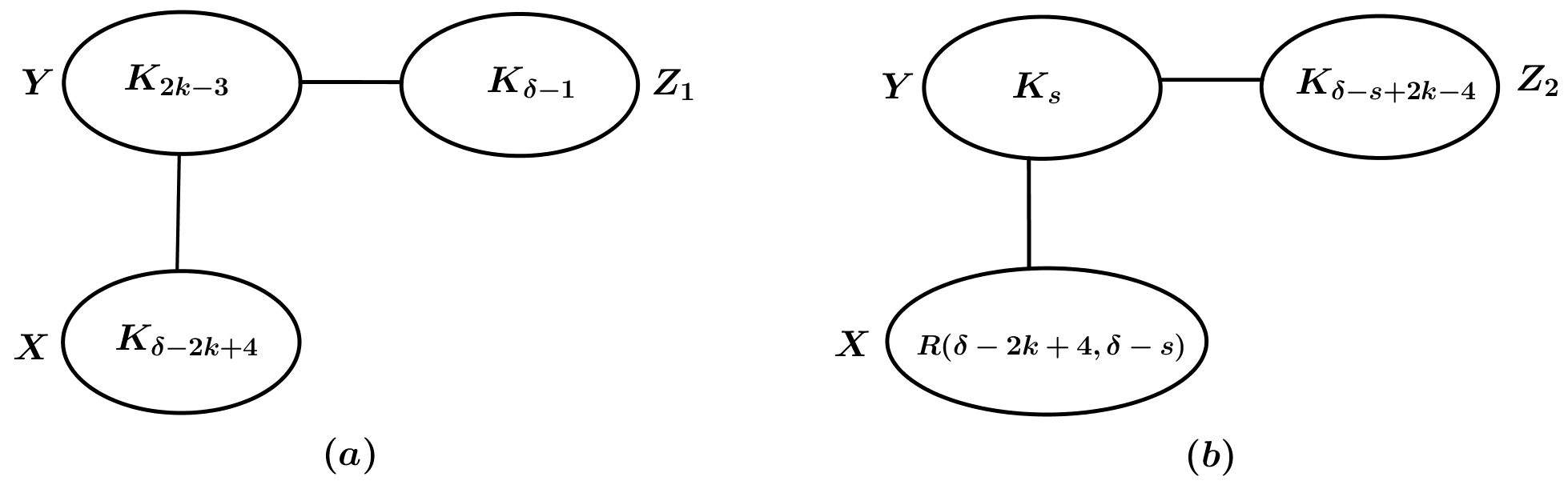}\\
\caption{Graphs $K_{2k-3}\vee (K_{\delta-1}+K_{\delta-2k+4})$ and $K_{s}\vee (K_{\delta-s+2k-4}+R(\delta-2k+4, \delta-s)).$
}\label{fig1}
\end{figure}

Next we consider $\delta$ is even. By Claim \ref{cla13}, we have $\delta\geq 2k-3$, and hence $\delta\geq 2k-2$.
Recall that $n-\delta-1\geq \delta+k>\delta-1$.
Now we construct a proper subgraph $L= H[X\cup Y\cup Z_{1}]$ with $2\delta$ vertices in $H$, where
$X=V(K_{\delta-2k+4})$, $Y=V(K_{2k-3})$ and $Z_{1}=V(K_{\delta-1})\subset V(K_{n-\delta-1})$.
Then $L\cong K_{2k-3}\vee (K_{\delta-1}+K_{\delta-2k+4})$ (see Fig. \ref{fig1}$(a)$).
Note that $L\cup K_{n-2\delta}$ is a spanning subgraph of $H$.
It is easy to see that $|V(L)|=2\delta\geq 2(2k-2)=4k-4$ and
$$\delta(L)=\min\{\delta, 2\delta-1, \delta+2k-5\}=\delta=\frac{|V(L)|}{2}.$$
Note that $|V(L)|k$ is even. By Lemma \ref{le4}, $L$ has a $k$-factor.
Recall that $n-2\delta\geq k+1$. Since $nk$ is even, $(n-2\delta)k$ is even. Then $K_{n-2\delta}$ has a $k$-factor,
so $L\cup K_{n-2\delta}$ has a $k$-factor. It follows that $H$ has a $k$-factor, a contradiction.

\vspace{1.5mm}
\noindent\textbf{Case 2.2.} $2k-2\leq s\leq \delta$.
\vspace{1.5mm}

Then $H\cong K_{s}\vee (K_{n-s-\delta+2k-4}+R(\delta-2k+4, \delta-s))$.
Note that $$n-s-\delta+2k-4\geq (2\delta+k+1)-s-\delta+2k-4=\delta-s+3k-3>\delta-s+2k-4.$$
Let $X=V(R(\delta-2k+4, \delta-s))$, $Y=V(K_{s})$, and $Z_2=V(K_{\delta-s+2k-4})\subset V(K_{n-s-\delta+2k-4})$.
Next we construct a proper subgraph $M=H[X\cup Y\cup Z_2]$ of $H$.
Then $|V(M)|=2\delta$ and $M\cong K_{s}\vee (K_{\delta-s+2k-4}+R(\delta-2k+4, \delta-s))$ (see Fig. \ref{fig1}$(b)$).
Obviously, $M\cup K_{n-2\delta}$ is a spanning subgraph of $H$ and $n-2\delta\geq k+1$.
Note that $|V(M)|=2\delta\geq 2(2k-2)=4k-4$ and
$$\delta(M)=\min\{\delta, 2\delta-1, \delta+2k-5\}=\delta=\frac{|V(M)|}{2}.$$
Clearly, $|V(M)|k$ is even. By Lemma \ref{le4}, $M$ has a $k$-factor. Recall that $K_{n-2\delta}$ has a $k$-factor.
Hence $M\cup K_{n-2\delta}$ has a $k$-factor.
It follows that $H$ has a $k$-factor, a contradiction. \hspace*{\fill}$\Box$

We use the following lemma to prove Corollary \ref{main4}.

\begin{lem}[Hong, Shu and Fang \cite{Hong2001}, Nikiforov \cite{Nikiforov2002}]\label{le41}
Let $G$ be a graph with minimum degree $\delta.$ Then
$$\rho(G)\leq \frac{\delta-1}{2}+\sqrt{2e(G)-\delta n+\frac{(\delta+1)^{2}}{4}}.$$
\end{lem}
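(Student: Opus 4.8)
The plan is to recast the asserted inequality as a quadratic bound on $\rho(G)$ and then prove that quadratic bound by combining the Perron eigenvector with an exact averaging identity. Writing $\rho=\rho(G)$ and $m=e(G)$, I would first note, by completing the square, that the claimed bound $\rho\le\frac{\delta-1}{2}+\sqrt{2m-\delta n+\frac{(\delta+1)^2}{4}}$ is equivalent to $\left(\rho-\frac{\delta-1}{2}\right)^2\le 2m-\delta n+\frac{(\delta+1)^2}{4}$, and hence, since $\delta+\frac{(\delta-1)^2}{4}=\frac{(\delta+1)^2}{4}$, to the factored inequality $(\rho-\delta)(\rho+1)\le 2m-\delta n=\sum_{v\in V(G)}(d_{G}(v)-\delta)$. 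Both sides are nonnegative, because $\rho\ge\frac{2m}{n}\ge\delta$ and each $d_{G}(v)\ge\delta$, and both vanish for regular graphs, matching the expected equality case; so this reformulation is the natural target.

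Next I would introduce the Perron eigenvector $\mathbf{x}=(x_v)_{v\in V(G)}$ of $A(G)$, taken nonnegative, and set $X=\sum_{v}x_v>0$. Two facts drive the argument. First, left-multiplying $A(G)\mathbf{x}=\rho\mathbf{x}$ by the all-ones vector $\mathbf{1}$ gives the exact identity $\sum_{v}(d_{G}(v)-\delta)x_v=(\rho-\delta)X$, since $\mathbf{1}^{\top}A(G)$ is the degree row vector $(d_{G}(v))_{v}$. Second, a pointwise bound holds at every vertex: because $N_{G}(v)\subseteq V(G)\setminus\{v\}$ and $\mathbf{x}\ge 0$, we get $\rho x_v=\sum_{w\sim v}x_w\le X-x_v$, that is, $(\rho+1)x_v\le X$ for all $v$. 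Combining them, I would multiply the identity by $(\rho+1)$ and use that each weight $a_v:=d_{G}(v)-\delta\ge 0$ to estimate $(\rho-\delta)(\rho+1)X=\sum_v a_v\,(\rho+1)x_v\le\sum_v a_v X=X\sum_v(d_{G}(v)-\delta)$; dividing by $X>0$ yields exactly $(\rho-\delta)(\rho+1)\le 2m-\delta n$, after which undoing the square-completion and taking the nonnegative square root finishes the proof.

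The crux, and the step I expect to be the genuine obstacle, is obtaining an estimate that retains $\rho$ rather than the maximum degree: the naive two-step walk count at a vertex $u$ of maximum eigenvector weight only gives $\rho^2\le\sum_{w\sim u}d_{G}(w)$, and feeding this into a degree count leaves a stubborn surplus term $(\delta-1)(d_{G}(u)-\rho)\ge 0$ that cannot be absorbed. The pointwise inequality $(\rho+1)x_v\le X$, valid simultaneously at \emph{every} vertex rather than only at the maximizer of $\mathbf{x}$, is precisely what eliminates this surplus, since it lets the nonnegative weights $a_v$ slide past $(\rho+1)x_v$ uniformly. Verifying that this inequality indeed holds at all vertices and that the averaging identity is exact are the two points I would check most carefully; everything else is the routine algebra of the square-completion.
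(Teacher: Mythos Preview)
The paper does not prove this lemma at all; it is quoted verbatim as a known result of Hong--Shu--Fang and Nikiforov and then applied as a black box in the proof of Corollary~\ref{main4}. So there is no ``paper's own proof'' to compare against.

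Your proposed argument is correct and self-contained. The algebraic reduction to $(\rho-\delta)(\rho+1)\le 2m-\delta n$ is valid (and your check that $\rho\ge\delta$ via $\rho\ge 2m/n\ge\delta$ legitimises the squaring step). The two ingredients you isolate are exactly right: the identity $\sum_v(d_G(v)-\delta)x_v=(\rho-\delta)X$ comes from $\mathbf{1}^{\top}A(G)\mathbf{x}=\rho X$, and the pointwise bound $(\rho+1)x_v\le X$ follows from $\rho x_v=\sum_{w\sim v}x_w\le X-x_v$ because $\mathbf{x}\ge 0$. Multiplying the first by $(\rho+1)$ and applying the second termwise with the nonnegative weights $d_G(v)-\delta$ yields the factored inequality. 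The only point worth a remark is the disconnected case: there one takes $\mathbf{x}$ to be the Perron vector of a component realising $\rho(G)$, extended by zeros, and then both the identity and the pointwise bound (trivially, where $x_v=0$) still hold. This is essentially Nikiforov's original proof, so while your write-up supplies more than the present paper does, it is faithful to one of the cited sources.
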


\medskip
\noindent  \textbf{Proof of Corollary \ref{main4}.}
(i) By the assumption and Lemma \ref{le41}, we have
  $$\frac{\delta-1}{2}+\sqrt{n^{2}-(3\delta+5)n+\frac{13\delta^{2}+23\delta+41}{4}}<\rho(G)\leq\frac{\delta-1}{2}+\sqrt{2e(G)-\delta n+\frac{(\delta+1)^{2}}{4}}.$$
By direct calculation, we obtain
$$e(G)>\frac{n^{2}}{2}-(\delta+\frac{5}{2})n+\frac{3}{2}\delta^{2}+\frac{11}{2}\delta+5=\varphi(n,2,\delta+1).$$
By Corollary \ref{cor1}, $G$ has a $1$-factor unless
$C_{n-1}(G)\cong K_{n-\delta-1}\cup K_{\delta+1}$ ($\delta$ is even) or $K_{\delta}\vee(K_{n-2\delta-1}+I_{\delta+1})$.

\vspace{1.5mm}

(ii) By Lemma \ref{le41}, we have $$\rho(G)\leq\frac{\delta-1}{2}+\sqrt{2e(G)-\delta n+\frac{(\delta+1)^{2}}{4}}.$$
Note that $\rho(G)>\frac{\delta-1}{2}+\sqrt{n^{2}-(3\delta-4k+11)n+\frac{13\delta^{2}-(32k-94)\delta+16k^{2}-104k+161}{4}}.$ Then
$$e(G)>\frac{n^{2}}{2}-(\delta-2k+\frac{11}{2})n+\frac{3}{2}\delta^{2}-(4k-\frac{23}{2})\delta+2k^{2}-13k+20=\phi(n,2,k,\delta+1).$$
By Corollary \ref{cor1}, $G$ has a $k$-factor. \hspace*{\fill}$\Box$

\section{Proof of Theorem \ref{main2}}

Before providing our main theorem, we first show that an $m$-connected and $(n-km+2m-1)$-closed graph $G$ contains a large clique
if its number of edges is sufficiently large.
The technique for proving Lemma \ref{le5} is mainly motivated by that used in \cite{Li2016}.

\begin{lem}\label{le5}
Let $H$ be an $m$-connected and $(n-km+2m-1)$-closed graph of order $n\geq (7k-2)m+4$, where $m$ and $k$ are integers with $m\geq 1$ and $k\geq2$.
If $$e(H)\geq {n-(k-1)m-2\choose 2}+(k-1)m^{2}+(k+1)m+3,$$ then $\omega(H)\geq n-(k-1)m-1$.
\end{lem}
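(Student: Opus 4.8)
The plan is to argue by contradiction: assume $\omega(H)\le n-(k-1)m-2$, and combine an upper bound on $e(H)$ obtained from a maximum clique with a lower bound on $\omega(H)$ obtained from the $(n-km+2m-1)$-closedness of $H$. Throughout set $l=n-km+2m-1$, so that every non-edge $uv$ of $H$ satisfies $d_H(u)+d_H(v)\le l-1$.

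For the upper bound, fix a maximum clique $K$ of $H$, write $\omega:=\omega(H)=|K|$, and let $R:=V(H)\setminus K$. Since $K$ is maximum, each $v\in R$ has a non-neighbour $z\in K$, and $z$ is adjacent to the remaining $\omega-1$ vertices of $K$, so $d_H(z)\ge\omega-1$; applying $l$-closedness to the non-edge $vz$ gives $d_H(v)\le l-1-(\omega-1)=l-\omega$ for every $v\in R$. Splitting $e(H)=\binom{\omega}{2}+e(K,R)+e(R)$ and using $e(K,R)+e(R)\le 2e(R)+e(K,R)=\sum_{v\in R}d_H(v)\le(n-\omega)(l-\omega)$, I get
$$ e(H)\ \le\ f(\omega)\ :=\ \binom{\omega}{2}+(n-\omega)(l-\omega). $$
A direct computation shows $f\big(n-(k-1)m-2\big)=\binom{n-(k-1)m-2}{2}+(k-1)m^{2}+(k+1)m+2$, which is exactly one less than the hypothesised lower bound on $e(H)$. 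Since $f$ is an upward parabola in $\omega$ whose vertex, under $n\ge(7k-2)m+4$, lies strictly to the left of $n-(k-1)m-2$, we have $f(\omega)\le f\big(n-(k-1)m-2\big)$ — and hence $e(H)<f(\omega)$ is impossible — as soon as $\omega$ lies in an interval $[\omega_{0},\,n-(k-1)m-2]$ whose left endpoint $\omega_{0}$ is linear in $n$ (of order $n/3$). It therefore suffices to prove $\omega(H)\ge\omega_{0}$.

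For that lower bound I would invoke, as in the approach of Li and Ning, the fact that in an $l$-closed graph the set $W=\{v\in V(H):d_H(v)\ge\lceil l/2\rceil\}$ is a clique, because two non-adjacent vertices of $W$ would have degree sum at least $2\lceil l/2\rceil\ge l$, contrary to $l$-closedness; hence $\omega(H)\ge|W|=n-|V(H)\setminus W|$, and it is enough to bound $|V(H)\setminus W|$. The hypothesis $e(H)\ge\binom{n-(k-1)m-2}{2}+(k-1)m^{2}+(k+1)m+3$ makes $e(\overline H)=\binom{n}{2}-e(H)$ linear in $n$, while every $v\in V(H)\setminus W$ has $d_{\overline H}(v)=n-1-d_H(v)\ge n-\lceil l/2\rceil$, which is also linear in $n$; double-counting the edges of $\overline H$ then gives $|V(H)\setminus W|\le\dfrac{2\big(\binom{n}{2}-e(H)\big)}{\,n-\lceil l/2\rceil\,}$, a quantity bounded by a constant $c_{k,m}$ depending only on $k$ and $m$. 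Thus $\omega(H)\ge n-c_{k,m}$, which exceeds $\omega_{0}$, and together with $e(H)\le f(\omega)$ this contradicts the hypothesised lower bound on $e(H)$, completing the argument.

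I expect the main obstacle to be purely quantitative: the double-counting step is slightly lossy, so to conclude $n-c_{k,m}\ge\omega_{0}$ under the single assumption $n\ge(7k-2)m+4$ one has to keep all the inequalities sharp — in particular to handle the floor in $\lceil l/2\rceil$ and the small values of $m$ and $k$ carefully — rather than merely tracking orders of magnitude. Once $\omega(H)\ge\omega_{0}$ is in hand, everything else is the elementary parabola estimate for $f$ together with bookkeeping of the explicit constant $(k-1)m^{2}+(k+1)m+3$.
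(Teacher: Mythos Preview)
Your upper bound $e(H)\le f(\omega)=\binom{\omega}{2}+(n-\omega)(l-\omega)$ for a maximum clique of size $\omega$ is exactly the paper's Case~2 bound (their Claim~2), and your computation $f\big(n-(k-1)m-2\big)=\binom{n-(k-1)m-2}{2}+(k-1)m^{2}+(k+1)m+2$ is correct. So for the large-clique range your argument coincides with the paper's.

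The difference is in how you handle small $\omega$. The paper also introduces the high-degree clique $W=\{v:d_H(v)\ge l/2\}$, but rather than bounding $|W|$ from below via your complement double-counting, it takes $C$ to be a \emph{maximum} clique containing $W$ and, in Case~1 ($|C|\le n/3+(k-1)m+1$), exploits \emph{two} bounds simultaneously for $u\notin C$: $d_H(u)\le (l-1)/2$ (since $W\subseteq C$) and $d_C(u)\le r-1$. Writing $e(H)=\binom{r}{2}+\tfrac12\sum_{u\notin C}(d_C(u)+d_H(u))$, the quadratic terms in $r$ cancel and one gets an expression linear in $r$, which is then maximized at $r=n/3+(k-1)m+1$ and shown to fall below the hypothesis precisely when $n\ge(7k-2)m+4$. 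This direct edge count is what pins down the constant $(7k-2)m+4$.

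Your complement double-counting is a legitimate alternative, and numerically it does squeak through at the threshold (for $k=2$, $m=1$, $n=16$ one gets $|V\setminus W|\le 70/8=8.75$, hence $|W|\ge 8$, which is exactly the $\omega_0$ you need). But the asymptotic estimate $|V\setminus W|\lesssim 4\bigl((k-1)m+2\bigr)$ you suggest is too coarse: plugging $c_{k,m}\approx 4(k-1)m+8$ into $n-c_{k,m}\ge\omega_0\approx n/3$ gives roughly $n\ge(6k-6)m+12$, which is not implied by $n\ge(7k-2)m+4$ for small $k,m$. To make your route work with the stated constant you would have to carry the full expression for $2e(\overline H)/(n-\lceil l/2\rceil)$ through the inequality rather than pass to a limit --- doable, but not cleaner than the paper's linear-in-$r$ bound. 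If you want the exact threshold, I would replace the double-counting step by the paper's Case~1 estimate.
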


\begin{proof}
Suppose that $H$ is an $m$-connected and $(n-km+2m-1)$-closed graph of order $n\geq (7k-2)m+4$, where $m\geq 1$ and $k\geq2$.
Then any two vertices of degree at least $\frac{n-(k-2)m-1}{2}$ must be adjacent in $H$.
Let $C$ be the vertex set of a maximum clique of $H$ which contains all the vertices of degree at least $\frac{n-(k-2)m-1}{2},$
and let $F$ be the subgraph of $H$ induced by $V(H)\setminus C.$ Let $|C|=r$. Then $|V(F)|=n-r$.

\vspace{1.5mm}
\noindent\textbf{Case 1.} $1 \leq r\leq \frac{n}{3}+(k-1)m+1.$
\vspace{1.5mm}

Note that $e(H[C])={r\choose 2}$, $e(C, V(F))=\sum_{u\in V(F)}d_{C}(u)$,
and
 $$e(F)=\frac{\sum_{u\in V(F)}d_{H}(u)-\sum_{u\in V(F)}d_{C}(u)}{2}.$$
We first prove the following claim.

\begin{claim}\label{cla1}
{\rm $d_{H}(u)\leq \frac{n-(k-2)m-2}{2}$ and $d_{C}(u)\leq r-1$ for each $u\in V(F)$.}
\end{claim}

\begin{proof}
Assume that there exists a vertex $u\in V(F)$ such that $d_{H}(u)\geq\frac{n-(k-2)m-1}{2}$. Then $d_{H}(u)+d_{H}(v)\geq n-(k-2)m-1$ for each $v\in C$.
Note that $H$ is an $(n-km+2m-1)$-closed graph.
Then the vertex $u$ is adjacent to every vertex of $C,$ and hence $C\cup\{u\}$ is a larger clique, which contradicts the maximality of $C$.
Suppose that there exists a vertex $u\in V(F)$ such that $d_{C}(u)\geq r$. Note that $|C|=r$.
Then $d_{C}(u)=r,$ so $u$ is adjacent to each vertex of $C.$
It follows that $C\cup\{u\}$ is a larger clique, a contradiction.
\end{proof}

By Claim \ref{cla1}, we have
\begin{eqnarray*}
e(H)&=&e(H[C])+e(C,V(F))+e(F)\\
&=&{r\choose 2}+\frac{\sum_{u\in V(F)}d_{C}(u)+\sum_{u\in V(F)}d_{H}(u)}{2}\\
&\leq&{r\choose 2}+\frac{(r-1)(n-r)}{2}+\frac{(n-km+2m-2)(n-r)}{4}\\
&=&\frac{1}{4}(n+km-2m+2)r+\frac{n^{2}}{4}-\frac{1}{4}(km-2m+4)n\\
&\leq&\frac{1}{4}(n+km-2m+2)(\frac{n}{3}+km-m+1)+\frac{n^{2}}{4}-\frac{1}{4}(km-2m+4)n\\
&=&\frac{n^{2}}{3}+\frac{1}{12}(km+m-7)n+\frac{1}{4}(k^{2}-3k+2)m^{2}+\frac{1}{4}(3k-4)m+\frac{1}{2}\\
&<& {n-(k-1)m-2\choose 2}+(k-1)m^{2}+(k+1)m+3
\end{eqnarray*}
for $n\geq (7k-2)m+4$. This contradicts the assumption that $e(H)\geq {n-(k-1)m-2\choose 2}+(k-1)m^{2}+(k+1)m+3.$

\vspace{1.5mm}
\noindent\textbf{Case 2.} $\frac{n}{3}+(k-1)m+1 <r\leq n-(k-1)m-2.$
\vspace{1.5mm}

Note that $$e(H[C])={r\choose 2} ~~\mbox{and}~~ e(C,V(F))+e(F)\leq \sum_{u\in V(F)}d_{H}(u).$$

\begin{claim}\label{cla2}
{\rm $d_{H}(u)\leq n-(k-2)m-r-1$ for each $u\in V(F)$.}
\end{claim}

\begin{proof}
Suppose to the contrary that $d_{H}(u)\geq n-(k-2)m-r$ for some vertex $u\in V(F)$.
Then $d_{H}(u)+d_{H}(v)\geq (n-km+2m-r)+(r-1)=n-(k-2)m-1$ for each $v\in C$. Note that $H$ is $(n-km+2m-1)$-closed.
Then the vertex $u$ is adjacent to every vertex of $C$. This implies that $C\cup\{u\}$ is a larger clique, a contradiction.
\end{proof}

By Claim \ref{cla2}, we have
\begin{eqnarray*}
e(H)&=&e(H[C])+e(C,V(F))+e(F)\\
&\leq&{r\choose 2}+\sum_{u\in V(F)}d_{H}(u)\\
&\leq&{r\choose 2}+(n-r)(n-km+2m-r-1)\\
&=&\frac{3}{2}r^{2}-(2n-km+2m-\frac{1}{2})r+n^{2}-(km-2m+1)n.
\end{eqnarray*}
Let $f(r)=\frac{3}{2}r^{2}-(2n-km+2m-\frac{1}{2})r+n^{2}-(km-2m+1)n.$ Obviously, $f(r)$ is a concave function on $r.$
Since $\lfloor\frac{n}{3}\rfloor+(k-1)m+2 \leq r\leq n-(k-1)m-2,$ we have
\begin{eqnarray*}
e(H)&\leq& \max \left\{f(\lfloor\frac{n}{3}\rfloor+(k-1)m+2 ),f(n-(k-1)m-2)\right\}\\
&=&{n-(k-1)m-2\choose 2}+(k-1)m^{2}+(k+1)m+2<e(H)
\end{eqnarray*}
for $n\geq (3k-3)m+6$, a contradiction.

By Cases 1 and 2, we know that $r\geq n-(k-1)m-1,$ so $\omega(H)\geq r\geq n-(k-1)m-1$. The proof is completed.
\end{proof}

Using Lemma \ref{le5}, we next prove a technical sufficient condition in terms of $e(G)$ to assure that
an $m$-connected graph $G$ has a spanning $k$-tree.
Recall that $I_{n}$ denotes the complement of $K_{n}$.

\begin{lem}\label{le6}
Let $G$ be an $m$-connected graph of order $n\geq (7k-2)m+4$, where $m$ and $k$ are integers with $m\geq 1$ and $k\geq 2$.
If $$e(G)\geq {n-(k-1)m-2\choose 2}+(k-1)m^{2}+(k+1)m+3,$$ then $G$ has a spanning $k$-tree unless
$C_{n-(k-2)m-1}(G)\cong K_{m}\vee(K_{n-km-1}+I_{km-m+1})$.
\end{lem}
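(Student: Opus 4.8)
The plan is to reduce, via the closure theorem, to the graph $H:=C_{n-(k-2)m-1}(G)=C_{n-km+2m-1}(G)$, then use Lemma~\ref{le5} to extract a huge clique in $H$, and finally invoke Win's toughness-type criterion for spanning $k$-trees, tracking its equality case to recover the exceptional graph. First I would note that $H$ is $m$-connected (adding edges cannot lower connectivity), is $(n-km+2m-1)$-closed by construction, and satisfies $e(H)\ge e(G)\ge{n-(k-1)m-2\choose 2}+(k-1)m^{2}+(k+1)m+3$; hence Lemma~\ref{le5} gives $\omega(H)\ge n-(k-1)m-1$. By Theorem~\ref{th5} it suffices to show that $H$ has a spanning $k$-tree unless $H\cong K_{m}\vee(K_{n-km-1}+I_{km-m+1})$. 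So I would fix a maximum clique $C$ of $H$, put $R:=V(H)\setminus C$, and record $t:=|R|=n-|C|\le(k-1)m+1$ together with $\delta(H)\ge m$.

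The next step is Win's theorem in the form: a connected graph $F$ has a spanning $k$-tree provided $c(F-S)\le(k-1)|S|+1$ for every nonempty $S\subseteq V(F)$, where $c(\cdot)$ denotes the number of components; moreover a short edge count inside any spanning $k$-tree shows this inequality is also necessary, so the graph $K_{m}\vee(K_{n-km-1}+I_{km-m+1})$, which violates it at $S=V(K_m)$, really has no spanning $k$-tree. Here I would argue by contradiction: assume $H$ has no spanning $k$-tree and choose nonempty $S$ with $c(H-S)\ge(k-1)|S|+2$, writing $s:=|S|$. Because $C$ is a clique, $C\setminus S$ (when nonempty) lies in a single component of $H-S$, so $c(H-S)\le 1+|R\setminus S|\le 1+t\le(k-1)m+2$; the alternative $C\subseteq S$ would force $s\ge|C|\ge n-(k-1)m-1$ and hence $c(H-S)\le n-s\le(k-1)m+1<(k-1)s+2$, impossible for $n\ge(7k-2)m+4$. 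Thus $C\setminus S\ne\emptyset$ and $(k-1)s+2\le(k-1)m+2$, i.e.\ $s\le m$; and if $s\le m-1$ then $m$-connectivity would make $H-S$ connected, contradicting $c(H-S)\ge k+1$. Hence $s=m$.

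It remains to run the equality case. All displayed inequalities are now tight: $c(H-S)=(k-1)m+2$, $|R\setminus S|=(k-1)m+1=t$ (so $S\cap R=\emptyset$, $S\subseteq C$, and $|C|=n-(k-1)m-1$), and the $(k-1)m+1$ components of $H-S$ other than the one meeting $C\setminus S$ are single vertices. Therefore $R$ is independent, no vertex of $R$ has a neighbour in $C\setminus S$, and since $d_H(u)\ge m=|S|$ for each $u\in R$ we obtain $N_H(u)=S$ for every $u\in R$. Combined with the fact that $C=(C\setminus S)\cup S$ is a clique, this determines $E(H)$ exactly, and $H\cong K_{m}\vee(K_{n-km-1}+I_{km-m+1})$ with $K_m=H[S]$, $K_{n-km-1}=H[C\setminus S]$ and $I_{km-m+1}=H[R]$. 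So this is the unique obstruction, which is exactly what the lemma asserts.

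I expect the main obstacle to be precisely this equality-case bookkeeping: one must argue that the failure of Win's inequality forces not merely the sizes $|S|=m$ and $|R|=(k-1)m+1$ but the entire edge structure --- that $R$ is an independent set, that each of its vertices is joined to exactly the $m$-set $S$, and that no further edges survive --- so that $H$ is isomorphic to, and not just a spanning subgraph of, the extremal graph. By contrast, the order hypothesis $n\ge(7k-2)m+4$ is used essentially only through Lemma~\ref{le5} to produce the large clique, and it enters the argument above only via crude comparisons.
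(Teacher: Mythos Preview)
Your argument is correct, but it takes a genuinely different route from the paper's. After invoking Lemma~\ref{le5} and Theorem~\ref{th5} to reduce to the closed graph $H$ with a clique of size at least $n-(k-1)m-1$, the paper never appeals to Win's toughness criterion; instead it exploits the fact that $H$ is $(n-km+2m-1)$-closed to force the structure directly: any vertex outside the maximum clique with degree at least $m+1$ would be absorbed into the clique by the closure property, so all outside vertices have degree exactly $m$; a second closure argument shows these vertices share a common neighbourhood in the clique; and $m$-connectivity then forces that neighbourhood to have size exactly $m$, whence the outside set is independent and $H\cong K_{m}\vee(K_{n-km-1}+I_{km-m+1})$. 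Your approach, by contrast, uses the closure property only through Lemma~\ref{le5} and Theorem~\ref{th5}, and extracts the exceptional structure from the equality case of Win's inequality $c(H-S)\le(k-1)|S|+1$; the large clique bounds the number of components after deletion, $m$-connectivity pins down $|S|=m$, and the chain of equalities does the rest. Your route is arguably cleaner once Win's theorem is in hand and shows that the closed-graph hypothesis is not essential to the structural analysis; the paper's route is more self-contained in that it does not import Win's result, and it makes explicit why the closure operation itself rigidifies the structure.
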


\begin{proof}
Suppose that $G$ has no spanning $k$-tree, where $n\geq (7k-2)m+4$, $m\geq 1$ and $k\geq2$.
Let $H=C_{n-(k-2)m-1}(G)$. It suffices to prove that $H\cong K_{m}\vee(K_{n-km-1}+I_{km-m+1}).$

By Theorem \ref{th5}, $H$ has no spanning $k$-tree.
Note that $e(G)\geq {n-(k-1)m-2\choose 2}+(k-1)m^{2}+(k+1)m+3$ and $G\subseteq H$. Then $e(H)\geq {n-(k-1)m-2\choose 2}+(k-1)m^{2}+(k+1)m+3$.
By Lemma \ref{le5}, we have $\omega(H)\geq n-(k-1)m-1$.

Next we characterize the structure of $H$. First we prove the following claim.

\begin{claim}\label{cla3}
{\rm $\omega(H)=n-(k-1)m-1$.}
\end{claim}

\begin{proof}
Note that $\omega(H)\geq n-(k-1)m-1$. It suffices to show that $\omega(H)\leq n-(k-1)m-1$.
Suppose to the contrary that $\omega(H)\geq n-(k-1)m$.
Let $C'$ be an $(n-km+m)$-clique of $H$ and $F'$ be a subgraph of $H$ induced by $V(H)\backslash C'$.
Since $m\geq1$ and $k\geq2$, we have $|V(F')|=(k-1)m>0$. Note that $G$ is an $m$-connected graph.
Then $H$ is also $m$-connected, so $\delta(H)\geq m$.
Then for each $u\in V(C')$ and each $v\in V(F')$, we have
$$d_{H}(u)+d_{H}(v)\geq (n-km+m-1)+m=n-(k-2)m-1.$$
Note that $H$ is an $(n-km+2m-1)$-closed graph.
Then each vertex $v$ of $F'$ is adjacent to every vertex of $C'$. It follows that $K_{n-km+m}\vee I_{km-m}$ is a spanning subgraph of $H$.
Next we construct a spanning $k$-tree of $K_{n-km+m}\vee I_{km-m}$.
Note that $n\geq (7k-2)m+4$. Then $n-km+m\geq (7k-2)m+4-km+m=(6k-1)m+4>km-m$,
so $K_{n-km+m}\vee I_{km-m}$ has a Hamilton path.
Then $K_{n-km+m}\vee I_{km-m}$ has a spanning $k$-tree for $k\geq 2$.
It follows that $H$ has a spanning $k$-tree, a contradiction. Therefore, $\omega(H)=n-(k-1)m-1$.
\end{proof}

Let $C$ be a maximum clique of $H$ and $F$ be a subgraph of $H$ induced by $V(H)\backslash C.$
By Claim \ref{cla3}, we know that $C\cong K_{n-(k-1)m-1}$.
Let $V(C)= \{u_{1} ,u_{2}, \ldots, u_{n-(k-1)m-1}\}$ and $V(F)= \{v_{1} ,v_{2}, \ldots, v_{(k-1)m+1}\}$.

\begin{claim}\label{cla5}
{\rm $d_{H}(v_{i})=m$ for every $v_{i}\in V(F).$}
\end{claim}

\begin{proof}
Recall that $\delta(H)\geq m$. Then for every $v_{i}\in V(F),$ we always have $d_{H}(v_{i})\geq m$.
Suppose that there exists a vertex $v_{i}\in V(F)$ with $d_{H}(v_{i})\geq m+1.$ Then
$$d_{H}(u)+d_{H}(v_{i})\geq (n-km+m-2)+(m+1)=n-(k-2)m-1$$
for each $u\in V(C)$.
Note that $H$ is an $(n-km+2m-1)$-closed graph. Then the vertex $v_{i}$ is adjacent to every vertex of $C$.
This implies that $C\cup\{v_{i}\}$ is a larger clique, a contradiction. Hence $d_{H}(v_{i})= m$ for every $v_{i}\in V(F).$
\end{proof}

\begin{claim}\label{cla6}
{\rm $N_{H}(v_{i})\cap C=N_{H}(v_{j})\cap C,$ where $i\neq j$.}
\end{claim}

\begin{proof}
Take arbitrarily $u_{i}\in N_{H}(v_{i})\cap C$. Then for each $v_{j}\in V(F)$, we have
$$d_{H}(u_{i})+d_{H}(v_{j})\geq (n-km+m-1)+m=n-(k-2)m-1,$$
where $i\neq j$. Note that $H$ is an $(n-km+2m-1)$-closed graph. Then $u_{i}$ is adjacent to every vertex of $F$.
It follows that $u_{i}\in N_{H}(v_{j})\cap C$, so $N_{H}(v_{i})\cap C\subseteq N_{H}(v_{j})\cap C$.
Similarly, we can prove that $N_{H}(v_{j})\cap C\subseteq N_{H}(v_{i})\cap C$.
Hence $N_{H}(v_{i})\cap C=N_{H}(v_{j})\cap C$, where $i\neq j$.
\end{proof}

\begin{claim}\label{cla4}
{\rm $V(F)$ is an independent set.}
\end{claim}

\begin{proof}
Recall that $H$ is $m$-connected.
By Claims \ref{cla5} and \ref{cla6}, $1 \leq |N_{H}(v)\cap C| \leq m$ for every $v\in V(F)$.
We can claim that $|N_{H}(v)\cap C|= m$. In fact, if $1 \leq |N_{H}(v)\cap C| \leq m-1$, then
$H[V(H)\setminus (N_{H}(v)\cap C)]$ is not connected,
which contradicts the $m$-connectivity of $H$.
Then $d_C(v)=|N_{H}(v)\cap C|=m$ for every $v\in V(F)$. Combining Claim \ref{cla5}, we know that $V(F)$ is an independent set.
\end{proof}

\begin{figure}
\centering
\includegraphics[width=0.4\textwidth]{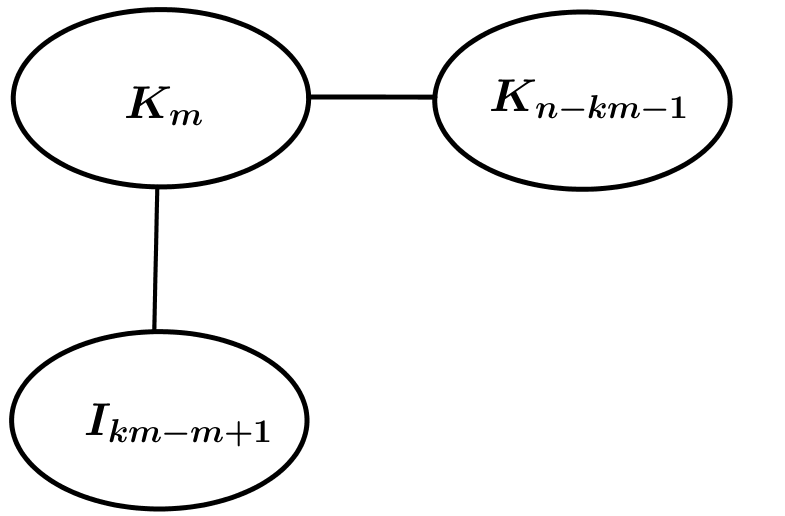}\\
\caption{Graph $K_{m}\vee(K_{n-km-1}+I_{km-m+1})$.
}\label{fig2}
\end{figure}

By the above claims, we know that $H\cong K_{m}\vee(K_{n-km-1}+I_{km-m+1})$ (see Fig. \ref{fig2}).
Note that the vertices of $I_{km-m+1}$ are only adjacent to $m$ vertices of $K_{n-(k-1)m-1}$.
Then the maximum degree of each spanning tree of $K_{m}\vee(K_{n-km-1}+I_{km-m+1})$ is at least $k+1$.
This implies that $K_{m}\vee(K_{n-km-1}+I_{km-m+1})$ has no spanning $k$-tree.
Note that $e(H)={n-(k-1)m-1\choose 2}+(k-1)m^{2}+m$ and  $n\geq (7k-2)m+4>(2k-1)m+5$. Then we have
$$e(H)\geq {n-(k-1)m-2\choose 2}+(k-1)m^{2}+(k+1)m+3.$$
Therefore, $H=C_{n-(k-2)m-1}(G)\cong K_{m}\vee(K_{n-km-1}+I_{km-m+1}),$ as desired.
\end{proof}

Let $A=(a_{ij})$ and $B=(b_{ij})$ be two $n\times n$ matrices.
Define $A\leq B$ if $a_{ij}\leq b_{ij}$ for all $i$ and $j$, and define $A< B$ if $A\leq B$ and $A\neq B$.

\begin{lem}[Berman and Plemmons \cite{Berman1979}, Horn and Johnson \cite{Horn1986}]\label{le7}
Let $A=(a_{ij})$ and $B=(b_{ij})$ be two $n\times n$ matrices with the spectral radii $\lambda(A)$ and $\lambda(B)$, respectively.
If $0\leq A\leq B$, then $\lambda(A)\leq \lambda(B)$.
Furthermore, if $B$ is irreducible and $0\leq A < B$, then $\lambda(A)<\lambda(B)$.
\end{lem}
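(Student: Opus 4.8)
The plan is to derive both assertions from the Perron--Frobenius theory of nonnegative matrices. Throughout I write $M\ge 0$ for a matrix with nonnegative entries, and I will use two standard facts. (a) For every nonnegative square matrix $M$, the spectral radius $\lambda(M)$ is itself an eigenvalue of $M$ and admits a nonnegative eigenvector $y\ge 0$, $y\ne 0$, with $My=\lambda(M)y$. (b) If $M$ is moreover irreducible, then $\lambda(M)$ is a simple eigenvalue whose eigenspace is spanned by an entrywise positive vector; consequently every nonnegative eigenvector of $M$ is a positive multiple of this Perron vector, and the transpose $M^{T}$ is again irreducible, so there is a positive left eigenvector $z>0$ with $z^{T}M=\lambda(M)z^{T}$. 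Note that $0\le A\le B$ forces both $A$ and $B$ to be nonnegative, so these facts apply.

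For the first statement I would use Gelfand's formula $\lambda(M)=\lim_{t\to\infty}\|M^{t}\|^{1/t}$. First I observe that the entrywise domination $0\le A\le B$ propagates to all powers: by induction $0\le A^{t}\le B^{t}$, since each entry $(A^{t})_{ij}=\sum_{k}(A^{t-1})_{ik}A_{kj}$ is a sum of nonnegative terms, each dominated by the corresponding term of $(B^{t})_{ij}$. Taking the entrywise sum norm $\|M\|=\sum_{i,j}|m_{ij}|$, which is submultiplicative and monotone under entrywise domination of nonnegative matrices, the bound $0\le A^{t}\le B^{t}$ gives $\|A^{t}\|\le\|B^{t}\|$ for every $t$. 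Taking $t$-th roots and letting $t\to\infty$ in Gelfand's formula yields $\lambda(A)\le\lambda(B)$. No irreducibility is needed for this part.

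For the second statement I argue by contradiction. By the first part $\lambda(A)\le\lambda(B)$, so if strictness fails then $\lambda(A)=\lambda(B)=:\lambda$. Using fact (a) I pick a nonnegative eigenvector $y\ge 0$, $y\ne 0$, of $A$ with $Ay=\lambda y$, and using fact (b) for the irreducible matrix $B$ I pick a positive left eigenvector $z>0$ with $z^{T}B=\lambda z^{T}$. Since $B\ge A$ and $y\ge 0$, the vector $w:=(B-A)y$ satisfies $w\ge 0$. Evaluating $z^{T}w$ in two ways gives $z^{T}(B-A)y=z^{T}By-z^{T}Ay=\lambda z^{T}y-\lambda z^{T}y=0$, where I used $z^{T}B=\lambda z^{T}$ and $Ay=\lambda y$. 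Because $z>0$ and $w\ge 0$, the equality $z^{T}w=0$ forces the whole vector $w=0$, that is $By=Ay=\lambda y$. Thus $y$ is a nonnegative eigenvector of the irreducible matrix $B$ for the eigenvalue $\lambda=\lambda(B)$, so by fact (b) it is entrywise positive, $y>0$. Finally $(B-A)y=0$ with $B-A\ge 0$ and $y>0$ forces every entry $(B-A)_{ij}=0$, whence $A=B$, contradicting $0\le A<B$. Therefore $\lambda(A)<\lambda(B)$.

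The step I expect to be delicate is precisely the promotion of $y$ from an eigenvector of $A$ to an eigenvector of $B$. The naive pairing $z^{T}Ay\le z^{T}By$ only reproduces the weak inequality already obtained in part one, and strictness is not visible from it directly, because the Perron vector $y$ of $A$ may vanish exactly on the support of $B-A$. The crux is the observation that $z^{T}w=0$ with $z>0$ and $w\ge 0$ forces the entire vector $w=(B-A)y$ to vanish; only then does irreducibility of $B$ upgrade $y$ to a strictly positive vector and close the argument. This is exactly where the hypothesis that $B$ (rather than $A$) is irreducible is indispensable.
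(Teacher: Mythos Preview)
Your argument is correct. The paper does not supply its own proof of this lemma; it is quoted as a known result from Berman--Plemmons and Horn--Johnson, so there is nothing to compare your approach against. Your proof via Gelfand's formula for the weak inequality and the left/right Perron-vector pairing for strictness is a standard and clean route to this classical fact.
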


\begin{lem}[Hong \cite{Hong1988}]\label{le8}
Let $G$ be a connected graph with $n$ vertices. Then
$$\rho(G)\leq \sqrt{2e(G)-n+1}.$$
\end{lem}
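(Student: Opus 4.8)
The plan is to give a direct eigenvector argument that never leaves the graph $G$, using only the Perron--Frobenius theorem, a two-step walk count, and the fact that connectivity forces $\delta(G)\ge 1$. First I would fix a Perron eigenvector $\mathbf{x}=(x_{1},\dots,x_{n})^{T}$ of $A(G)$ associated with $\rho=\rho(G)$; since $G$ is connected, $A(G)$ is irreducible, so $\mathbf{x}$ may be taken strictly positive. I would then choose a vertex $u$ with $x_{u}=\max_{i}x_{i}$ and normalize so that $x_{u}=1$, whence $0<x_{w}\le 1$ for every vertex $w$.

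The heart of the argument is to estimate $\rho^{2}$ through the $u$-entry of $A^{2}\mathbf{x}$ at this dominating vertex. Expanding $\rho^{2}x_{u}=(A^{2}\mathbf{x})_{u}$ as a sum over walks of length two issuing from $u$ gives $\rho^{2}=\rho^{2}x_{u}=\sum_{v\in N_{G}(u)}\sum_{w\in N_{G}(v)}x_{w}$, where $N_{G}(\cdot)$ denotes neighborhood in $G$. Bounding each $x_{w}\le x_{u}=1$ collapses the inner sum to $d_{G}(v)$, so that $\rho^{2}\le \sum_{v\in N_{G}(u)}d_{G}(v)$. This is the first of the two inequalities in the proof, and its validity rests entirely on having anchored the normalization at the vertex of maximum eigenvector weight.

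It then remains to convert this local neighbor-degree sum into the global quantity $2e(G)-n+1$, and this is where connectivity enters. I would partition $V(G)$ into $\{u\}$, $N_{G}(u)$, and the remaining $n-1-d_{G}(u)$ vertices lying outside the closed neighborhood $N_{G}[u]$. From the handshake identity $\sum_{v\in V(G)}d_{G}(v)=2e(G)$ I would isolate $\sum_{v\in N_{G}(u)}d_{G}(v)=2e(G)-d_{G}(u)-\sum_{w\notin N_{G}[u]}d_{G}(w)$; because $G$ is connected, every vertex has degree at least $1$, so the last sum is at least $n-1-d_{G}(u)$. Substituting yields $\sum_{v\in N_{G}(u)}d_{G}(v)\le 2e(G)-n+1$, and combining with the displayed bound gives $\rho^{2}\le 2e(G)-n+1$, which is the claimed inequality after taking square roots. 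The step I expect to be the genuinely decisive one, rather than merely routine bookkeeping, is recognizing that the seemingly weak local estimate $\rho^{2}\le\sum_{v\in N_{G}(u)}d_{G}(v)$ becomes sharp enough precisely once each degree outside $N_{G}[u]$ is charged its minimum possible value $1$; the remaining manipulations are straightforward applications of the degree-sum identity.
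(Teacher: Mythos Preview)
Your argument is correct. The paper, however, does not supply its own proof of this lemma: it is quoted as a known result from Hong~\cite{Hong1988} and used as a black box in the proof of Theorem~\ref{main2}. So there is no in-paper proof to compare against.

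For what it is worth, the proof you wrote is essentially Hong's original argument: pick a vertex $u$ maximizing the Perron coordinate, read off $\rho^{2}\le\sum_{v\in N_{G}(u)}d_{G}(v)$ from the $u$-entry of $A^{2}\mathbf{x}$, and then bound the neighbor-degree sum by $2e(G)-n+1$ using $\delta(G)\ge 1$. All steps are valid, and the only place connectivity is used is exactly where you flagged it, to guarantee every vertex outside $N_{G}[u]$ contributes at least $1$ to the degree sum (irreducibility of $A(G)$ for the Perron vector is a bonus but not strictly needed, since a nonnegative eigenvector with a maximum entry equal to $1$ suffices for the inequality chain).
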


Now, we are ready to give the proof of Theorem \ref{main2}.

\medskip
\noindent  \textbf{Proof of Theorem \ref{main2}.}
Let $G$ be an $m$-connected graph of order $n\geq {\rm max}\{(7k-2)m+4, (k-1)m^{2}+\frac{1}{2}(3k+1)m+\frac{9}{2}\}$,
where $m\geq 1$ and $k\geq2.$ Suppose to the contrary that $G$ has no spanning $k$-tree.
Note that $K_{n-(k-1)m-1}$ is a proper subgraph of $K_{m}\vee(K_{n-km-1}+I_{km-m+1})$.
By Lemma \ref{le7}, we know that $\rho(K_{m}\vee(K_{n-km-1}+I_{km-m+1}))>\rho(K_{n-(k-1)m-1})=n-(k-1)m-2$.
By Lemma \ref{le8} and the assumption of Theorem \ref{main2}, we have
$$n-(k-1)m-2<\rho(K_{m}\vee(K_{n-km-1}+I_{km-m+1})) \leq \rho(G)\leq \sqrt{2e(G)-n+1}.$$
By simple calculation, we obtain
\begin{eqnarray*}
e(G)&>&\frac{(n-km+m-2)^{2}+n-1}{2}\\
&\geq& {n-(k-1)m-2\choose 2}+(k-1)m^{2}+(k+1)m+3
\end{eqnarray*}
for $n\geq (k-1)m^{2}+\frac{1}{2}(3k+1)m+\frac{9}{2}$. Let $H=C_{n-(k-2)m-1}(G)$.
By Lemma \ref{le6}, we have $H\cong K_{m}\vee(K_{n-km-1}+I_{km-m+1}).$ Note that $G\subseteq H.$
Then $$\rho(G)\leq\rho(H)=\rho(K_{m}\vee(K_{n-km-1}+I_{km-m+1})).$$
Combining the assumption that $\rho(G)\geq \rho(K_{m}\vee(K_{n-km-1}+I_{km-m+1}))$, we have $G\cong H.$
From the end of the proof in Lemma \ref{le6}, we know that $K_{m}\vee(K_{n-km-1}+I_{km-m+1})$ has no spanning $k$-tree.
Hence $G\cong K_{m}\vee(K_{n-km-1}+I_{km-m+1}).$
\hspace*{\fill}$\Box$

\section{Proof of Theorem \ref{main3}}

By the Perron-Frobenius Theorem, there exists a unique positive unit eigenvector corresponding to $\rho(G)$,
which is called the {\it Perron vector} of $G$, and $\rho(G)$ is always a positive number (unless $G$ is an empty graph).

\medskip
\noindent  \textbf{Proof of Theorem \ref{main3}.}
Let $G$ be a connected graph of order $n\geq 3(k+2)\delta+2$ with minimum degree $\delta\geq 1$ and $k\geq 1$.
Suppose to the contrary that $G$ has no spanning tree with leaf degree at most $k$.
By Theorem \ref{th6}, there exists some nonempty subset $S\subseteq V(G)$ such that $i(G-S)\geq (k+1)|S|.$
Let $|S|=s $. It is easy to see that $s\geq \delta$.
Note that $G$ is a spanning subgraph of $G'=K_{s}\vee(K_{n-ks-2s}+I_{ks+s})$ (see Fig. \ref{fig3}).
By Lemma \ref{le7}, we have $\rho(G)\leq \rho(G')$.
We divide the proof into the following two cases.
\begin{figure}
\centering
\includegraphics[width=0.4\textwidth]{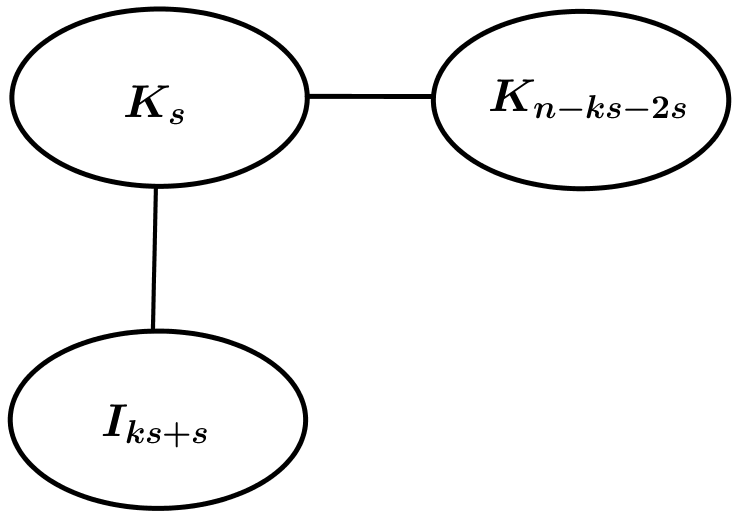}\\
\caption{Graph $K_{s}\vee(K_{n-ks-2s}+I_{ks+s}).$
}\label{fig3}
\end{figure}

\vspace{1.5mm}
\noindent\textbf{Case 1.}  $s=\delta.$
\vspace{1mm}

Then $G'\cong K_{\delta}\vee(K_{n-k\delta-2\delta}+I_{k\delta+\delta}).$ From the above, we know that
$$\rho(G)\leq \rho(K_{\delta}\vee(K_{n-k\delta-2\delta}+ I_{k\delta+\delta})).$$
By the assumption that $\rho(G)\geq \rho(K_{\delta}\vee(K_{n-k\delta-2\delta}+ I_{k\delta+\delta})),$ we have $G\cong K_{\delta}\vee(K_{n-k\delta-2\delta}+I_{k\delta+\delta}).$
Note that the vertices of $I_{k\delta+\delta}$ are only adjacent to $\delta$ vertices of $K_{n-k\delta-\delta}$.
Then the leaf degree of any spanning tree of $K_{\delta}\vee(K_{n-k\delta-2\delta}+I_{k\delta+\delta})$ is at least $k+1$.
This implies that $K_{\delta}\vee(K_{n-k\delta-2\delta}+I_{k\delta+\delta})$ has no spanning tree with leaf degree at most $k$.
Hence $G\cong K_{\delta}\vee(K_{n-k\delta-2\delta}+I_{k\delta+\delta})$.

\vspace{1.5mm}
\noindent\textbf{Case 2.}  $s\geq \delta+1.$
\vspace{1mm}

Recall that $G'=K_{s}\vee(K_{n-ks-2s}+I_{ks+s})$. Note that $n\geq (k+2)s$. Then $\delta+1 \leq s \leq \frac{n}{k+2}$.
Since $G'$ is not a complete graph, $\rho'<n-1.$
Let $\mathbf{x}$ be the Perron vector of $A(G')$, and let  $\rho'=\rho(G')$.
By symmetry, $\mathbf{x}$ takes the same value $(x_{1},x_{2},x_{3})$ on the partition ($V(K_{s}), V(K_{n-ks-2s}), V(I_{ks+s}))$.
By $A(G')\mathbf{x}=\rho' \mathbf{x}$, we have $\rho'x_3=sx_1.$ Since $\rho'>0$,
\begin{equation}\label{eq1}
x_3=\frac{sx_1}{\rho'}.
\end{equation}

Let $V(K_{s})=\{u_1, u_2, \ldots, u_s\}$, $V(K_{n-ks-2s})=\{v_1, v_2, \ldots, v_{n-ks-2s}\}$ and
$V(I_{ks+s})=\{w_1, w_2, \ldots, w_{ks+s}\}.$
Let $G^*=K_{\delta}\vee (K_{n-k\delta-2\delta}+I_{k\delta+\delta})$, and let
$E_1=\{v_{i}w_{j}|1\leq i\leq n-(k+2)s, (k+1)\delta+1\leq j\leq (k+1)s\}\cup \{w_{i}w_{j}|(k+1)\delta+1\leq i\leq (k+1)s-1, i+1\leq j\leq (k+1)s\}$ and $E_2=\{u_{i}w_{j}|\delta+1\leq i\leq s, 1\leq j\leq (k+1)\delta\}$.
Obviously, $G^*\cong G'+E_1-E_2.$
Let $\mathbf{y}$ be the Perron vector of $A(G^{*})$, and let $\rho^*=\rho(G^*)$.
By symmetry, $\mathbf{y}$ takes the same values $(y_{1},y_{2},y_{3})$ on the partition
$(V(K_{\delta}), V(K_{n-k\delta-2\delta}), V(I_{k\delta+\delta}))$.
By $A(G^{*})\mathbf{y}=\rho^*\mathbf{y}$, we have
\begin{gather}\nonumber
\rho^*y_2=\delta y_1+(n-k\delta-2\delta-1)y_2  ~~\mbox{and}~~\rho^*y_3=\delta y_1.
\end{gather}
Note that $K_{n-k\delta-2\delta}$ is a proper subgraph of $G^{*}$. By Lemma \ref{le7}, we have $\rho^*>\rho(K_{n-k\delta-2\delta})=n-k\delta-2\delta-1$.
By simple calculation, we have
\begin{equation}\label{eq2}
y_2=\frac{\rho^*y_3}{\rho^*-(n-k\delta-2\delta-1)}.
\end{equation}

\begin{claim}\label{cla7}
{\rm  $\rho^*>\rho'$.}
\end{claim}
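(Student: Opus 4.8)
The plan is to show $\rho^*>\rho'$ by exhibiting $\rho^*$ as the spectral radius of a suitable quotient matrix and then comparing, via Lemma \ref{le7}, with a matrix whose spectral radius is at least $\rho'$. More concretely, since $G^*\cong G'+E_1-E_2$, the natural first move is to compare the two graphs directly through their Perron vectors. I would start from the Rayleigh quotient characterization $\rho^*=\max_{\mathbf{z}\neq 0}\frac{\mathbf{z}^{\top}A(G^*)\mathbf{z}}{\mathbf{z}^{\top}\mathbf{z}}$ and plug in the Perron vector $\mathbf{x}$ of $G'$ (extended to the vertex set of $G^*$ in the obvious way, using that both graphs share the same vertex set $V(G)$). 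This gives
\[
\rho^*\geq \frac{\mathbf{x}^{\top}A(G^*)\mathbf{x}}{\mathbf{x}^{\top}\mathbf{x}}
=\rho'+\frac{2\big(\sum_{e\in E_1}x_ax_b-\sum_{e\in E_2}x_ax_b\big)}{\mathbf{x}^{\top}\mathbf{x}},
\]
so it suffices to show the bracketed quantity is positive, i.e. that the edges added in $E_1$ contribute more weight (with respect to $\mathbf{x}$) than those removed in $E_2$.

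The key step is therefore to estimate $\sum_{e\in E_1}x_ax_b$ from below and $\sum_{e\in E_2}x_ax_b$ from above using the explicit component values $(x_1,x_2,x_3)$ of $\mathbf{x}$ on the three classes $(V(K_s),V(K_{n-ks-2s}),V(I_{ks+s}))$ together with relation \eqref{eq1}, namely $x_3=sx_1/\rho'$. The removed set $E_2$ consists of $(s-\delta)\cdot(k+1)\delta$ edges each of weight $x_1x_3$; the added set $E_1$ consists of roughly $(n-(k+2)s)\cdot((k+1)(s-\delta))$ edges of weight $x_2x_3$ plus about $\binom{(k+1)(s-\delta)}{2}$ edges of weight $x_3^2$. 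Since $\rho'<n-1$ forces $x_1<x_2$ (the $K_s$-vertices have smaller degree than the clique vertices of $K_{n-ks-2s}$ once $s\leq n/(k+2)$), and since $n$ is large (the hypothesis $n\geq 3(k+2)\delta+2$ gives $n-(k+2)s$ comfortably larger than $(k+1)\delta$ in the relevant range $\delta+1\leq s\leq n/(k+2)$), the $E_1$-sum dominates. I would carry this out by bounding $\rho'$ crudely from below (e.g. $\rho'\geq \rho(K_{n-ks-2s})=n-ks-2s-1$) and from above ($\rho'<n-1$) to control the ratios $x_2/x_1$ and $x_3/x_1$, reducing the inequality to a polynomial inequality in $n,s,k,\delta$ that holds throughout the stated range.

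The main obstacle I anticipate is the bookkeeping in the range $s\approx n/(k+2)$, where $n-(k+2)s$ can be as small as $0$ and the contribution of the $E_1$-edges of the first type degenerates; there one must lean on the second type of $E_1$-edge (the $x_3^2$ clique edges among the former isolated vertices, which number $\Theta(s^2)$) to beat the $E_2$-loss. An alternative, cleaner route — which I would fall back on if the Rayleigh-quotient estimate gets unwieldy — is to use the $3\times 3$ quotient matrices of $G'$ and $G^*$ with respect to their equitable partitions, observe that $\rho'$ and $\rho^*$ are the largest roots of the respective cubic characteristic polynomials $f'(\lambda)$ and $f^*(\lambda)$, and prove $f^*(\rho')<0$ by direct substitution using \eqref{eq1}; since $f^*$ is eventually positive and increasing past its largest root, $f^*(\rho')<0$ yields $\rho^*>\rho'$ immediately. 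Either way the heart of the matter is the same large-$n$ polynomial comparison, and the hypothesis $n\geq 3(k+2)\delta+2$ is exactly what makes it go through.
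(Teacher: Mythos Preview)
Your proposal contains a factual error that undermines the main line of argument. In $G'=K_s\vee(K_{n-ks-2s}+I_{ks+s})$ the vertices of $K_s$ are \emph{universal} (degree $n-1$), while those of $K_{n-ks-2s}$ have degree $n-ks-s-1<n-1$; consequently the Perron entries satisfy $x_1>x_2>x_3$, not $x_1<x_2$ as you assert. (Formally, from $\rho' x_2=sx_1+(n-ks-2s-1)x_2$ one gets $x_2=\frac{s x_1}{\rho'-(n-ks-2s-1)}$, and since $K_{n-ks-s}\subset G'$ forces $\rho'>n-ks-s-1$, this gives $x_2<x_1$.) Your heuristic that ``the $E_1$-sum dominates because $x_1<x_2$'' therefore collapses: the removed edges in $E_2$ carry weight $x_1x_3$, which involves the \emph{largest} Perron entry, while the added edges in $E_1$ carry only the smaller weights $x_2x_3$ and $x_3^2$. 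The single-vector Rayleigh inequality you wrote down may still be provable, but it is much tighter than you suggest (numerically the margin is $O(1)$ at the boundary $n=3(k+2)\delta+2$, $s\approx n/(k+2)$), and nothing in your sketch establishes it.

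The paper avoids this difficulty by using the \emph{two}-vector bilinear form $\mathbf{y}^{T}(A(G^*)-A(G'))\mathbf{x}=(\rho^*-\rho')\,\mathbf{y}^{T}\mathbf{x}$, with $\mathbf{x}$ the Perron vector of $G'$ and $\mathbf{y}$ that of $G^*$. The point is that in $G^*$ the vertices $u_{\delta+1},\dots,u_s$ and $w_{(k+1)\delta+1},\dots,w_{(k+1)s}$ all lie in the big clique $K_{n-(k+2)\delta}$, so they receive the \emph{large} value $y_2$; this makes the $E_1$-contribution genuinely dominate. Combined with a contradiction hypothesis $\rho'\ge\rho^*$ (used only to bound $\rho'/\rho^*\ge 1$) and the relations \eqref{eq1}--\eqref{eq2}, the comparison reduces to the clean linear inequality $2n-2(k+2)\delta-s-2-\rho'>0$, which follows at once from $s\le n/(k+2)$, $\rho'<n-1$, and $n\ge 3(k+2)\delta+2$. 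Your fallback via characteristic polynomials of the $3\times 3$ quotient matrices is viable in principle, but you would still need to carry out the cubic comparison; the two-Perron-vector route is what makes the computation short.
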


\begin{proof}
Suppose to the contrary that $\rho'\geq \rho^*$. Note that $n\geq 3(k+2)\delta+2$.
Combining $n\geq (k+2)s$, $\delta+1\leq s\leq \frac{n}{k+2}$, (\ref{eq1}), and (\ref{eq2}), we have
\begin{eqnarray*}
&&\mathbf{y}^{T}(\rho^*-\rho')\mathbf{x}=\mathbf{y}^{T}(A(G^*)-A(G'))\mathbf{x}\\
&=&\sum_{i=1}^{n-(k+2)s}\sum_{j=(k+1)\delta+1}^{(k+1)s}(x_{v_{i}}y_{w_j}+x_{w_j}y_{v_i})
+\sum_{i=(k+1)\delta+1}^{(k+1)s-1}\sum_{j=i+1}^{(k+1)s}(x_{w_i}y_{w_j}+x_{w_j}y_{w_i})
-\sum_{i=\delta+1}^{s}\sum_{j=1}^{(k+1)\delta}(x_{u_i}y_{w_j}+x_{w_j}y_{u_i})\\
&=&(k+1)(s-\delta)\left((n-ks-2s)(x_2y_2+x_3y_2)+((k+1)(s-\delta)-1)x_3y_2-\delta(x_1y_3+x_3y_2)\right)\\
&=&(k+1)(s-\delta)\left((n-k\delta-2\delta-s-1)x_3y_2+(n-ks-2s)x_2y_2-\delta x_1y_3\right)\\
&\geq&(k+1)(s-\delta)\left((n-k\delta-2\delta-s-1)x_3y_2-\delta x_1y_3\right)\\
&=&(k+1)(s-\delta)x_1y_3\left(\frac{\rho^*s(n-k\delta-2\delta-s-1)}{\rho'(\rho^*-(n-k\delta-2\delta-1))}-\delta\right)\\
&=&\frac{\rho^*\delta(k+1)(s-\delta)x_1y_3}{\rho'(\rho^*-(n-k\delta-2\delta-1))}\left(\frac{s}{\delta}(n-k\delta-2\delta-s-1)
-\rho'+\frac{\rho'}{\rho^*}(n-k\delta-2\delta-1)\right)\\
&>&\frac{\rho^*\delta(k+1)(s-\delta)x_1y_3}{\rho'(\rho^*-(n-k\delta-2\delta-1))}\left((n-k\delta-2\delta-s-1)-\rho'+(n-k\delta-2\delta-1)\right)\\
&=&\frac{\rho^*\delta(k+1)(s-\delta)x_1y_3}{\rho'(\rho^*-(n-k\delta-2\delta-1))}\left(2n-2(k+2)\delta-s-2-\rho'\right).
\end{eqnarray*}
Let $f(n)=2n-2(k+2)\delta-s-2-\rho'$. Note that $s\leq\frac{n}{k+2}$, $k\geq1$, $n\geq 3(k+2)\delta+2$ and $\rho'<n-1$. Then
\begin{eqnarray*}
f(n)&=&2n-2(k+2)\delta-s-2-\rho'\\
&\geq&2n-2(k+2)\delta-\frac{n}{k+2}-2-\rho'\\
&\geq&2n-2(k+2)\delta-\frac{n}{3}-2-\rho'\\
&=&\frac{5n}{3}-2(k+2)\delta-2-\rho'\\
&\geq&n+\frac{2}{3}(3(k+2)\delta+2)-2(k+2)\delta-2-\rho'\\
&=&n-\rho'-\frac{2}{3}>0.
\end{eqnarray*}
From the above analysis, we have $\mathbf{y}^{T}(\rho^*-\rho')\mathbf{x}>0$. It follows that $\rho^*>\rho',$
which contradicts the assumption. Hence $\rho^*>\rho'.$
\end{proof}

By Claim \ref{cla7}, we have $\rho(G)\leq\rho(G')<\rho(G^*),$ which contradicts the hypothesis of this theorem, as desired. \hspace*{\fill}$\Box$

\vspace{5mm}
\noindent
{\bf Declaration of competing interest}
\vspace{3mm}

The authors declare that they have no known competing financial interests or personal relationships that could have appeared to influence the work reported in this paper.




\begin{thebibliography}{99}
\setlength{\itemsep}{0pt}
\bibitem{Ao2023} G.Y. Ao, R.F. Liu, J.J. Yuan, Spectral radius and spanning trees of graphs, \emph{Discrete Math.} \textbf{346} (2023) 113400.

\bibitem{Berman1979} A. Berman, R.J. Plemmons, Nonnegative matrices in the mathematical sciences, Academic Press, New York, 1979.

\bibitem{Bondy1976} J.A. Bondy, V. Chv\'{a}tal, A method in graph theory, \emph{Discrete Math.} \textbf{15} (1976) 111--135.

\bibitem{Dirac1952} G.A. Dirac, Some theorems on abstract graphs, \emph{Proc. Lond. Math. Soc.} \textbf{2} (1952) 69--81.

\bibitem{Duan2020} X.Z. Duan, B. Ning, X. Peng, J. Wang, W.H. Yang, Maximizing the number of cliques in graphs with given matching number,
\emph{Discrete Appl. Math.} \textbf{287} (2020) 110--117.

\bibitem{Ellingham2000} M.N. Ellingham, X.Y. Zha, Toughness, trees, and walks, \emph{J. Graph Theory} \textbf{33} (2000) 125--137.

\bibitem{Fan2021} D.D. Fan, S. Goryainov, X.Y. Huang, H.Q. Lin, The spanning $k$-trees, perfect matchings and spectral radius of graphs,
\emph{Linear Multilinear Algebra} \textbf{70} (2022) 7264--7275.

\bibitem{Fan2022} D.D. Fan, H.Q. Lin, H.L. Lu, Spectral radius and $[a,b]$-factors in graphs, \emph{Discrete Math.} \textbf{345} (2022) 112892.

\bibitem{Furedi2019} Z. F\"{u}redi, A. Kostochka, R. Luo, A variation of a theorem by P\'{o}sa, \emph{Discrete Math.} \textbf{342} (2019) 1919--1923.

\bibitem{Furedi2018} Z. F\"{u}redi, A. Kostochka, R. Luo, Extensions of a theorem of Erd\H{o}s on nonhamiltonian graphs,
\emph{J. Graph Theory} \textbf{89} (2018) 176--193.

\bibitem{Horn1986} R.A. Horn, C.R. Johnson, Matrix analysis, Cambridge University Press, New York, 1986.

\bibitem{Hong1988} Y. Hong, A bound on the spectral radius of graphs, \emph{Linear Algebra Appl.} \textbf{108} (1988) 135--139.

\bibitem{Hong2001} Y. Hong, J.L. Shu, K.F. Fang, A sharp upper bound of the spectral radius of graphs,
\emph{J. Combin. Theory Ser. B} \textbf{81} (2001) 177--183.

\bibitem{Kaneko2001} A. Kaneko, Spanning trees with constraints on the leaf degree, \emph{Discrete Appl. Math.} \textbf{115} (2001) 73--76.

\bibitem{Kawarabayashi2003} K. Kawarabayashi, A. Nakamoto, K. Ota, Subgraphs of graphs on surfaces with high representativity.
\emph{J. Combin. Theory Ser. B} \textbf{89} (2003) 207--229.

\bibitem{Kano2011}  M. Kano, H. Kishimoto, Spanning $k$-trees of $n$-connected graphs, \emph{Graphs Combin.} \textbf{27} (2011) 413--418.

\bibitem{Kouider2004} M. Kouider, Z. Lonc, Stability number and $[a,b]$-factors in graphs, \emph{J. Graph Theory} \textbf{46} (2004) 254--264.

\bibitem{Katerinis1985} P. Katerinis, Minimum degree of a graph and the existence of $k$-factors,
\emph{Proc. Indian Acad. Sci (Math. Sci.)} \textbf{94} (1985) 123--127.

\bibitem{Li2016} B.L. Li, B. Ning, Spectral analogues of Erd\H{o}s' and Moon-Moser's theorems on Hamilton cycles,
\emph{Linear Algebra Appl.} \textbf{64} (2016) 2252--2269.

\bibitem{Li2003} J.X. Li, On neighborhood condition for graphs to have $[a,b]$-factors, \emph{Discrete Math.} \textbf{260} (2003) 217--221.

\bibitem{Luo2018} R. Luo, The maximum number of cliques in graphs without long cycles,
\emph{J. Combin. Theory Ser. B} \textbf{128} (2018) 219--226.

\bibitem{Li1998} Y.J. Li, M.C. Cai, A degree condition for a graph to have $[a,b]$-factors, \emph{J. Graph Theory} \textbf{27} (1998) 1--6.

\bibitem{Nakamoto2009} A. Nakamoto, Y. Oda, K. Ota, 3-trees with few vertices of degree 3 in circuit graphs,
\emph{Discrete Math.} \textbf{309} (2009) 666--672.

\bibitem{Ning2020} B. Ning, X. Peng, Extensions of the Erd\H{o}s-Gallai theorem and luo's theorem,
\emph{Combin. Probab. Comput.} \textbf{29} (2020) 128--136.

\bibitem{Nikiforov2002} V. Nikiforov, Some inequalities for the largest eigenvalue of a graph,
\emph{Combin. Probab. Comput.} \textbf{11} (2002) 179--189.

\bibitem{Neumann1991} V. Neumann-Lara, E. Rivera-Campo, Spanning trees with bounded degrees, \emph{Combinatorica} \textbf{11} (1991) 55--61.

\bibitem{O2021} S. O, Spectral radius and matchings in graphs, \emph{Linear Algebra Appl.} \textbf{614} (2021) 316--324.

\bibitem{O2022} S. O, Eigenvalues and $[a, b]$-factors in regular graphs, \emph{J. Graph Theory} \textbf{100} (2022) 458--469.

\bibitem{Ozeki2011} K. Ozeki, T. Yamashita, Spanning Trees: A Survey, \emph{Graphs Combin.} \textbf{27} (2011) 1--26.

\bibitem{Thomassen1994} C. Thomassen, Trees in triangulations, \emph{J. Combin. Theory Ser. B} \textbf{60} (1994) 56--62.

\bibitem{Wei2023} J. Wei, S.G. Zhang, Proof of a conjecture on the spectral radius condition for $[a,b]$-factors,
\emph{Discrete Math.} \textbf{346} (2023) 113269.

\bibitem{Win1989} S. Win, On a connection between the existence of $k$-trees and the toughness of a graph,
\emph{Graphs Combin.} \textbf{5} (1989) 201--205.

\bibitem{Xue2022} Y.S. Xue, Y.C. Liu, L.Y. Kang, Stability of generalized Tur\'{a}n number for linear forests, arXiv:2211.07822.
\end{thebibliography}
\end{document}